\newtheorem{theorem}{Theorem}[section]
\newtheorem{lemma}[theorem]{Lemma}
\newtheorem*{lemma*}{Lemma}
\newtheorem{proposition}[theorem]{Proposition}
\newtheorem{corollary}[theorem]{Corollary}
\newtheorem{conjecture}[theorem]{Conjecture}
\theoremstyle{definition}
\newtheorem{definition}[theorem]{Definition}
\theoremstyle{remark}
\newtheorem{remark}[theorem]{Remark}
\numberwithin{equation}{section}
\newcommand{\C}{\mathbb{C}}
\newcommand{\DD}{\mathbb{D}}
\newcommand{\R}{\mathbb{R}}
\newcommand{\Hr}{\mathbb{H}}
\DeclareMathOperator{\re}{Re}
\DeclareMathOperator{\beur}{{\mathbf S}}
\def\Xint#1{\mathchoice
{\XXint\displaystyle\textstyle{#1}}%
{\XXint\textstyle\scriptstyle{#1}}%
{\XXint\scriptstyle\scriptscriptstyle{#1}}%
{\XXint\scriptscriptstyle\scriptscriptstyle{#1}}%
\!\int}
\def\XXint#1#2#3{{\setbox0=\hbox{$#1{#2#3}{\int}$}
\vcenter{\hbox{$#2#3$}}\kern-.5\wd0}}
\newcommand{\real}{{\mathbb R}}
\newcommand{\radial}{{\mathscr A}}
\newcommand{\openset}{{\mathscr O}}
\begin{document}
\baselineskip6mm
\title[Burkholder Integrals and Quasiconformal Mappings]{Burkholder Integrals, Morrey's Problem \\ and Quasiconformal Mappings}

\author{Kari\; Astala, \; Tadeusz\; Iwaniec,  \\ Istv\'an \;Prause, \; Eero \;Saksman}

\address{Department of Mathematics and Statistics,
University of Helsinki, Finland}
\email{kari.astala@helsinki.fi}

\address{Department of Mathematics, Syracuse University, Syracuse,
NY 13244, USA, and Department of Mathematics and Statistics,
University of Helsinki, Finland}
\email{tiwaniec@syr.edu}

\address{Department of Mathematics and Statistics,
University of Helsinki, Finland}
\email{istvan.prause@helsinki.fi}

\address{Department of Mathematics and Statistics,
University of Helsinki, Finland}
\email{eero.saksman@helsinki.fi}

\subjclass[2000]{30C62, 30C70, 49J40}
\date{December 2, 2010}

\keywords{Rank-one-convex and Quasi-convex Variational Integrals, Critical Sobolev Exponents, Extremal Quasiconformal Mappings,  Jacobian Inequalities}
\maketitle
\begin{abstract} Inspired by Morrey's Problem (on rank-one convex functionals) and the Burkholder integrals (of his martingale theory)  we find that the Burkholder functionals $\text{B}_p$, $p \geqslant 2$,  are quasiconcave, when tested on  deformations of identity $f\in Id + {\mathscr C}^\infty_\circ (\Omega )$ 
with $ \text{B}_p\,(Df(x)) \geqslant 0$ pointwise, or  equivalently, 
deformations
 such that $|D f |^2 \leqslant \frac{p}{p-2}J_f $.
 In particular, this holds in  explicit neighbourhoods of the identity map.
 Among the many immediate consequences, this gives
the strongest possible $\,\mathscr L^p$- estimates for the gradient of a principal solution to the Beltrami equation $\, f_{\bar{z}} = \mu(z) f_z\,$,  for any $p$  in the critical interval $2 \leqslant p \leqslant 1+1/\|\mu_f\|_\infty$.
Examples of local maxima lacking symmetry manifest the intricate nature of the problem.
\end{abstract}

\section{Introduction}

A continuous function $\textbf{E} : \mathbb R^{n\times n} \rightarrow \mathbb R\,$  is said to be  
${quasiconvex}$ if for every $\,f \in A + {\mathscr C}^\infty_\circ (\Omega, \mathbb R^n)$ we have
\begin{equation}\label{columbo}
\mathscr E[f] \;:=\; \int_ \Omega \mathbf E(Df)\, \textrm{d}x \geqslant  \int_ \Omega \mathbf E(A)\, \textrm{d}x\; =   \mathbf E(A) |\Omega|,
\end{equation}
where $\,A\,$ stands for an arbitrary  linear mapping (or its matrix) and $\Omega \subset  \mathbb R^n$ is any bounded domain. In other words, one requires that compactly supported  perturbations of linear maps do not
decrease the value of the integral. This notion is of fundamental importance in the
calculus of variations as it is known
to characterize lower semicontinuous integrals \cite{M}. A weaker notion is that of $rank$-$one$ $convexity$, which requires just that  $t\mapsto \textbf{E} (A+tX)  $ is convex for any fixed matrix $A$ and  for any rank one matrix $X.$
Rank-one convexity of an integrand  is a local condition and thus much easier to verify  than quasiconvexity. That quasiconvexity implies rank-one convexity was known  after Morrey's fundamental work in 1950's, but one had to wait until \v{S}ver\'{a}k's paper \cite{Sv} to find out that the converse is not true. 

However, \v{S}ver\'{a}k's example works only in dimensions $n\geqslant 3$, \cite{PS}. This leaves the possibility for different outcome in dimension 2, see \cite{FS}, \cite{Mu2} for evidence in this direction. Morrey himself was not quite definite in which direction he thought things should be true, see \cite{M}, \cite{M2}, and \cite[Sect. 9]{Ball3}. 
We reveal our own thoughts on the matter by recalling the following conjecture  in the spirit of Morrey:
\begin{conjecture}\label{Morrey's}
Continuous rank-one convex functions  $\textbf{E} : \mathbb R^{2\times 2} \rightarrow \mathbb R$ are quasiconvex.
\end{conjecture}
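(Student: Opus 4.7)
The plan is to attack Morrey's conjecture by isolating the Burkholder family $\mathbf B_p$ as the universal test case and then pushing the partial quasiconcavity result of the excerpt beyond its pointwise constraint. Since each $-\mathbf B_p$, $p\ge 2$, is itself rank-one convex, the conjecture demands its quasiconvexity on all deformations of the identity, equivalently the quasiconcavity of $\mathbf B_p$ on all of $\mathrm{id}+\mathscr C^\infty_\circ(\Omega)$ and not merely on the constrained subclass where $\vert Df\vert^2\le \frac{p}{p-2}J_f$.

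\textbf{Universality reduction.} Given a continuous rank-one convex $\mathbf E\colon\mathbb R^{2\times 2}\to\mathbb R$ of polynomial growth, mollification and truncation yield a smooth approximation that remains rank-one convex. Exploiting that both rank-one convexity and quasiconvexity are preserved under pointwise suprema of equi-Lipschitz families, one seeks a representation of $\mathbf E$ as an envelope of shifted Burkholder functionals composed with affine changes of variable on matrix space, modulo null Lagrangians. If this envelope representation succeeds, Morrey's conjecture reduces to quasiconcavity of every single $\mathbf B_p$, $p\ge 2$, on the full test class.

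\textbf{Removing the pointwise constraint.} Fix $p\ge 2$. The natural strategy is to decompose $\Omega=\Omega_+\cup\Omega_-$ by the sign of $\mathbf B_p(Df(x))$, apply the excerpt's theorem on $\Omega_+$ after a cut-off that restores the identity boundary data, and on $\Omega_-$ invoke the conformal involution $\mu\mapsto 1/\bar\mu$ of Beltrami coefficients, which exchanges the two sides of the inequality, together with the sharp $L^p$-bounds for the Beurling-Ahlfors transform on the critical range $2\le p\le 1+1/\Vert\mu_f\Vert_\infty$ advertised in the abstract.

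\textbf{Main obstacle.} Reassembly after this decomposition is precisely where Morrey's problem has resisted attack for over half a century. Splitting $\Omega$ along $\{\mathbf B_p(Df)=0\}$ destroys the affine boundary data on which quasiconvexity globally depends, and the Beltrami duality does not patch naively across the interface. A cleaner alternative, avoiding any cut of $\Omega$, would be a holomorphic deformation: embed $f$ in the family $f_t=\mathrm{id}+t(f-\mathrm{id})$ with $t$ ranging in a complex disk, prove subharmonicity of $t\mapsto\int_\Omega\mathbf B_p(Df_t)\,\mathrm d x$, and reduce to $t=0$ where the integral trivially equals $\mathbf B_p(I)\vert\Omega\vert$. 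Either route needs a genuinely new global idea, and the universality reduction above is itself a substantial open subproblem; the paper's contribution should be read as providing the sharpest available evidence for the conjecture rather than a pathway to its full resolution.
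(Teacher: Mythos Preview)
The statement you are addressing is labelled \emph{Conjecture} in the paper, and the paper does not prove it; it is the famous open Morrey problem in dimension two. There is therefore no ``paper's own proof'' to compare against. What the paper actually establishes is the much more restricted Theorems~1.2 and~1.3: quasiconcavity of $\mathbf B_p$ only on the constrained class where $\mathbf B_p(Df)\geqslant 0$ pointwise, equivalently on $K$-quasiconformal perturbations of the identity with $K\leqslant p/(p-2)$. Your proposal is not a proof of the conjecture either, and to your credit you say so in the final paragraph; but then it should not be presented as a proof proposal at all.

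Each of the three steps you outline has a genuine gap, not merely a missing detail. The ``universality reduction'' asserts that an arbitrary rank-one convex $\mathbf E$ can be written as an envelope of affinely shifted Burkholder functionals modulo null Lagrangians; no such representation theorem is known, and there is no reason to expect the Burkholder family to generate all rank-one convex functions in this way. In the ``removing the pointwise constraint'' step, the involution $\mu\mapsto 1/\bar\mu$ sends Beltrami coefficients with $|\mu|<1$ to coefficients with modulus exceeding $1$, so it does not produce a quasiconformal map and cannot be used as you suggest; and as you yourself note, cutting $\Omega$ along $\{\mathbf B_p(Df)=0\}$ destroys the affine boundary data that the definition of quasiconvexity requires. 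Finally, for the holomorphic-deformation alternative, $t\mapsto \int_\Omega \mathbf B_p(Df_t)\,\mathrm dx$ with $f_t=\mathrm{id}+t(f-\mathrm{id})$ is not subharmonic in general: $\mathbf B_p$ is rank-one concave but certainly not concave, and the family $Df_t$ moves along full-rank directions, so no plurisubharmonicity is available. In short, the proposal correctly identifies the conjecture as open and the paper's results as partial evidence, but none of the sketched routes constitutes an approach with a plausible line of completion.
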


One says that
$\textbf{E}$ is rank-one concave (resp. quasiconcave) if $-\textbf{E}$ is  rank-one convex (resp. quasiconvex), and null-Lagrangian if both quasiconvex and quasiconcave. In the sequel we will rather discuss concavity, as this turns out to be  natural for our methods.
The most famous (and, arguably, the most important) rank-one concave function in two dimension is the Burkholder functional from \cite{Bu1},  defined for any $2\times 2$ matrix $A$ 
by
\begin{equation}
\label{burk}
 \textbf{B}_p\,(A) = \,\Big(\; \frac{p}{2}\det A  \; +\; (1-\frac{p}{2})\, \big{|}A\big{|}^2 \; \Big)\cdot |A|^{p-2},\qquad  p\geqslant {2}.
\end{equation}
Above, we have chosen the normalization $ \textbf{B}_p\,(Id\,) = 1$ with the identity matrix and the absolute value notation is reserved for the operator norm.
It is well known that among the wealth of important results the quasiconcavity of the Burkholder functional would  imply e.g.  the famous Iwaniec conjecture on the $p$-norm of the Beurling-Ahlfors operator.

The purpose of the present paper is to validate Conjecture \ref{Morrey's} in an important special case, namely for the above Burkholder functional, in case of  non-negative integrands, and for perturbations of the identity map:
\begin{theorem}\label{MainTh3} Let $\Omega\subset\real^2$ be a bounded domain and denote by $Id:\Omega\to\real^2$ the identity map. Assume that $f \in Id + {\mathscr C}^\infty_\circ (\Omega )$
satisfies  $\textbf{B}_p\,\bigl(Df(x)\bigr)\geqslant 0$  pointwise in $\Omega$. Then
$$
\int_\Omega \textbf{B}_p\,(Df)\, dx\, \leqslant  \int _\Omega \textbf{B}_p\,(Id\,)\, dx =|\Omega |, \qquad  p\geqslant {2},
$$
or, written explicitly
\begin{equation}\label{uusi22}
\int_\Omega  \,\Big(\; \frac{p}{2}\; J(z,f)   + ( 1 - \frac{p}{2})\; |Df|^2 \; \Big)\cdot |Df|^{p-2} \,  \leqslant \;
 |\Omega |.
\end{equation}
\end{theorem}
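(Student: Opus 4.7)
The plan is to translate the pointwise Burkholder constraint into the language of quasiconformal maps, then exploit holomorphic dependence on the Beltrami coefficient.

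First, writing a real $2\times 2$ matrix as the $\real$-linear map $\xi\mapsto\alpha\xi+\beta\bar\xi$ gives $|A|=|\alpha|+|\beta|$ and $\det A=|\alpha|^2-|\beta|^2$. A direct algebraic manipulation then yields the Wirtinger factorization
$$\textbf{B}_p(A)\;=\;(|\alpha|+|\beta|)^{p-1}\bigl(|\alpha|-(p-1)|\beta|\bigr).$$
Applied to $A=Df$ with $\alpha=f_z$, $\beta=f_{\bar z}$, the hypothesis $\textbf{B}_p(Df)\ge 0$ becomes exactly $|f_{\bar z}|\le\tfrac{1}{p-1}|f_z|$, i.e.\ $\|\mu_f\|_\infty\le 1/(p-1)$ for the Beltrami coefficient $\mu_f=f_{\bar z}/f_z$. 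Thus $f$ is $K$-quasiregular with $K=p/(p-2)$.

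Second, since $f-\id$ is smooth and compactly supported, extending by $\id$ outside $\Omega$ makes $f$ a globally smooth $K$-quasiregular map of $\C$ equal to the identity at infinity. A degree-one argument promotes $f$ to a $K$-quasiconformal homeomorphism of $\C$, i.e.\ the principal solution of $f_{\bar z}=\mu f_z$ for some $\mu\in{\mathscr C}^\infty_\circ(\Omega)$ with $\|\mu\|_\infty\le 1/(p-1)$. This opens the door to the Ahlfors--Bers holomorphic motions: for $\lambda$ in the disc $\{|\lambda|<p-1\}$ let $f^\lambda$ denote the principal solution of $(f^\lambda)_{\bar z}=\lambda\mu\,(f^\lambda)_z$; these form a holomorphic family with $f^0=\id$ and $f^1=f$, and $\lambda\mapsto (f^\lambda)_z$ depends holomorphically through the Neumann series in the Beurling transform $S$.

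The central step is to show that $F(\lambda):=\int_\Omega\textbf{B}_p(Df^\lambda)\,dx$ is dominated on the disc by its value at the origin, $F(0)=\int_\Omega\textbf{B}_p(\id)\,dx=|\Omega|$. My strategy is to establish a subharmonicity or mean-value property for $F$ (or for a closely-related functional obtained by polarization, exploiting the holomorphic dependence) and combine it with the pointwise upper bound $\textbf{B}_p(A)\le|\alpha|^p-(p-1)|\beta|^p$ coming from the Wirtinger form --- easily verified by checking that the difference is monotone increasing in $|\beta|$ and vanishes at $\beta=0$. Integration by parts against the $\bar\partial$-equation, together with the compactness of $\mathrm{supp}\,\mu$, should then eliminate the divergence contributions and pin down $F(1)\le|\Omega|$.

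The main obstacle is handling the critical parameter $\lambda=1$, where the Beltrami coefficient saturates its norm bound and the $L^p$-theory of the Beurling transform is known only conditionally via Iwaniec's conjecture. The Wirtinger factorization above reduces the theorem to a sharp inequality for $\int_\Omega(|f_z|^p-(p-1)|f_{\bar z}|^p)\,dx$, essentially a sharp Burkholder-type estimate at the critical exponent $p=1+1/\|\mu_f\|_\infty$. Circumventing this obstacle without invoking generic Calder\'on--Zygmund bounds will require using that $\mathrm{supp}\,\mu$ is compactly contained in $\Omega$ and the specific $\bar\partial$-structure of principal solutions --- likely the exact identity $f_z=1+S(\mu f_z)$ --- in a delicate algebraic rearrangement that yields the inequality with equality at $\mu=0$.
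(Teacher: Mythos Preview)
Your setup is right: the Wirtinger factorization, the reduction to $K$-quasiconformality with $K=p/(p-2)$, and the passage to a principal solution depending holomorphically on a parameter are exactly where the paper begins. The gap is in the ``central step.''

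The functional $F(\lambda)=\int_\Omega\textbf{B}_p(Df^\lambda)$ is not subharmonic in $\lambda$ for the naive family $\mu_\lambda=\lambda\mu$, and no polarization repairs this; the integrand involves $|f^\lambda_z|$, $|f^\lambda_{\bar z}|$ in a way that does not admit a useful plurisubharmonic majorant. Your fallback, the pointwise bound $\textbf{B}_p(A)\le|\alpha|^p-(p-1)|\beta|^p$, is correct but reduces the problem to $\int_\Omega\bigl(|f_z|^p-(p-1)|f_{\bar z}|^p\bigr)\le|\Omega|$, which is an affine form of the very Burkholder/Iwaniec estimate one is trying to avoid; there is no known ``delicate algebraic rearrangement'' of $f_z=1+S(\mu f_z)$ that yields it.

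What the paper does instead involves two ideas you are missing. First, the analytic family is \emph{not} $\lambda\mu$: one takes $\mu_\lambda(z)=\tau_\lambda(z)\,\mu(z)/|\mu(z)|$ where $\tau_\lambda$ is determined by
\[
\frac{\tau_\lambda(z)}{1+\tau_\lambda(z)}\;=\;p\cdot\frac{|\mu(z)|}{1+|\mu(z)|}\cdot\frac{\lambda}{1+\lambda},
\]
a position-dependent, nonlinear reparametrization chosen so that $\Phi_\lambda:=(1+\tau_\lambda)F^\lambda_z$ satisfies $|\Phi_\lambda|^2\,d\sigma\le J(z,F^\lambda)\,dz/\pi$ for a fixed weight $d\sigma$; the Area Theorem then gives the uniform bound $\|\Phi_\lambda\|_{L^2(\sigma)}\le 1$ for all $|\lambda|<1$. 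Second, one invokes a new interpolation lemma (a Riesz--Thorin variant for \emph{nonvanishing} analytic families) that requires the $L^{p_0}$ bound only at the single point $\lambda=0$, where $\Phi_0\equiv 1$, together with the uniform $L^2$ bound just obtained. Interpolating to $|\lambda|=1/(p-1)$ gives exactly the Burkholder inequality. The nonvanishing of $\Phi_\lambda$ (hence of $F^\lambda_z$) is essential---without it the interpolation fails---and is what singles out quasiconformal deformations.
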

\medskip

Our proof of the above result  is based on holomorphic deformations and quasiconformal methods, and we next explain some of the relations between the Burkholder integrals and these maps. The reader is referred  to Section \ref{se:proof} for the needed notation. 
One observes that the condition $\textbf{B}_p\,(Df)\geqslant 0$ is  equivalent to
$|D f |^2 \leqslant \frac{p}{p-2}J_f $, which actually amounts to quasiconformality of $f$. 
In this setting our result reads as follows:

\begin{theorem}\label{MainTh2}
Let $\,f : \,\Omega{\longrightarrow}\,\Omega\,$ be a $\,K$-quasiconformal map of a bounded open set $\,\Omega \subset \mathbb C\,$  onto itself, extending continuously up to the boundary, where it coincides with the identity map $\,Id(z) \equiv z$.  Then
$$
\int_\Omega \textbf{B}_p\,(Df)\, dx\, \leqslant  \int _\Omega \textbf{B}_p\,(Id\,)\, dx \,=\, |\Omega |,
\quad\,  \textnormal{\textit{for all }} \;\, 2 \leqslant p \leqslant \frac{2K}{K-1}\,.
$$ 
Further, the equality occurs  for a class of (expanding) piecewise radial mappings discussed in Section \ref{se:radial}.
\end{theorem}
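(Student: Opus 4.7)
The plan is to reduce Theorem \ref{MainTh2} to Theorem \ref{MainTh3} by a suitable smooth approximation. Extending $f$ by the identity to $\mathbb C$ yields a $K$-quasiconformal self-map of the plane with Beltrami coefficient $\mu$ supported in $\Omega$ and $\|\mu\|_\infty\le(K-1)/(K+1)$, and the condition $p \le 2K/(K-1)$ is precisely what ensures $\textbf{B}_p(Df)\ge 0$. The only discrepancy with the hypothesis of Theorem \ref{MainTh3} is regularity: $f - Id$ lies in $W^{1,q}_0(\Omega)$ only for $q<2K/(K-1)$ (by Astala's higher integrability) rather than in ${\mathscr C}^\infty_\circ(\Omega)$.

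To construct smooth $K$-quasiconformal approximants $f_n \in Id + {\mathscr C}^\infty_\circ(\Omega)$, fix an exhaustion of $\Omega$ by smoothly bounded Jordan subdomains $\Omega_n \subset\subset \Omega$ with $\Omega_n \nearrow \Omega$, and smooth Beltrami coefficients $\mu_n$ compactly supported in $\Omega_n$, vanishing on a neighborhood of $\partial\Omega_n$, with $\|\mu_n\|_\infty \le \|\mu\|_\infty$ and $\mu_n\to\mu$ in every $L^q(\Omega)$. Solve the Beltrami equation $\bar\partial g_n = \mu_n \partial g_n$ on $\Omega_n$ with boundary condition $g_n|_{\partial\Omega_n} = Id$; this gives a smooth $K$-quasiconformal self-map. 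The crucial observation is that $g_n$ is holomorphic on the annular collar of $\partial\Omega_n$ where $\mu_n$ vanishes, and $g_n - Id$ is continuous up to $\partial\Omega_n$ and vanishes on this smooth curve, so Schwarz reflection together with the identity theorem forces $g_n \equiv Id$ on the collar. Extending by the identity to the complement therefore produces the desired $f_n \in Id + {\mathscr C}^\infty_\circ(\Omega)$.

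Apply Theorem \ref{MainTh3} to each $f_n$ to obtain $\int_\Omega \textbf{B}_p(Df_n)\,dx \le |\Omega|$. Continuous dependence of quasiconformal solutions on the Beltrami coefficient, combined with the critical $L^p$-theory, gives $Df_n \to Df$ in $L^q(\Omega)$ for every $q < 2K/(K-1)$. For $p$ strictly below the critical exponent, pick $q>p$ in this range; the growth $|\textbf{B}_p(A)| \le (p-1)|A|^p$ and Vitali's theorem then pass the inequality to the limit. For the endpoint $p = 2K/(K-1)$, take $p'\nearrow p$ and apply Fatou's lemma, legitimate because $\textbf{B}_{p'}(Df)\ge 0$ throughout the range.

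The main obstacle is the construction of the $f_n$: producing genuine compactly supported smooth perturbations of the identity that remain $K$-quasiconformal. The device of making $\mu_n$ vanish near $\partial\Omega_n$ and invoking Schwarz reflection (which relies on smoothness of the $\Omega_n$) is the key ingredient, since naive mollification of $f$ destroys the pointwise quasiconformal cone constraint. The equality statement is verified separately using the piecewise radial extremizers introduced in the next section.
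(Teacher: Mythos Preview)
Your reduction runs in the opposite direction from the paper's: there Theorem~\ref{MainTh2} is deduced directly from the principal-solution estimate (Theorem~\ref{MainTh}), by extending $f$ as the identity to a large disk $D_R\supset\Omega$, applying the rescaled inequality~(\ref{Mainineq}) on $D_R$, and subtracting the trivial contribution on $D_R\setminus\Omega$. Theorem~\ref{MainTh3} is then a corollary of Theorem~\ref{MainTh2}, not the other way round.

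Your proposed route has a genuine gap at the construction of the approximants $g_n$. The Beltrami equation with a \emph{prescribed} dilatation $\mu_n$ does not admit a solution with \emph{prescribed} boundary values: homeomorphic solutions on $\Omega_n$ with dilatation $\mu_n$ differ only by post-composition with a conformal map, so you may fix at most three boundary points, not the entire boundary correspondence. In fact your own Schwarz-reflection argument exposes the inconsistency. If such a $g_n$ existed, you show $g_n\equiv Id$ on the collar; extending by the identity to all of $\mathbb C$ then produces a $\mathscr W^{1,2}_{\mathrm{loc}}$ homeomorphism with dilatation $\mu_n$ (extended by zero) and the asymptotics $g_n(z)=z$ near $\infty$, i.e.\ the principal solution. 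But the principal solution for a nontrivial compactly supported $\mu_n$ is merely \emph{conformal}, not the identity, outside $\mathrm{supp}\,\mu_n$ (cf.\ Lemma~\ref{lem:areaineq} and the expansion~(\ref{principal})). Hence no $g_n$ with both the prescribed $\mu_n$ and $g_n|_{\partial\Omega_n}=Id$ exists unless $\mu_n\equiv 0$.

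The correct smooth approximation in this circle of ideas stays at the level of principal solutions (mollify $\mu$, keep $\|\mu_n\|_\infty\le k$, take principal solutions $f_n$; these converge in $\mathscr W^{1,2}_{\mathrm{loc}}$ but are \emph{not} identity on $\partial\Omega$), which is exactly why the paper works with Theorem~\ref{MainTh} first and only afterwards localises to $\Omega$.
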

\noindent 
This result says, roughly, that the Burkholder functional is quasiconcave within  quasiconformal perturbations of the identity.
It is quite interesting that indeed  there is an equality in the above theorem for a large class of radial-like maps. When smooth and $p< 2K/(K-1)$ these are all local maxima for the functional, see Corollary \ref{locmaxima} for details. In particular, the identity map is a local maximum of all the Burkholder functionals in \eqref{burk}.

From the point of view of the theory of nonlinear hyperelasticity of
John Ball  \cite{Ball1,Ball2} and his collaborators \cite{An,C}, for homogeneous materials the elastic deformations  $\,f\colon  \Omega \rightarrow \mathbb R^n\;$ 
are  minimizers of  a given  energy integral
\begin{equation}
\mathscr E[f] \;=\; \int_ \Omega \mathbf E(Df)\, \textrm{d}x  \; <\infty 
\end{equation}
where the so-called \textit{stored energy function} $\, \mathbf E\colon  \mathbb R^{n\times n}\rightarrow \mathbb R\,$ carries the mechanical properties of the elastic material in $\,\Omega\,$. 
By virtue of the principle of non-interpenetration of matter the minimizers ought to be injective. It is from these perspectives that our energy-estimates, although limited to (quasiconformal) homeomorphisms, are   certainly not short of applications.

 Among the strong  consequences of the theorem,  one obtains (with the same assumptions as in Theorem \ref{MainTh2}) that
\begin{equation}\label{Lp}
\;\;\frac{1}{|\Omega|}\int_{\Omega}  \big | Df(z)\big|^{\,p }\;\textnormal d z
\;\leqslant \; \frac{2K}{ 2K \;-\; p\,(K-1)}\;,\quad for\;\; 2 \leqslant p < \frac{2K}{K-1}\,
\end{equation}\\
 with equality  for piecewise power mappings, such as $\, f(z) =  |z|^{1-1/K} z \,$ in the unit disk, see Corollary \ref{best} below. 
The  ${\mathscr W}^{1,p}$-regularity of $K-$quasiconformal mappings, for $p < 2K/(K-1)$, was established  by the first author in \cite{As2}, as a corollary of his area distortion theorem. However, there  the  bounds for integrals such as in (\ref{Lp}) were described  in terms of unspecified constants depending on the distortion $K$. Here we have obtained the sharp explicit bound for the ${\mathscr L}^p$-integrals of the derivatives of $K$-quasiconformal mappings.

Similarly, for any $K$-quasiregular mapping $f \in {\mathscr W}^{1,2}_{loc}(\Omega)$, injective or not, we can improve the local ${\mathscr W}^{1,p}$-regularity to weighted integral bounds at the borderline exponent $p=2K/(K-1)$,
\begin{equation}
\label{ }
\left( \frac{1}{K(x)}\; -\;\frac{1}{K} \right)\;\big| D\!f(x)\big| ^{\frac{2K}{K-1}} \in \,\mathscr L^1_{loc}(\Omega).
\end{equation}
We refer  to Section \ref{se:proof} for a more thorough discussion and Section \ref{se:radial} for elaborate examples of extremal mappings.

We next describe shortly the ideas behind the proofs of our main results which  are given in Section \ref{se:proof}. In fact we will prove a slightly generalized form of  Theorems  \ref{MainTh3} -- \ref{MainTh2}, where we relax the identity boundary conditions to asymptotic normalization at infinity.
This is done in Theorem \ref{MainTh} below, where we will interpolate between the natural end-point cases $p=2$ and $p=\infty$.
The holomorphic interpolation method used is inspired by the variational principle of thermodynamical formalism and the underlying analytic dependence coming from holomorphic motions. The latter tools already figured prominently in the proof of the area distortion theorem \cite{As2} by the first author.  

Here these are developed to a key ingredient of our argument,
a new variant of the celebrated  Riesz-Thorin interpolation theorem. We believe that the usefulness of this result may go beyond our interests here.  
In order to describe this result,
let $\,(\Omega,\,\sigma)\,$ be a measure space and let $\mathscr M(\Omega,\,\sigma) \,$ denote the class of complex-valued $\,\sigma$-measurable functions on $\Omega$. The Lebesgue spaces $\,\mathscr L^p (\Omega,\,\sigma)\,$ are  (quasi-)normed by
$$
  \| \Phi\|_p  = \left(\int_\Omega |\Phi(z)|^p \;\textrm{d}\sigma(z)\right)^{\frac{1}{p}}\,,\;\;\;0 < p<\infty\;, \;\;\; \textrm{and}\;\;\|\Phi\|_\infty \;= \underset{z\in\,\Omega}{\textrm{ess\,sup}} \,\;|\Phi(z)|
$$
Let $U\subset\C$ be a domain. We shall consider  analytic families $f_\lambda$ of measurable functions in $\Omega ,$  i.e.  jointly measurable functions $(x,\lambda)\mapsto f_\lambda (x)$ defined on
$\Omega\times U$ such that for each fixed $x\in \Omega$ the map $\lambda \to f(x,\lambda)$ is analytic in
$U$. The family is said to be {\it non-vanishing} if there exists a set $E\subset\Omega$ of $\sigma$-measure zero such 
\begin{equation}\label{vanishing}
f(x,\lambda)\not=0\qquad {\rm for \; all}\;\; x\in \Omega\setminus E\;\;{\rm and}\;\; \lambda\in U.
\end{equation}
We state our interpolation result first in the setting of the right half plane,
$U =  \mathbb H_+ := \{ \lambda: \; \re\lambda \,>\, 0\} $, in order to facilitate comparison with the Riesz-Thorin theorem:
\begin{lemma}[Interpolation Lemma]\label{interpolation2}
Let $\;0 \,<\, p_0, \, p_1 \leqslant \infty$ and
let   $\;\{\Phi_{\lambda}\,;\; \lambda \in  \mathbb H_+\}\, \subset\,\mathscr M(\Omega, \sigma)\,$ be an analytic and non-vanishing family,  with complex parameter $\lambda$ in the right half plane.
Assume further that for some $a\geqslant 0$,
$$
M_1\;:=\; \|\,\Phi\,_1\|_{p_1} \;\;<\;\infty  \;\; \textrm{and }\; M_0\;:=\; \underset{\lambda \in  \mathbb H_+}{\sup}
\,e^{- a\re \lambda} \|\,\Phi_\lambda\|_{p_0} \;<\infty. 
$$
Then,  letting  $\; M_\theta \;:=\; \big{\|}\Phi_\theta\big{\|}_{p_\theta}\;$ with \quad $\displaystyle
 \frac{1}{p_\theta}=(1-\theta )\cdot \frac{1}{p_0} + \theta\cdot \frac{1}{p_1},$\\
we have  for every $0<\theta< 1\,$,
\begin{equation} \label{logconcav}
M_\theta \leqslant  \;M_0^{1-\theta}\cdot M_1^{\theta}\;<\,\infty \;
\end{equation}
\end{lemma}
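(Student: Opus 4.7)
The plan is to run a Riesz--Thorin/Stein-type duality construction on the half-plane and then extract the interpolated norm bound through a Poisson-kernel comparison on $\mathbb H_+$, the key being that the non-vanishing hypothesis turns $\log|\Phi_\lambda(x)|$ into a \emph{harmonic} (not just subharmonic) function of $\lambda$ for each $x$.

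First I would reduce to the case $a=0$. The substitution $\Phi_\lambda(x)\mapsto e^{-a\lambda}\Phi_\lambda(x)$ preserves analyticity and non-vanishing, converts the hypothesis into $\sup_{\lambda\in\mathbb H_+}\|\Phi_\lambda\|_{p_0}\leqslant M_0$, replaces $M_1$ by $e^{-a}M_1$, and leaves the target inequality $M_\theta\leqslant M_0^{1-\theta}M_1^\theta$ invariant. Next I would set up the Stein auxiliary function: given $\varepsilon>0$, pick a simple $g\in\mathscr L^{p_\theta'}$ with $\|g\|_{p_\theta'}=1$ and $\int \Phi_\theta g\,d\sigma\geqslant M_\theta-\varepsilon$, and extend it to the analytic family
$$g_\lambda(x)\;=\;|g(x)|^{\alpha(\lambda)}\,\sign g(x),\qquad \alpha(\lambda)\;=\;p_\theta'\!\left[\frac{1-\lambda}{p_0'}+\frac{\lambda}{p_1'}\right],$$
chosen so that $g_\theta=g$ and $\|g_{it}\|_{p_0'}\equiv 1=\|g_1\|_{p_1'}$. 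Then $F(\lambda):=\int \Phi_\lambda g_\lambda\,d\sigma$ is analytic on $\mathbb H_+$, bounded (because $g$ is simple and $\|\Phi_\lambda\|_{p_0}\leqslant M_0$), and H\"older gives $|F(it)|\leqslant M_0$, $|F(1)|\leqslant M_1$, $F(\theta)\geqslant M_\theta-\varepsilon$.

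The crucial step is then to show $|F(\theta)|\leqslant M_0^{1-\theta}M_1^\theta$. I would use the elementary identity for the Poisson kernel $P_\lambda(t)=\re\lambda/[\pi((\re\lambda)^2+(t-\im\lambda)^2)]$ of $\mathbb H_+$:
$$P_\theta(t)-\theta\,P_1(t)\;=\;\frac{\theta(1-\theta^2)}{\pi(\theta^2+t^2)(1+t^2)}\;\geqslant\;0,\qquad \theta\in(0,1),\ t\in\mathbb R,$$
combined with $\int P_\theta=\int P_1=1$. Non-vanishing of $\Phi_\lambda$ makes $\log|\Phi_\lambda(x)|$ harmonic in $\lambda$ for each $x$, which (after handling potential zeros of $F$, see below) transfers to a Poisson representation of $\log|F|$. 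Splitting
$$\log|F(\theta)|\;\leqslant\;\int\!\bigl(P_\theta-\theta P_1\bigr)(t)\log|F(it)|\,dt\;+\;\theta\!\int\! P_1(t)\log|F(it)|\,dt,$$
the first summand is bounded by $(1-\theta)\log M_0$ (since $|F(it)|\leqslant M_0$ and the total mass equals $1-\theta$), and the second by $\theta\log|F(1)|\leqslant\theta\log M_1$. Letting $\varepsilon\to 0$ yields the lemma.

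The main obstacle is justifying the last display rigorously: $F$ may have zeros in $\mathbb H_+$, in which case $\log|F|$ is only subharmonic and the equality $\log|F(1)|=\int P_1\log|F(it)|\,dt$ degenerates into a \emph{wrong-way} inequality that cannot digest the hypothesis $|F(1)|\leqslant M_1$. I would circumvent this either by peeling off a Blaschke-type factor adapted to $\mathbb H_+$ (unimodular on $i\mathbb R$, so it does not spoil the boundary bound) or, more intrinsically in the spirit of the paper, by applying the Poisson comparison pointwise in $x$ to the genuinely harmonic functions $\log|\Phi_\lambda(x)|$ and integrating afterwards through H\"older/log-convexity of $L^p$ norms. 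The global $L^{p_0}$ bound on all of $\mathbb H_+$, which the hypothesis delivers after the $a=0$ reduction, is precisely what provides the admissible growth at infinity needed for the Poisson representation to be valid.
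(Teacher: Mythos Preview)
Your reduction to $a=0$ is correct and matches the paper. The main line, however, has a gap that neither proposed fix repairs. After dualizing you need $|F(\theta)|\leqslant M_0^{1-\theta}M_1^\theta$ for an analytic $F$ on $\mathbb H_+$ with $|F|\leqslant M_0$ throughout and the \emph{single interior} constraint $|F(1)|\leqslant M_1$. This implication is simply false: the Blaschke factor $F(\lambda)=(\lambda-1)/(\lambda+1)$ has $|F|\leqslant 1$ on $\mathbb H_+$ and $F(1)=0$, yet $F(\theta)\ne 0$ for $\theta\in(0,1)$. Peeling off Blaschke factors does not help, because the constraint sits at an interior point: writing $F=BG$ with $G$ zero-free gives $|G(1)|=|F(1)|/|B(1)|\geqslant |F(1)|$, so the bound $|G(1)|\leqslant M_1$ is lost, and your Poisson identity at $\lambda=1$ remains unavailable. (A secondary issue: the duality construction needs $p_0,p_1\geqslant 1$, whereas the lemma allows all $0<p_0,p_1\leqslant\infty$.)

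Your fallback (b) points in the right direction but ``integrate afterwards through H\"older/log-convexity'' is not yet a proof; the missing idea is \emph{which} average of $\log|\Phi_\lambda|$ to take. The paper avoids duality entirely. Fixing $\theta$, it introduces the Gibbs-type density $\wp(x)=|\Phi_\theta(x)|^{p_\theta}\big/\|\Phi_\theta\|_{p_\theta}^{p_\theta}$ and sets $u_\infty(\lambda)=\int_\Omega\wp\,\log|\Phi_\lambda|\,d\sigma$, which is harmonic in $\lambda$ precisely by the non-vanishing hypothesis. With $I=-\int\wp\log\wp$, Jensen's inequality gives the envelope property $u_p(\lambda):=I/p+u_\infty(\lambda)\leqslant\log\|\Phi_\lambda\|_p$ for \emph{every} $p$ and $\lambda$, with equality at $(p,\lambda)=(p_\theta,\theta)$. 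In particular $u_{p_0}$ is harmonic and nonpositive on $\mathbb H_+$ (after normalizing $M_0=1$), so the half-plane Harnack inequality $u_{p_0}(\theta)\leqslant\theta\,u_{p_0}(1)$ applies directly --- no Poisson representation, no boundary values, no zeros to remove. A one-line manipulation with the slope $I$ then converts this into $\log\|\Phi_\theta\|_{p_\theta}=u_{p_\theta}(\theta)\leqslant\theta\,u_{p_1}(1)\leqslant\theta\log M_1$. The point is that the harmonic object to which Harnack is applied is an \emph{integral} of $\log|\Phi_\lambda|$ against a probability measure, not the log-modulus of a pairing; there is no scalar analytic function $F$ in sight, hence no Blaschke obstruction.
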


\begin{remark} Compared to Riesz-Thorin, our result needs the bound for the other end-point exponent only at one single point, when $\lambda = 1$ !  However,  without  the non-vanishing condition the  conclusion of the interpolation lemma breaks down drastically. A simple example (where $a=0$) is obtained by taking $p_0=1$, $p_1=\infty$,  and considering the
family $f(x,\lambda )=\left(\frac{1-\lambda}{1+\lambda}\right)g(x)$ for $\re\lambda >0$ and $x\in\Omega ,$
for a function  $g\in \mathscr L^1 (\Omega,\,\sigma)\setminus\left(\bigcup_{p>1} \mathscr L^p (\Omega,\,\sigma)\right).$
\end{remark}
In applications one often has rotational symmetry, thus requiring a unit disk version of the interpolation. After a  M\"obius transform in the parameter plane the interpolation lemma runs as follows (observe that we have interchanged the  roles of the indices $p_0$ and $p_1$ only for aesthetic reasons): 

\begin{lemma}[Interpolation Lemma for the disk]\label{interpolation}
Let $0<p_0 ,p_1 \leqslant \infty$ and
$\;\{\Phi_{\lambda}\,;\; |\lambda| \,<\, 1\}\, \subset\,\mathscr M(\Omega, \sigma)\,$ be an analytic and non-vanishing family  with complex parameter $\lambda$ in the unit disc.
Suppose
$$
M_0\,:=\, \|\,\Phi\,_0\|_{p_0} \;<\infty,\;\;\;  M_1\,:=\, \underset{|\lambda| <1}{\sup} \|\,\Phi_\lambda\|_{p_1} \;<\infty \;\;\; \textrm{and }\; M_r \,:=\, \underset{|\lambda|=r}{\sup}\,\big{\|}\Phi_\lambda\big{\|}_{p_r},
$$
where
\[ 
 \frac{1}{p_r}=\frac{1-r}{1+r}\cdot \frac{1}{p_0} + \frac{2r}{1+r}\cdot \frac{1}{p_1}\\
\]
\vskip8pt

\noindent Then, for every $\, 0\leqslant r < 1\,$,  we have
\begin{equation}
M_r \leqslant  \;M_0^{\frac{1-r}{1+r}}\cdot M_1^{\frac{2\,r}{1+r}}\;<\,\infty \;
\end{equation}
\end{lemma}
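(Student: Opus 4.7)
The plan is to reduce Lemma~\ref{interpolation} to the half-plane version Lemma~\ref{interpolation2} via a M\"obius change of parameter, and then invoke rotational covariance to upgrade a bound on the real segment $(0,1)$ to the full circle $|\lambda|=r$.

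First, I transport the family. Consider the biholomorphism $\psi:\mathbb H_+\to\DD$ given by $\psi(w)=(1-w)/(1+w)$, which sends $w=1$ to the origin and maps the positive real axis onto $(-1,1)$, and set $\tilde\Phi_w:=\Phi_{\psi(w)}$ for $w\in\mathbb H_+$. Since $\psi$ is holomorphic and the original family is analytic and non-vanishing on $\DD$, the pulled-back family $\tilde\Phi$ is analytic and non-vanishing on $\mathbb H_+$. In the new parameter the hypotheses of Lemma~\ref{interpolation} translate to
$$\|\tilde\Phi_1\|_{p_0}\;=\;\|\Phi_0\|_{p_0}\;=\;M_0\;<\;\infty \quad\text{and}\quad \sup_{w\in\mathbb H_+}\|\tilde\Phi_w\|_{p_1}\;\le\;M_1\;<\;\infty,$$
which are exactly the hypotheses of Lemma~\ref{interpolation2} with $a=0$, once we swap the labels $p_0\leftrightarrow p_1$ (the single-point bound now lives at index $p_0$ and the global bound at index $p_1$).

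Next, I apply the half-plane lemma. For $\theta\in(0,1)$ it yields $\|\tilde\Phi_\theta\|_{q_\theta}\le M_1^{1-\theta}M_0^{\theta}$, with $1/q_\theta=(1-\theta)/p_1+\theta/p_0$. Setting $r:=\psi(\theta)=(1-\theta)/(1+\theta)\in(0,1)$, an elementary calculation gives $\theta=(1-r)/(1+r)$ and $1-\theta=2r/(1+r)$, so $q_\theta$ coincides with the advertised exponent $p_r$ and the estimate becomes
$$\|\Phi_r\|_{p_r}\;\le\; M_0^{(1-r)/(1+r)}\,M_1^{2r/(1+r)}.$$

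Finally, for an arbitrary $\lambda_0=re^{i\alpha}$ on the circle $|\lambda|=r$, I apply the preceding step to the rotated family $\lambda\mapsto\Phi_{e^{i\alpha}\lambda}$, $|\lambda|<1$, which is again analytic and non-vanishing and satisfies the hypotheses of Lemma~\ref{interpolation} with the identical constants $M_0$ (the center $\lambda=0$ is fixed under rotation) and $M_1$ (the disk is rotation-invariant). This delivers the bound at $\lambda_0$, and taking the supremum over $\alpha$ produces $M_r$ on the left. The whole reduction is essentially a change-of-variable bookkeeping exercise; all the genuine analytic content sits in Lemma~\ref{interpolation2}, and I foresee no serious obstacle in this disk-version derivation itself.
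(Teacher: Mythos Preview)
Your proposal is correct and follows essentially the same route as the paper: the paper deduces Lemma~\ref{interpolation} from Lemma~\ref{interpolation2} via the M\"obius change of variables $\lambda=(1-w)/(1+w)$ to obtain the bound for real $\lambda\in(0,1)$, and then invokes rotational symmetry for general $|\lambda|=r$. Your bookkeeping of the swapped exponent labels and the computation $\theta=(1-r)/(1+r)$, $1-\theta=2r/(1+r)$ matches exactly what is needed.
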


As might be expected, passing to the limit in (\ref{uusi22}) as $\,p\to 2\,$  or as $\,p\to\infty\,$ will yield interesting sharp inequalities.  The first mentioned limit leads to

\begin{corollary}\label{LlogL Th}  Given a bounded domain  $\Omega\subset\R^2$ and a homeomorphism  $\,f\, : \,\Omega \overset{\textnormal{\tiny{onto}}}{\longrightarrow}\,\Omega\,$ such that
  $$f(z) -  z \in {\mathscr W}^{1,2}_0(\Omega),$$ 
  we  then have
\begin{equation}\label{LlogL1} \int _\Omega \,\left(1\, + \,\log |Df(z)|^2 \,\right )\, J(z,f)\,\textnormal d z  \;\;\leqslant \;\;\int _\Omega  |Df(z)|^2  \,\textnormal d z
\end{equation}
Equality occurs for the identity map, as well as for a number of piece-wise radial mappings discussed in Section \ref{se:radial}.
\end{corollary}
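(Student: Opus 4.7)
The plan is to read (\ref{LlogL1}) as the infinitesimal version of the Burkholder estimate (\ref{uusi22}) at the endpoint $p=2$. Introduce
$$
\psi(p)\;:=\;\int_\Omega \mathbf{B}_p(Df)\,dz.
$$
Since $f\colon\Omega\to\Omega$ is a Sobolev homeomorphism with $f-\mathrm{id}\in\mathscr W^{1,2}_0$, the Lusin $(N)$ property combined with the change of variables formula gives $\int_\Omega J(z,f)\,dz = |f(\Omega)| = |\Omega|$, so $\psi(2) = |\Omega|$. As $p\downarrow 2$ the quasiconformality restriction $|Df|^2\le \tfrac{p}{p-2}J$ becomes vacuous, so Theorem \ref{MainTh3}---or the generalised version Theorem \ref{MainTh} announced in the introduction---supplies $\psi(p)\le|\Omega|=\psi(2)$ for every $p>2$. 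Hence $\psi$ attains a right maximum at $p=2$, and provided $\psi$ is right-differentiable there, we conclude
$$
\psi'(2^+)\;=\;\lim_{p\downarrow 2}\frac{\psi(p)-\psi(2)}{p-2}\;\le\;0.
$$

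The next step is to compute this derivative explicitly. Differentiating $\mathbf{B}_p(A)=\bigl(\tfrac p2 \det A +(1-\tfrac p2)|A|^2\bigr)|A|^{p-2}$ in $p$ and using $|A|^{p-2} = 1+\tfrac{p-2}{2}\log|A|^2+O((p-2)^2)$ gives
$$
\partial_p\mathbf{B}_p(A)\bigr|_{p=2}\;=\;\tfrac12\bigl[(\det A-|A|^2)+(\det A)\log|A|^2\bigr].
$$
Assuming the interchange of differentiation and integration is valid, one obtains $\psi'(2^+)=\tfrac12\int_\Omega\bigl[(J-|Df|^2)+J\log|Df|^2\bigr]\,dz$. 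Combining with $\psi'(2^+)\le 0$ and $\int_\Omega J\,dz=|\Omega|$, a routine rearrangement gives exactly (\ref{LlogL1}).

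The main obstacle is justifying this differentiation under the integral, i.e.\ the interchange of limit and integral in the difference quotient $\frac{\mathbf{B}_p(Df)-J}{p-2}$. The natural dominating function $|Df|^p$ is not a priori in $\mathscr L^1$ for a mere $\mathscr W^{1,2}$ homeomorphism, so dominated convergence cannot be applied naively. I would split $\Omega=\{|Df|\le 1\}\cup\{|Df|>1\}$: on the first set the quotient is uniformly controlled by a multiple of $J+|Df|^2\in\mathscr L^1$ (using that $x^2|\log x^2|$ is bounded for $x\in[0,1]$); on the second set I would use a reverse Fatou argument, exploiting the one-sided bound $\mathbf{B}_p(Df)\le\tfrac p2 J|Df|^{p-2}$ and the integrability already furnished by (\ref{uusi22}) itself. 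An arguably cleaner alternative is to establish (\ref{LlogL1}) first under the additional assumption that $f$ is quasiconformal, where dominated convergence is immediate and the Lusin property is automatic, and then to exhaust a general homeomorphism by quasiconformal approximants obtained by truncating the Beltrami coefficient $\mu_f$ at levels $|\mu_f|\le 1-1/k$.
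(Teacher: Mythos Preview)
Your core idea---differentiating the Burkholder estimate at $p=2$, cf.\ formula (\ref{Buus})---is exactly the paper's strategy, and your computation of $\partial_p\mathbf B_p|_{p=2}$ is correct. But your Step~3 has a genuine gap. You claim that Theorem~\ref{MainTh3} (or~\ref{MainTh2}) gives $\psi(p)\le|\Omega|$ for every $p>2$ because ``the quasiconformality restriction becomes vacuous as $p\downarrow 2$''. This conflates a limit with a pointwise statement: for each \emph{fixed} $p>2$ those theorems require $|Df|^2\le\tfrac{p}{p-2}J_f$ everywhere, i.e.\ that $f$ be $\tfrac{p}{p-2}$-quasiconformal. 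The hypothesis of Corollary~\ref{LlogL Th} is only that $f$ is a $\mathscr W^{1,2}$ homeomorphism, so its distortion $K(z,f)=|Df|^2/J_f$ may well be unbounded; in that case the Burkholder inequality is unavailable for \emph{any} $p>2$ and there is nothing to differentiate. Your difference quotient $(\psi(p)-\psi(2))/(p-2)$ is then not even known to be $\le 0$.

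The paper resolves this by approximating first. After extending $f$ as identity outside $\Omega$, one invokes the Iwaniec--Kovalev--Onninen approximation lemma (Lemma~\ref{approximation1}) to obtain $\mathscr C^\infty$ diffeomorphisms $f^\ell\in\mathscr W^{1,2}_{\mathrm{id}}(\mathbb C)$ with $f^\ell\to f$ in $\mathscr W^{1,2}$ and a.e. Each $f^\ell$, being a smooth diffeomorphism equal to the identity near infinity, is automatically $K_\ell$-quasiconformal for some finite $K_\ell$, so Theorem~\ref{MainTh2} now legitimately applies and letting $p\searrow 2$ gives (\ref{LlogL1}) for $f^\ell$. The limit $\ell\to\infty$ is then handled by splitting $J\log|Df|^2$ into a Fatou piece $J\log(1+|Df|^2)$ and a dominated piece $-J\log(1+|Df|^{-2})$. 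Your proposed patch via truncation of $\mu_f$ heads in the same direction but does not directly yield approximants with the required identity boundary behaviour, and the convergence of the resulting principal solutions to $f$ is delicate precisely when $|\mu_f|$ approaches $1$ on a set of positive measure---the very case that needs attention.
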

\noindent Reflecting back on Conjecture \ref{Morrey's}, note that the functional $${ \mathscr  F}(A) = \,\left(1\, + \,\log |A|^2 \,\right )\, \det(A)\, - \, |A|^2\, $$ is 
 rank-one concave. However, with growth stronger than quadratic it is not polyconcave \cite{Ball1}, i.e.~cannot be written as a concave function of the minors of $A$. According to   Corollary \ref{LlogL Th} this functional is nevertheless quasiconcave with respect to  homeomorphic perturbations of the identity. Going to the inverse maps yields a quasiconcavity result for the functional
\begin{equation}\label{functional}
{\mathscr H} (A):=\frac{1}{2} \frac{|A|^2}{{\rm det}\, A}+\log \big( {\rm det}\, A\big)-\log |A|, \qquad \;\; {\rm det }\, A >0.
\end{equation}
 This can be interpreted as a sharp integrability of $\,\log J(z,f)\,$ for planar maps of finite distortion, see Corollary \ref{loginv}  below.  
 
It is  of course classical  \cite{Mu,GI} that the nonlinear differential expression
$J(z,f) \,\log \,|Df(z)| ^2$ belongs to $\;\mathscr L^1_{\textrm{loc}} (\Omega)$
  for every $\; f \in {\mathscr W}^{1,2}_{\textnormal{loc}} (\Omega)\,$ whose
Jacobian determinant $\, J(z,f) = \textnormal{det}\,Df(z)\,$ is nonnegative.
 The novelty in (\ref{LlogL1}) lies in the best constant $\,C= 1$ in the right hand side, and the proof of  $\,\mathscr L\textnormal{log}\mathscr L$-integrability of the Jacobian is new. 
 
In turn, the limit $\,p\to \infty\,$ yields the following sharp inequality.
\begin{corollary}\label{expint}
Denote by ${\mathbf S}$ the Beurling-Ahlfors operator (defined in (\ref{56})) and assume that $\mu$ is a measurable function with $|\mu (z)|\leqslant \chi_{\mathbb D}(z)$ for every $z \in \mathbb{C}$. Then
\begin{equation}\label{expint1}\int_{\mathbb D} \;\big(\, 1 - |\mu (z)|\big)\; 
e^{|\mu(z)|}\; \bigl| \exp({\mathbf{S}\mu(z)}) \bigr| \;dz \;\leqslant \; \pi .
\end{equation}
Equality occurs  for an extensive class of piece-wise radial mappings discussed in Section \ref{LlogLeq}.
\end{corollary}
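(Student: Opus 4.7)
The plan is to obtain (\ref{expint1}) by taking the limit $p \to \infty$ in the main inequality (\ref{uusi22}), applied to suitable principal solutions of the Beltrami equation. Fix $\mu$ with $|\mu| \leqslant \chi_{\mathbb D}$ and, for $p > 2$, set $\nu_p := \mu/(p-1)$; let $f_p$ be the principal solution of $(f_p)_{\bar z} = \nu_p (f_p)_z$ on $\C$, normalized by $f_p(z) - z = O(1/z)$ at infinity. Since $|\mu_{f_p}| \leqslant 1/(p-1)$, the map $f_p$ is $K_p$-quasiconformal with $K_p = p/(p-2)$, placing us exactly at the critical exponent $p = 2K_p/(K_p - 1)$; in particular $\mathbf B_p(Df_p) \geqslant 0$ pointwise, so the version of Theorems \ref{MainTh3}--\ref{MainTh2} in which identity boundary values are relaxed to asymptotic normalization at infinity (Theorem \ref{MainTh} of Section \ref{se:proof}) can be invoked with $\Omega = \mathbb D$.

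A direct computation in Wirtinger derivatives, using $|Df| = |f_z|(1+|\mu_f|)$ and $J_f = |f_z|^2(1 - |\mu_f|^2)$, factorizes the Burkholder integrand as
\[
\mathbf B_p(Df_p) \;=\; |(f_p)_z|^{\,p}\,(1+|\nu_p|)^{p-1}\,\bigl(1 - (p-1)|\nu_p|\bigr) \;=\; |(f_p)_z|^{\,p}\,\Bigl(1 + \tfrac{|\mu|}{p-1}\Bigr)^{p-1}(1 - |\mu|),
\]
so the integral inequality (\ref{uusi22}) reduces to
\[
\int_{\mathbb D} |(f_p)_z|^{\,p}\,\Bigl(1 + \tfrac{|\mu|}{p-1}\Bigr)^{p-1}(1 - |\mu|)\,\dtext z \;\leqslant\; \pi.
\]

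It remains to pass to the limit $p \to \infty$. The middle factor converges pointwise to $e^{|\mu|}$ (and is dominated by $e$). For the leading factor, the family $f_\lambda$ of principal solutions corresponding to Beltrami coefficients $\lambda\mu$, $\lambda \in \mathbb D$, is a holomorphic motion; consequently $\lambda \mapsto (f_\lambda)_z(z)$ is, for almost every $z$, a non-vanishing holomorphic function of $\lambda$, with $(f_0)_z \equiv 1$ and $\partial_\lambda (f_\lambda)_z|_{\lambda = 0} = \mathbf S \mu$. Hence $\log(f_\lambda)_z(z) = \lambda\,\mathbf S\mu(z) + O(\lambda^2)$ a.e., and evaluating at $\lambda = 1/(p-1)$ gives
\[
|(f_p)_z|^{\,p} \;=\; \exp\!\bigl(p\,\re\log(f_p)_z\bigr) \;\xrightarrow[p \to \infty]{}\; e^{\re \mathbf S \mu} \;=\; \bigl|\exp(\mathbf S \mu)\bigr|
\]
pointwise almost everywhere in $\mathbb D$. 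Since the integrands are non-negative, Fatou's lemma then delivers (\ref{expint1}). The main obstacle is the a.e.\ pointwise control of the remainder $O(\lambda^2)$: this will be secured by a Schwarz-type bound applied to the holomorphic map $\lambda \mapsto \log(f_\lambda)_z(z)$, which is defined and uniformly bounded on a disc in $\lambda$ of radius depending only on $\|\mu\|_\infty \leqslant 1$, yielding an a.e.\ estimate of the form $|\log(f_\lambda)_z(z) - \lambda\,\mathbf S\mu(z)| \leqslant C(z)|\lambda|^2$ for $|\lambda| \leqslant 1/2$, which is precisely what is needed to push the pointwise limit through.
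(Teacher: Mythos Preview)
Your argument is correct and follows essentially the same route as the paper's proof: rescale the coefficient to $\mu/(p-1)$ (the paper writes $\varepsilon\mu$ with $p=1+1/\varepsilon$), apply Theorem~\ref{MainTh} at the critical exponent, expand $(f_p)_z$ to first order in $1/(p-1)$ via the analytic dependence on the Beltrami coefficient, and pass to the limit with Fatou. The only cosmetic difference is that the paper justifies the pointwise expansion $f_z=1+\varepsilon\,\mathbf S\mu+O(\varepsilon^2)$ directly through the Neumann series and the $\mathscr L^2$-isometry of $\mathbf S$, whereas you phrase it via a Schwarz/Cauchy estimate on the holomorphic map $\lambda\mapsto\log(f_\lambda)_z(z)$; both produce the needed $C(z)\,|\lambda|^2$ remainder for a.e.\ $z$.
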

\noindent Prior to the above result, it was known that the area distortion results \cite{As2} yield the exponential integrability of $\re {\mathbf S}\mu$  under the strict inequality
$\|\mu \|_\infty <1$, see \cite[p. 387]{AIMb}.  

The proofs of the Corollaries are found in Section \ref{se:limitcases}.
Section \ref{se:radial} contains further observations on the Burkholder functionals. For example,  their local maxima are  discussed and  new conjectures are posed also in the higher dimensional setup. We finally mention that an extension of Burkholder functionals to all real values of the exponent $p$ is given in Section \ref{se:proof}.

\section{Proof of the Interpolation Lemma}\label{se:interpolation}

Before embarking into the proof, let us remark that often analytic families of functions are defined
by considering analytic functions having values in the Banach space $\,\mathscr L^p (\Omega,\,\sigma)\,$
for $p\geqslant 1.$ In this case it is well-known (e.g. \cite[Thm. 3.31]{R}) that one may define analyticity of the family by several equivalent conditions, e.g. by testing elements from the dual. This notion agrees with the definition given in the introduction, see \cite[Lemma 5.7.1]{AIMb}.
\smallskip

\begin{proof}[Proof of Lemma \ref{interpolation2}] 
We may, and do, assume that   $\,M_0 = 1\,$ and that $a=0$; the case $a>0$ reduces to this by simply considering the analytic family $e^{-a\lambda}\Phi_\lambda (x)$. Similarly by taking restrictions we may assume
$\sigma(\Omega) < \infty$.

We first consider the case $0<p_0, p_1<\infty,$ and   establish the result  in the situation  where for a fixed $A\in (1,\infty )$ there  is the uniform bound
\begin{equation}\label{uniform}
\frac{1}{A}\leqslant \left| \Phi_\lambda (x) \right| \leqslant A\qquad {\rm for \; all}\quad \lambda\in \Hr_+\;\;{\rm and}\;\;
x\in\Omega.
\end{equation}
This is to ensure that all of our integrals and computations below are meaningful.
At the end of the proof we  get rid of this extra assumption.

Let $\theta \in (0,1)$ be given as in the statement of the lemma. First, we will find the support line to the convex function $\frac1p \mapsto  \log \| \Phi_\theta \|_p$ at $\frac{1}{p_\theta}$. We are looking for a function $u_p(\theta)$ with the following properties,
\begin{equation} 
\label{eq:supportline}
u_p(\theta) = \frac{1}{p} \, I + u_\infty(\theta) \leqslant  \log \| \Phi_\theta \|_p  \quad \text{and} \quad u_{p_\theta}(\theta)= \log \| \Phi_\theta \|_{p_\theta},
\end{equation}
where $I$ and $u_\infty(\theta)$ are independent of $p$. Using the concavity of the logarithm function we can write down these terms explicitly. Indeed, by concavity, for any probability density $\wp(x)$ on $\Omega$ and for any exponent $0< p < \infty$,
$$
 \frac{1}{p}\int_\Omega\, \wp(x) \log \left(\frac{|\Phi_\theta(x)|^{p}}{\wp(z)}\right)\;\textrm{d}\sigma \leqslant \log \| \Phi_\theta \|_{p},
$$
where equality holds for $p=p_\theta$ with the following choice of density
\begin{equation}
\label{dens}
 \wp(x) \;: = \; \frac{\big|\Phi_\theta(x)\big|^{p_\theta}}{\int_\Omega \big|
\Phi_\theta (y) \big|^{p_\theta}\;\textrm{d}\sigma (y)}, \quad
\int_\Omega \;\wp(x)\,\textrm{d}\sigma (x)\;= 1.
\end{equation}
It is useful to note that because of our assumptions \eqref{uniform}, the $\wp$ is uniformly bounded from above and below.
With this in mind we find the coefficients in \eqref{eq:supportline}, by using the fixed density  \eqref{dens} and by writing 
\[ I:= \int_\Omega\, \wp(x) \log \left(\frac{1}{\wp(z)}\right)\,\textrm{d}\sigma
\quad \text{and} \quad u_\infty(\theta):=\int_\Omega\, \wp(x) \log |\Phi_\theta(x)|\;\textrm{d}\sigma.
\]

The key idea in this representation is that  we may embed the line $u_p(\theta)$ in a harmonic family of lines parametrized by 
$\lambda \in \mathbb{H_+}$,  

\[ u_p(\lambda):= \frac{1}{p} \, I + u_\infty(\lambda) \;=\; \frac{1}{p}\int_\Omega\, \wp(x) \log \left(\frac{|\Phi_\lambda(x)|^{p}}{\wp(z)}\right)\;\textrm{d}\sigma.
\]
It is important  to notice  that we kept the slope $I$ fixed and because of the non-vanishing assumption the constant term $u_\infty(\lambda)$  becomes a harmonic function of $\lambda$. 
Again, in view of Jensen's inequality
we have the envelope property, the analogue of \eqref{eq:supportline} for all $\lambda \in \mathbb{H_+}$ and $0<p \leqslant \infty$,
\begin{equation}
\label{eq:envelope}
 u_p(\lambda) \leqslant \log \| \Phi_\lambda \|_p \quad \text{and} \quad u_{p_\theta}(\theta)= \log \| \Phi_\theta \|_{p_\theta}.
\end{equation}

By our assumptions, for $p= p_0$ we thus have  $u_{p_0}(\lambda) \leqslant \log \|\Phi_\lambda\|_{p_0} \leqslant 0$ for all $\lambda \in \Hr_+$. Here  Harnack's inequality for nonpositive harmonic functions in  $\mathbb H_+$ takes a particularly simple form when  restricted to 
the interval $\theta \in (0,1)$: 
\begin{equation}
\label{eq:harnack}
u_{p_0}(\theta)\leqslant \theta\, u_{p_0}(1)\qquad {\rm for}\;\; \theta \in (0,1).
\end{equation}
Finally combining  the estimates \eqref{eq:envelope} and \eqref{eq:harnack} yields
 \begin{eqnarray*}\label{kaava}
 \log \|\Phi_\theta\|_{p_{\theta}}  &=& u_{p_\theta}(\theta) = u_{p_0}(\theta)  + 
 \left( \frac{1}{p_\theta}-\frac{1}{p_0} \right) I  \\ \\
&\leqslant& \theta \, u_{p_0}(1) + \theta \left( \frac{1}{p_1}-\frac{1}{p_0} \right) I\\  \\
& = &  \theta \, u_{p_1}(1) 
\leqslant \theta \, \log M_1,
\end{eqnarray*}
which is exactly what we aimed to prove.

The argument can easily be adapted to accommodate the cases when $p_0=\infty$  or $p_1=\infty$. We will instead use a limiting argument. First, normalize to  $\sigma (\Omega )=1$, then $\| \cdot \|_{p}$ increases with $p$ and one has $\| f\|_\infty=\lim_{p\to\infty}\| f\|_p .$ Hence, as \eqref{uniform} holds, we
obtain the desired result by  approximating the possibly infinite exponents by finite ones.
  
Let us finally dispense with the extra assumption (\ref{uniform}). Since the removal of a null set from $\Omega$ is allowed, we may assume that the non-vanishing condition holds
for every $x\in\Omega ,$ i.e.  one may take  $E=\emptyset$ in (\ref{vanishing}). 
Choose first a family $\varphi_n(\lambda)$ of M\"obius transformations such that
$$\varphi_n(1) = 1, \qquad \lim_{n\to \infty} \varphi_n (\lambda )=\lambda, \;\; \lambda\in \Hr_+, \quad \; \mbox{ and }\;\; \,\overline{\varphi_n (\Hr_+ )} \subset \Hr_+, \;\;  n \in \mathbb N,
$$
and let  for any positive integer $k$, 
\begin{equation}\label{goodset}
\Omega_{n,k}:=\{ x\in\Omega\; :\; |\Phi_\lambda (x)|\in [1/k, k]\;\; {\rm for \; all }\;\; \lambda\in \overline{\varphi_n (\Hr_+ )} \; \} .
\end{equation}
The measurable sets $\Omega_{n, k}$  fill the space,
$\bigcup_{k=1}^\infty \Omega_{n ,k}=\Omega$. Moreover, for each fixed  integer $k\geqslant 1$ the non-vanishing analytic family
$$
(x,\lambda )\mapsto \ \Phi_{{\varphi_n} (\lambda)}(x), \qquad x \in \Omega_{n,k},  \;\; \lambda\in \Hr_+, 
$$
satisfies the uniform bound (\ref{uniform}), and thus we may interpolate it. Letting  $k\to\infty$ gives $\|\Phi_{\varphi_n (\theta)}\|_{p_\theta}\leqslant M_1^\theta$, and  the claim \eqref{logconcav} follows by Fatou's lemma taking a second limit $n \to \infty$.  
\end{proof}

We remark that the Harnack inequality used above can be deduced from the standard Harnack inequality in the unit disc $\,u(w) \leqslant \;\frac{1-|w|}{1+|w|}\, u(0)$ by a change of variables $\lambda=(1-w)/(1+w). $ The very same change of variables allows one to deduce Lemma \ref{interpolation} for the values $\lambda\in (0,1)$ as a consequence of Lemma \ref{interpolation2}, and rest follows from rotational symmetry.

\section{Proofs of the Main Theorems}\label{se:proof}

We start with some preliminaries. Our goal is to  apply the Interpolation Lemma \ref{interpolation} in estimating  the variational integrals   such as \eqref{uusi22}, and therefore we look for  analytic and  nonvanishing families of  gradients of mappings. In view of the {\it Lambda-lemma} \cite{MSS} this takes us  to the notion of  quasiconformal mappings.  
By definition, in any dimension $n \geqslant 2$ these are homeomorphisms $f:\Omega \to \Omega'$  in the Sobolev class $\mathscr W^{1,n}_{loc}(\Omega)$ for which 
the differential matrix $D\!f(x) \in \mathbb R^{n\times n}$  and its determinant are coupled in the \textit{distortion inequality},
\begin{equation}\label{distortion}
 |D\!f(x)|^n  \leqslant K(x)\, \det D\!f(x)\;,\quad \textrm{where}\;\;\;|D\!f(x)|  = \max_{|\xi| =1} \; |D\!f(x) \xi|,
\end{equation}
for some bounded function $K(x)$. The smallest  $\,  K(x) \geqslant 1 \, $ for which (\ref{distortion}) holds almost everywhere is referred to as the  \textit{distortion function} of the mapping $\,f$. We call $f$ $K$-quasiconformal if  $K(f) := \| K(x)\|_{\infty} \leqslant K$. 

In dimension $\,n=2\,$   it is useful to employ complex notation by introducing the  Cauchy-Riemann operators 
$$
\partial f = f_z= \frac{\partial f}{\partial z} = \frac{1}{2} \left(\frac{\partial f}{\partial x} - i \frac{\partial f}{\partial y}\right )\; \;\;\;\textrm {and}\;\;\;\;  \bar{\partial} f = f _{\bar z} = \frac{\partial f}{\partial \bar z} = \frac{1}{2} \left(\frac{\partial f}{\partial x} + i \frac{\partial f}{\partial y}\right)   $$
Writing
$$
  |D\!f(z)|  \,= |f_z|\;+\;|f_{\bar{z}}|\,\quad \;\;\;\textrm{and}\quad\;\; \det\,D\!f(z) \, = \,J(z,f) \,= \,|f_z|^2\,-\,|f_{\bar{z}}|^2,
$$
we see that for a planar Sobolev homeomorphism $f$ the $K$-quasiconformality simplifies to a linear
equation
\begin{equation}\label{Bel1}
\, f_{\bar{z}} = \mu(z) f_z
\end{equation}
called the {\it Beltrami equation}. Here the {\it dilatation function} $\mu$ is measurable and
satisfies
$\| \mu\|_\infty =:k=\frac{K-1}{K+1}<1$. 

The Beltrami equation  will then enable holomorphic deformations of the homeomorphism $f$ and of its gradient. Indeed,  under a proper normalization the solutions  to  \eqref{Bel1} and  their derivatives  depend holomorphically on the coefficient $\mu$, see \cite[p. 188]{AIMb}.  

For choosing the normalization,  recall that  Theorems \ref{MainTh3} - \ref{MainTh2}  consider identity boundary values, and thus mappings that extend conformally outside $\Omega$. We therefore look for solutions to   \eqref{Bel1} defined in the entire plane $\mathbb C$, with the dilatation $\mu$ vanishing outside the  domain $\Omega$.  
On the other hand,  the identity boundary values cannot be retained under general holomorphic deformations; one needs to content 
with the asymptotic normalization
\begin{equation}\label{principal}
\,f(z) = z + b_1 z^{-1} + b_2 z^{-2} + \cdots \;,\;\textnormal{for}\;\; |z| \to \infty 
\end{equation}
Global ${\mathscr W}^{1,2}_{loc}$-solutions  to \eqref{Bel1} with these asymptotics  are called  
{\it  principal solutions}. They exist and are unique for each coefficient $\mu$  supported in the bounded domain $\Omega$, and each of them is a homeomorphism. They can be found simply in the form of the Cauchy transform
 \begin{equation}
f(z) = z + \frac{1}{\pi} \int_\mathbb C \frac{\omega(\xi)\, \textnormal d \xi}{ z - \xi }\;,\;\;\;\textnormal{where} \;\; \omega = f_{\bar{z}} \in \mathscr L^2(\mathbb C)
\end{equation}
Substituting  $\,\omega:= f_{\bar z}$ into (\ref{Bel1}) yields a  singular integral equation for the unknown density function $\,\omega \in \mathscr L^2(\mathbb C)$,
 \begin{equation}\label{SBel}
\omega \;-\; \mu\, \mathbf S \omega \;=\;\mu\;\in \mathscr L^2(\mathbb C)
\end{equation}
Here the \textit{Beurling Transform}, a  Calder\'on-Zygmund type singular integral,
\begin{equation}\label{56}
 \mathbf S \omega \;= \;- \frac{1}{\pi}  \int_\mathbb C \frac{\omega(\xi)\, \textnormal d \xi}{ (z - \xi)^2 }\;= \; f_z\,-\, 1
\end{equation}
is an isometry in $\mathscr  L^2(\mathbb C)$, whence (\ref{SBel}) can be solved by the Neumann series. We refer to the well-known monographs \cite{Ah}, \cite{LV} and \cite{AIMb} for the basic properties and further details on quasiconformal mappings.

Regularity results in $\mathscr W^{1,p}_{loc}$ originated in \cite{Bo2}, \cite{Bo1}, where  $\mathbf S$ was considered on $\mathscr L^p$ for suitable exponents $p>2$. 
As a Calder\'on-Zygmund operator $ \mathbf S\; $ is bounded in  $\,\mathscr L^p$, but determining here the operator norm is a much harder question. The as yet unsolved conjecture \cite{Iw4} of Iwaniec asserts that
\begin{conjecture}\label{p-norm}
For all $\,1<p<\infty\,$ it holds
\begin{equation}\label{55}
\|\mathbf{S} \|_{\mathscr  L^p(\mathbb C)} = p^* - 1 := \;\left\{\begin{array}{ll} p-1\;,\quad\qquad\; \textnormal{if}\;\;\; 2\leqslant p < \infty\\
       \;\;1/(p-1) \;,\;\;\;\;\textnormal{if}\;\;\; 1<p\leqslant 2
       \end{array}\right.
\end{equation}
\end{conjecture}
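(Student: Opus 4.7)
The plan is to derive the conjecture from a suitable strengthening of the Burkholder quasiconcavity principle established in Theorem~\ref{MainTh2}. Since $\mathbf{S}$ is self-adjoint in the bilinear $\mathscr L^p$--$\mathscr L^{p'}$ pairing, the two cases in \eqref{55} are equivalent by duality, so I would focus on proving $\|\mathbf{S}\|_{\mathscr L^p(\mathbb C)}\leqslant p-1$ for $p\geqslant 2$. Sharpness at $p-1$ is already realized by standard radial tests such as $\omega(z)=(\bar z/z)\chi_{\mathbb D}$, so only the upper bound is at stake.

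The next step is to set up the bridge between the Burkholder inequality and the operator norm. Given a compactly supported $\omega\in\mathscr L^p(\mathbb C)$, write $\phi:=1+\mathbf{S}\omega$, which formally is the $z$-derivative of the principal solution of the Beltrami equation with coefficient $\mu:=\omega/\phi$. Writing $|Df|=|\phi|+|\omega|$ and $J_f=|\phi|^2-|\omega|^2$, a direct computation gives the pointwise factorization
\[
\textbf{B}_p(Df) \;=\; \bigl(|\phi|-(p-1)|\omega|\bigr)\bigl(|\phi|+|\omega|\bigr)^{p-1}.
\]
If one could show (with appropriate boundary/asymptotic corrections reflecting the principal normalization at infinity) that $\int_{\mathbb C}\textbf{B}_p(Df)\,dz\leqslant 0$, then combining this pointwise factorization with H\"older's inequality applied to the two terms $|\phi|(|\phi|+|\omega|)^{p-1}$ and $|\omega|(|\phi|+|\omega|)^{p-1}$ yields $\|\phi\|_p\leqslant (p-1)\|\omega\|_p$, and subtracting the constant $1$ (plus a scaling/truncation argument to pass from compactly supported $\omega$ to general $\mathscr L^p$) gives $\|\mathbf{S}\omega\|_p\leqslant (p-1)\|\omega\|_p$, which is the conjecture.

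The main obstacle is that Theorem~\ref{MainTh2} imposes the pointwise sign constraint $\textbf{B}_p(Df)\geqslant 0$, equivalently $|Df|^2\leqslant \tfrac{p}{p-2}J_f$, i.e.\ $f$ must be $K$-quasiconformal with $p\leqslant 2K/(K-1)$. For a generic test function $\omega$ this will fail badly: the formal principal solution need not even be a homeomorphism, and the sign condition is precisely what separates Theorem~\ref{MainTh2} from the full conjecture. My attack on this would be to use the Interpolation Lemma~\ref{interpolation} to sidestep the constraint: embed $f$ into a holomorphic family $f_\lambda$ of principal solutions with dilatation $\lambda\mu(z)/\|\mu\|_\infty$ (so $f_0=\mathrm{id}$ and the quasiconformal regime occupies a small disk around the origin), apply the quasiconcavity inequality on the boundary of a small parameter disk where the pointwise Burkholder sign is controlled, and interpolate the resulting $\mathscr L^{p_\lambda}$-estimates for the analytic family of gradients $\partial f_\lambda$ back out to $\lambda=1$. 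The delicate point will be to identify the correct non-vanishing analytic family of integrands so that Lemma~\ref{interpolation2} applies without degrading the sharp constant $p-1$; the existence of the non-symmetric piecewise radial local maxima exhibited in Section~\ref{se:radial} warns that the extremal geometry of $\textbf{B}_p$ is rigid, so any loss in the interpolation step must be exactly absorbed by the scaling in $\lambda$, and it is conceivable that a genuinely new idea (rather than a straightforward perturbation of the methods of Section~\ref{se:proof}) is required to close this gap.
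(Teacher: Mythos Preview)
The statement you are attempting to prove is Conjecture~\ref{p-norm}, the Iwaniec conjecture on the $\mathscr L^p$-norm of the Beurling--Ahlfors transform. The paper does \emph{not} prove this statement; it is explicitly presented as ``as yet unsolved'', and the paper only remarks that the \emph{unrestricted} quasiconcavity of $\mathbf B_p$ would imply it. Theorem~\ref{MainTh2} establishes quasiconcavity only under the pointwise constraint $\mathbf B_p(Df)\geqslant 0$, which is strictly weaker than what is needed for~\eqref{55}. So there is no ``paper's own proof'' to compare against.

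Your proposed route has a structural gap that the methods of Section~\ref{se:proof} cannot close. You correctly identify that the sign constraint in Theorem~\ref{MainTh2} confines you to the quasiconformal range $p\leqslant 2K/(K-1)$, and you propose to escape it by embedding the map into a holomorphic family $f_\lambda$ and interpolating from a small parameter disk (where the constraint holds) ``back out to $\lambda=1$''. But the Interpolation Lemmas~\ref{interpolation2} and~\ref{interpolation} run only in the \emph{inward} direction: they take a uniform bound on the \emph{entire} parameter domain together with a bound at a single interior point and produce estimates at intermediate radii. The mechanism is Harnack's inequality~\eqref{eq:harnack} for nonpositive harmonic functions, and there is no outward extrapolation. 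Concretely, to reach $\lambda=1$ you would need an a~priori $\mathscr L^{p_0}$-bound on the whole family $\{\Phi_\lambda:|\lambda|<1\}$, and in the paper's scheme that global input is the $\mathscr L^2$ area inequality---available precisely because every $f_\lambda$ in the family is quasiconformal. Once you try to exceed the range $p\leqslant 1+1/\|\mu\|_\infty$ (equivalently, allow test data $\omega$ for which the associated ``$f$'' is not a homeomorphism), no such global bound is known; supplying one is essentially the conjecture itself. Your final sentence conceding that ``a genuinely new idea'' may be required is the accurate conclusion: the argument as outlined is not a proof, and the conjecture remains open.
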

\smallskip

\noindent As mentioned in the introduction, the full quasiconcavity of the Burkholder functional ${\bf B}_p$ would, among its many potential consequences, imply also  \eqref{55}.
This follows from another very useful inequality of Burkholder \cite{Bu1}. Namely,  with the positive constant $\,C_p = p\left(1 - \frac{1}{p}  \right)^{1-p}$ for $\, p \geqslant2\,$,  we have
\begin{equation*}
C_p \cdot \big(\,|f_z|^p \; - (p-1)^p |f_{\bar{z}}|^p \bigr)  \leqslant \, \big(\,|f_{z}|\,-\,(p-1)\,|f_{\bar{z}}| \big)\cdot \big(\,|f_z|\;+\;|f_{\bar{z}}|\big)^{p-1}\,\equiv  \, {\bf B}_p(Df)
\end{equation*}
Thus  Burkholder's functional can be viewed as a rank-one concave envelope of the 
$p$-norm functional of the left hand side. It is because of this connection why Morrey's Problem becomes relevant to Conjecture \ref{p-norm}.
For precise statements and further information on related topics we refer to \cite{Ba1,BL,Bu1,VN}. 

It is also appropriate to note that the origin of the Burkholder functional lies in Burkholder's groundbreaking work on  sharp estimates for martingales  \cite{Bu4} -  \cite{Bu1}. This work has been later on extended in various ways, including  applications to computing optimal or almost optimal estimates for norms of singular integrals, e.g. of the Beurling-Ahlfors operator. Also the Bellman function techniques (see e.g.~\cite{NTV2}) are closely related. We mention only
  \cite{BJ2}, \cite{BW}, 
 \cite{Bu1},  \cite{DV}, \cite{GMS},    \cite{NTV2},   \cite{NTV},  \cite{PV},    \cite{VN}      
and refer to the  recent survey \cite{Ba1} for a wealth of information and an extensive list of references.

We next recall  some standard facts on smooth approximation of principal solutions of the Beltrami equation. Especially, we have a special interest in the Jacobian and whether it is strictly positive. Schauder's regularity theory of elliptic PDEs with H\"{o}lder continuous coefficients becomes useful. It applies to general quasilinear Beltrami systems, see Theorem 15.0.7 in  \cite{AIMb}.
$$
 \frac{\partial f}{\partial\bar{z}}\; = \;\mu(z,f)  \frac{\partial f}{\partial z} \;+ \; \nu(z,f) \overline{\frac{\partial f}{\partial z}}\;,\quad\;\;\;|\mu(z,f)| + \;| \nu(z,f)| \leqslant k < 1
$$
where the coefficients $\,\mu\, ,\,\nu\, \in \mathscr C^\alpha( \mathbb C\times \mathbb C)\,,\,\; 0<\alpha<1\,$. Accordingly, every  solution $\,f\in\mathscr W^{1,2}_{\textnormal{loc}}(\mathbb C \times \mathbb C)$ is  $\,\mathscr C^{1,\,\alpha}_{\textnormal{loc}}$-regular. Of course, if $\,f\,$ is quasiconformal, then the inverse map $\, h(w)  = f^{-1}(w)\, $ is also $\,\mathscr C^{1,\,\alpha}_{\textnormal{loc}}$-regular  since it solves similar system
$$
 \frac{\partial h}{\partial\bar{w}} \;=\; -\nu(h,w) \, \frac{\partial h}{\partial w} \;- \; \mu(h,w)\, \overline{\frac{\partial h}{\partial w}}
$$
Thus, we have
\begin{lemma}[$\mathscr C^{1,\,\alpha}$-regularity]\label{aregu}
The principal solution of the Beltrami equation (\ref{Bel1}) in which $\,\mu\,\in \mathscr C^\alpha( \mathbb C)\,,\; 0<\alpha<1\,,\,$ is a
 $\,\mathscr C^{1,\,\alpha}( \mathbb C )\,$ - diffeomorphism. In particular, $\,|f_z|^2 \geqslant J(z,\,f) > 0\,$,  everywhere.
\end{lemma}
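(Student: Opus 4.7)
The plan is to invoke the Schauder regularity theorem for quasilinear Beltrami systems (Theorem~15.0.7 of \cite{AIMb}), which is cited just above the lemma, twice: first for $f$ itself, and then for the inverse map $h=f^{-1}$. The strict positivity of $J_f$ will then fall out of the chain rule. The key point is that the Beltrami equation \eqref{Bel1} with $\mu\in \mathscr C^\alpha(\mathbb C)$ is a linear (in particular quasilinear) elliptic first-order system whose coefficients are H\"older-continuous, so the cited theorem applies off the shelf.

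First I would note that the principal solution produced from \eqref{SBel} belongs to $\mathscr W^{1,2}_{\loc}(\mathbb C)$, so Theorem~15.0.7 of \cite{AIMb} upgrades it to $f\in \mathscr C^{1,\alpha}_{\loc}(\mathbb C)$. Outside a large disk containing $\mathrm{supp}\,\mu$ the equation reduces to $f_{\bar z}\equiv 0$, so $f$ is holomorphic there, and the principal-solution expansion \eqref{principal} ensures that $f(z)-z$ and its derivatives decay at infinity; altogether $f\in \mathscr C^{1,\alpha}(\mathbb C)$.

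Next I would apply the same theorem to $h=f^{-1}$, which is a $\mathscr W^{1,2}_{\loc}$-quasiconformal homeomorphism satisfying the Beltrami-type system displayed just above the lemma. In the present setting this system reduces to
\[
h_{\bar w} \;=\; -\mu\bigl(h(w)\bigr)\,\overline{h_w},
\]
i.e.\ the coefficients, viewed as functions of $(w,z)\in \mathbb C\times\mathbb C$, are $\nu_{\mathrm{inv}}(w,z)\equiv 0$ and $\mu_{\mathrm{inv}}(w,z):=\mu(z)$. These lie in $\mathscr C^\alpha(\mathbb C\times\mathbb C)$ and satisfy the ellipticity bound $|\mu_{\mathrm{inv}}|+|\nu_{\mathrm{inv}}|\leqslant k<1$, so Theorem~15.0.7 of \cite{AIMb} applies a second time and yields $h\in \mathscr C^{1,\alpha}(\mathbb C)$.

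With both $f$ and $h$ known to be $\mathscr C^1$, differentiating $f\circ h=\id$ gives $Df(h(w))\cdot Dh(w)=I$, and taking determinants, $J_f(h(w))\cdot J_h(w)=1$ at every $w\in\mathbb C$. In particular $J_f(z)\ne 0$ for every $z$; combined with $J_f\geqslant 0$ almost everywhere (orientation-preserving quasiconformality) and the continuity of $J_f$, this forces $J_f>0$ pointwise. The bound $|f_z|^2\geqslant J_f$ is then immediate from $J_f=|f_z|^2-|f_{\bar z}|^2$. The only mild subtlety in this plan is the second invocation of Schauder, where the Beltrami coefficient depends on the unknown $h$; this is precisely what the quasilinear formulation of Theorem~15.0.7 of \cite{AIMb} is designed to handle, and it is trivially in force here since the dependence is only through the first variable.
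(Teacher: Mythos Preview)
Your proposal is correct and follows exactly the route sketched in the paragraph preceding the lemma: apply Theorem~15.0.7 of \cite{AIMb} to $f$, then to $h=f^{-1}$ via the quasilinear inverse system, and conclude that $f$ is a $\mathscr C^{1,\alpha}$-diffeomorphism with nonvanishing Jacobian. One small slip: in the inverse equation the coefficient $-\mu(h)$ depends on the \emph{dependent} variable $h$ (the second argument in your $(w,z)$ notation), not the first --- but this is harmless, since the quasilinear theorem only requires joint $\mathscr C^\alpha$-regularity of the coefficients.
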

As might be expected, almost  everywhere convergence of the Beltrami coefficients yields $\,\mathscr W^{1,\,2}_{\textnormal{loc}}$- convergence of the principal solutions. The precise statement 
reads as follows:
\begin{lemma}[Smooth Approximation]
Suppose the Beltrami coefficients  $\,\mu_{_\ell}\, \in \mathscr C_\circ^\infty(\Omega)\,$  satisfy \, $\,|\mu_{_\ell} (z)|  \leqslant k < 1 $ , for all  $\,\ell = 1,2,... $,  and converge almost everywhere to $\,\mu\,$. Then the associated principal solutions $\, f^\ell\,:\, \mathbb C \rightarrow \mathbb C\,$ are $\,\mathscr C^\infty$-smooth diffeomorphisms converging in $\,\mathscr W^{1,\,2}_{\textnormal{loc}}(\mathbb C) \,$ to the principal solution of the limit equation $\,f_{\bar{z}} \,=\, \mu(z) f_z\,$.
\end{lemma}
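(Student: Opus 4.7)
The plan is to reduce everything to the singular integral equation \eqref{SBel}, $\omega - \mu\,\mathbf S\omega = \mu$, satisfied by $\omega := f_{\bar z}$. The decisive fact is that since $\|\mu\|_\infty,\|\mu_\ell\|_\infty\le k<1$ and $\mathbf S$ is an isometry on $\mathscr L^2(\mathbb C)$, each operator $I - \mu_\ell\,\mathbf S$ is invertible on $\mathscr L^2(\mathbb C)$ with $\|(I-\mu_\ell\mathbf S)^{-1}\|\le (1-k)^{-1}$ \emph{uniformly} in $\ell$; that uniform bound is what every estimate below will exploit.

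For the smoothness assertion I would invoke Lemma \ref{aregu} (with any $\alpha\in(0,1)$) to conclude that each $f^\ell$ is already a $\mathscr C^{1,\alpha}$-diffeomorphism of $\mathbb C$ with strictly positive Jacobian, since $\mu_\ell\in\mathscr C^\alpha(\mathbb C)$. A standard interior Schauder bootstrap applied to the linear Beltrami equation $f^\ell_{\bar z} = \mu_\ell f^\ell_z$ then upgrades this to $f^\ell\in\mathscr C^\infty(\mathbb C)$, since $\mu_\ell\in\mathscr C^\infty_\circ(\Omega)$. This part is entirely routine.

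For the $\mathscr W^{1,2}_{\loc}$-convergence, set $\omega_\ell := f^\ell_{\bar z}$ and $\omega := f_{\bar z}$, both supported in $\overline\Omega$. Subtracting the two fixed-point equations gives
\[
(I - \mu_\ell\,\mathbf S)(\omega_\ell - \omega) \;=\; (\mu_\ell - \mu)\bigl(1 + \mathbf S\omega\bigr) \;=\; (\mu_\ell - \mu)\,f_z,
\]
so the uniform inverse estimate yields
\[
\|\omega_\ell - \omega\|_{\mathscr L^2(\mathbb C)} \;\le\; (1-k)^{-1}\,\bigl\|(\mu_\ell-\mu)\,f_z\bigr\|_{\mathscr L^2(\mathbb C)}.
\]
The right-hand side tends to zero by dominated convergence: the integrand is dominated by $2k\,|f_z|\,\chi_{\overline\Omega}$, which lies in $\mathscr L^2(\mathbb C)$ because $f_z\in\mathscr L^2_{\loc}(\mathbb C)$ and $\Omega$ is bounded, while the pointwise limit is zero. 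Applying the bounded operator $\mathbf S$ and using $f^\ell_z = 1 + \mathbf S\omega_\ell$, $f_z = 1 + \mathbf S\omega$, I also get $f^\ell_z \to f_z$ in $\mathscr L^2(\mathbb C)$. Finally $f^\ell - f$ is the Cauchy transform of $\omega_\ell-\omega$, a function in $\mathscr L^2$ supported in the fixed bounded set $\overline\Omega$, and the Cauchy transform maps such functions into $\mathscr L^2_{\loc}(\mathbb C)$; hence $f^\ell\to f$ locally in $\mathscr L^2$, completing the $\mathscr W^{1,2}_{\loc}$-convergence.

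The main subtlety is the very first algebraic step: one must verify that $\omega$ and every $\omega_\ell$ really solve the singular integral equation on all of $\mathbb C$ and that $\omega_\ell-\omega \in \mathscr L^2(\mathbb C)$, so that the two equations can be subtracted and the identity $(I-\mu_\ell\mathbf S)(\omega_\ell-\omega) = (\mu_\ell-\mu)f_z$ is legitimate in $\mathscr L^2(\mathbb C)$. This is precisely where the principal-solution normalization earns its keep: it guarantees $\omega,\omega_\ell\in\mathscr L^2(\mathbb C)$ and $\mathbf S\omega = f_z - 1 \in \mathscr L^2(\mathbb C)$, after which the Neumann-series machinery runs unobstructed and the rest is dominated convergence.
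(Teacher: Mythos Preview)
The paper does not actually give a proof of this lemma; it is stated among the ``standard facts on smooth approximation of principal solutions'' and the convergence claim is later attributed to \cite[Lemma 5.3.1]{AIMb}. Your argument is the correct and standard one: subtract the two singular integral equations, use the uniform bound $\|(I-\mu_\ell\mathbf S)^{-1}\|_{\mathscr L^2\to\mathscr L^2}\le (1-k)^{-1}$, and apply dominated convergence to $\|(\mu_\ell-\mu)f_z\|_{\mathscr L^2}$; this is precisely the proof behind the cited lemma in \cite{AIMb}.

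One small point worth tightening: the last sentence, that the Cauchy transform takes $\mathscr L^2$-functions supported in a fixed bounded set continuously into $\mathscr L^2_{\loc}(\mathbb C)$, is true but deserves one line of justification (e.g.\ via $\|C g\|_{\mathscr L^2(D_R)} \le C_R\|g\|_{\mathscr L^2}$ from Young's inequality, since the kernel $1/z$ lies in $\mathscr L^q_{\loc}$ for $q<2$). Otherwise the argument is complete.
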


Every measurable Beltrami coefficient satisfying $\,|\mu(z)| \,\leqslant \; k\, \chi _{_\Omega} (z)\;,\;\;\; 0\leqslant k  <1\,$,  can be approximated this way.

As the last of the preliminaries, in applying  the Interpolation Lemma \ref{interpolation} we will 
need  sharp $\mathscr{L}^2$-estimates of  gradients, valid for all complex deformations of a given mapping. For the principal  solutions in the unit disk $\mathbb D$, these result from the classical  Area Theorem (see e.g.\cite[p. 41]{AIMb}).
\begin{lemma}[Area Inequality] \label{lem:areaineq}
The area of the image of the unit disk under a principal solution in  $\mathbb D$ does not exceed $\,\pi$. It equals $\,\pi\,$ if and only if the solution is the identity map outside the disk.
\end{lemma}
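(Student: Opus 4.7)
The plan is to reduce the statement to the classical Gronwall area theorem applied to $f$ on the exterior of the unit disk. Since the Beltrami coefficient $\mu$ is supported in $\mathbb{D}$, the equation $f_{\bar z}=\mu f_z$ forces $f$ to be holomorphic on $\C\setminus\overline{\DD}$; being a global homeomorphism of the plane, it is moreover univalent there. Hence on $\{|z|>1\}$ the principal solution is a univalent holomorphic function with precisely the prescribed expansion
\[
f(z) \;=\; z + b_1 z^{-1} + b_2 z^{-2} + \cdots .
\]

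Because $f$ is a homeomorphism of $\C$ onto itself, the image $f(\DD)$ coincides, up to the quasicircle $f(\partial\DD)$ (which has zero planar measure), with the bounded complementary component of $f(\{|z|>1\})$. I would then apply Green's theorem on the annulus $\{1<|z|<R\}$ in the form
\[
\int_{1<|z|<R} J(z,f)\, dA \;=\; \frac{1}{2i}\int_{|z|=R}\bar f\, df \;-\; \frac{1}{2i}\int_{|z|=1}\bar f\, df,
\]
substitute the Laurent expansions of $f$ and $f'$ on each bounding circle, and invoke orthogonality of $\{e^{in\theta}\}$ to evaluate both boundary integrals. Sending $R\to\infty$ produces the exact identity
\[
|f(\DD)| \;=\; \pi\Bigl(1 - \sum_{n=1}^{\infty} n\,|b_n|^2\Bigr).
\]
To sidestep any boundary-regularity concern at $|z|=1$, I would first integrate over $\{1+\varepsilon<|z|<R\}$ and let $\varepsilon\to 0$, using uniform convergence of the Laurent series on compacta of $\{|z|>1\}$ together with $\mathscr{W}^{1,2}_{\mathrm{loc}}$ continuity of $f$ across $\partial\DD$.

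Since every term $n|b_n|^2$ is nonnegative, the bound $|f(\DD)|\leqslant \pi$ is immediate, with equality precisely when all $b_n$ vanish, i.e.\ when $f(z)=z$ throughout the exterior, and by continuity on $\C\setminus\DD$. I do not expect any genuine obstacle in this plan: the only non-routine points are the holomorphy and univalence of $f$ off $\DD$ (both built into the notion of a principal solution with $\mu$ supported in $\DD$) and the vanishing of $|f(\partial\DD)|$, which holds because $f(\partial\DD)$ is a quasicircle.
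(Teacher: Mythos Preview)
Your approach is correct and amounts to the classical Gronwall area theorem: since $f$ is conformal and univalent on $\{|z|>1\}$ with the normalized expansion $f(z)=z+\sum_{n\geqslant 1}b_n z^{-n}$, the area enclosed by $f(\{|z|=r\})$ equals $\pi\bigl(r^2-\sum_{n\geqslant 1} n|b_n|^2 r^{-2n}\bigr)$ for each $r>1$, and letting $r\to 1^+$ yields $|f(\DD)|\leqslant\pi$ with equality forcing all $b_n=0$. (A minor remark: the relevant limit is $r\to 1^+$; the $R$-dependence cancels identically once Green's formula is applied, so no $R\to\infty$ limit is actually needed.) The paper, by contrast, argues from the \emph{interior} of the disk. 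It introduces the auxiliary map $g(z)=z+\sum_{n\geqslant 1}b_n\,\bar z^{\,n}$ on $\DD$, which agrees with $f$ on $|z|=1$ because $\bar z=1/z$ there, and invokes the null-Lagrangian property of the Jacobian to obtain
\[
\int_\DD J(z,f)\,\textnormal dz \;=\; \int_\DD J(z,g)\,\textnormal dz \;=\; \int_\DD\bigl(1-|g_{\bar z}|^2\bigr)\,\textnormal dz \;\leqslant\;\pi,
\]
with equality iff $g_{\bar z}\equiv 0$. The two routes produce the same coefficient identity (indeed $\int_\DD|g_{\bar z}|^2=\pi\sum_{n\geqslant 1} n|b_n|^2$), but the paper's interior argument bypasses any discussion of the quasicircle $f(\partial\DD)$ and, more to the point of the surrounding material, foregrounds the null-Lagrangian character of the Jacobian---the very mechanism that supplies the sharp $\mathscr L^2$ endpoint in the subsequent interpolation.
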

\noindent Let us recall a proof emphasizing the null-Lagrangian property of the Jacobian determinant. On the circle we have the equality
 $\,f(z) =  g(z)\,$,  where $\,g \in \mathscr W^{1,2}(\mathbb D) \,$ is given by $\,g(z) =  z + \underset{n\geqslant 1}{\sum}\, b_n \bar{\;z\,}^{n}\;$, for $\,|z| \leqslant 1\,$. 
 This yields
$$
\int_\mathbb D \,J(z, f)\,\textrm{d}z = \int_\mathbb D \,J(z, g)\,\textrm{d}z = \int_\mathbb D \big(1 -|g_{\bar{z}}(z)|^2\,\big)\,\textrm{d}z  \leqslant \pi
$$
Equality occurs if and only if $\,g_{\bar{z}} \equiv 0 \,$, meaning that all the coefficients $b_n \,$ vanish.

Having disposed of these lemmas, we can now proceed to the proof of the main integral estimate, where
in the  complex notation the  Burkholder functional takes the form  \begin{equation}\label{Burkh1}
 \mathscr B_\Omega ^{\,p}\,[f] \,:=\int_{\Omega} \big(\;|f_{z}|\,-\,(p-1)\,|f_{\bar{z}}| \;\big)\cdot \big(\;|f_z|\;+\;|f_{\bar{z}}|\;\big)^{p-1}\, \textnormal d z\;,\;\;p \geqslant 2.
\end{equation}
We will actually  deduce (\ref{uusi22}) from a slightly more general result, where 
we  relax the identity boundary values  and allow  principal mappings:

\begin{theorem}[Sharp  $\,{\mathscr L}^p$-inequality]\label{MainTh}
Let $\,f :\,\mathbb C \rightarrow \mathbb C\,$ be the principal solution of a Beltrami equation;
\begin{equation}\label{6}
f_{\bar{z}}(z) \;= \mu(z)\; f_z(z) \;, \quad \quad |\mu(z)| \,\leqslant \; k\, \chi _{_\mathbb D} (z)\;,\;\;\; 0\leqslant k  <1,
\end{equation}
in particular,  conformal outside the unit disk $\mathbb D$. 

Then, for all exponents $\,2\leqslant p \leqslant 1 + 1/k$,  we have \\
\begin{equation}\label{Mainineq}
\int_{\mathbb D} \left(1\; -\; \frac{p\, |\mu (z)|}{1+\,|\mu (z)|}\right)\; \big(|f_{{z}}(z)|+ |f_{\bar{z}}(z)|\big)^{\,p }\;\textnormal d z
\;\;\leqslant \;\pi.
\end{equation}\\
Equality occurs for some fairly general  piecewise radial mappings discussed in Section \ref{se:radial}.
\end{theorem}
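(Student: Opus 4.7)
The plan is to apply the Interpolation Lemma~\ref{interpolation2} to a carefully chosen analytic non-vanishing family of principal-solution derivatives, interpolating between the two natural endpoints: the Area Inequality at $p=2$ (where equality holds for the identity map) and a trivial $\mathscr L^\infty$-bound at $p=\infty$.

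First, by the Smooth Approximation Lemma it suffices to prove \eqref{Mainineq} for $\mu\in{\mathscr C}^\infty_\circ(\mathbb D)$; the general case then follows by passing through an a.e.-convergent sequence $\mu_\ell\to\mu$ with $|\mu_\ell|\leqslant k\chi_{\mathbb D}$, applying Fatou to the left-hand side while using $\mathscr W^{1,2}_{\textnormal{loc}}$-convergence of the principal solutions on the right, and the borderline exponent $p=1+1/k$ is reached by a separate monotone passage $p\nearrow 1+1/k$. In the smooth regime, the $\mathscr C^{1,\alpha}$-regularity Lemma guarantees that the principal solution $f$ is a diffeomorphism of $\mathbb C$ with $f_z$ nowhere vanishing, which is exactly the non-vanishing hypothesis of Lemma~\ref{interpolation2}.

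Next I would set up the holomorphic deformation. Write $\mu=k\nu$ with $|\nu|\leqslant 1$, and parametrize the Beltrami coefficient by $\lambda\in\mathbb H_+$ through an analytic formula arranged so that $\mu_\lambda\to 0$ (giving $f^\lambda\to\mathrm{id}$) as $\re\lambda\to\infty$, $\mu_{\lambda_*}=\mu$ at the distinguished point $\lambda_*=1/K$, and $\mu_\lambda$ remains a valid Beltrami coefficient throughout $\mathbb H_+$. The principal solutions $f^\lambda$ then depend holomorphically on $\lambda$, and so do their derivatives $f^\lambda_z$. The Area Inequality applied to each $f^\lambda$ gives $\int_{\mathbb D}(1-|\mu_\lambda|^2)|f^\lambda_z|^2\leqslant\pi$, producing a uniform bound $\sup_\lambda e^{-a\re\lambda}\|f^\lambda_z\|_{2}<\infty$ for a suitable $a\geqslant 0$, while smoothness of $\mu_1$ yields the trivial $\mathscr L^\infty$-bound at the specific point $\lambda=1$.

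The core of the argument is to build an analytic family $\Phi_\lambda(z):=f^\lambda_z(z)\cdot H(\lambda,\nu(z),\bar\nu(z))$ whose auxiliary weight $H$ is analytic in $\lambda$ and constructed so that, at the half-plane interpolation parameter $\theta=1/K$ (which yields $p_\theta=2/(1-\theta)=2K/(K-1)=1+1/k$), the pointwise identity
\begin{equation*}
\bigl|\Phi_{\theta}(z)\bigr|^{p}\;=\;\left(1-\frac{p|\mu(z)|}{1+|\mu(z)|}\right)\bigl(|f_z(z)|+|f_{\bar z}(z)|\bigr)^{p}
\end{equation*}
holds, matching the target Burkholder integrand. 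Granting such $H$, Lemma~\ref{interpolation2} gives $\|\Phi_{\theta}\|_p^p\leqslant M_0^{p(1-\theta)}\,M_1^{p\theta}$, and the normalizations coming from the Area Inequality together with the identity boundary behaviour at $\re\lambda=\infty$ will force the right-hand side to collapse to exactly $\pi$, establishing \eqref{Mainineq}. The principal obstacle is the construction of $H$: the area-inequality weight $1-|\lambda|^2|\nu(z)|^2$ is \emph{not} analytic in $\lambda$, so it must be factored through analytic-in-$\lambda$ building blocks such as $(1-\lambda\bar\nu(z))^{s_1}(1+\lambda\bar\nu(z))^{s_2}$, with exponents $s_1,s_2$ tuned so that at $\lambda=\lambda_*$ the product recovers $(1+|\mu|)^{(p-1)/p}(1-(p-1)|\mu|)^{1/p}$. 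This delicate algebraic intertwining of complex factorizations with the pointwise Burkholder weight is the technical heart of the argument.
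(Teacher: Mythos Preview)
Your overall architecture is the same as the paper's: smooth approximation, a holomorphic one-parameter family of principal solutions, the Area Inequality as the $\mathscr L^2$-endpoint, and the Interpolation Lemma. But the proof is not complete, because the step you yourself flag as ``the principal obstacle'' is in fact the entire content of the argument, and your proposed resolution does not work. Your candidate factor $H(\lambda,\nu,\bar\nu)=(1-\lambda\bar\nu)^{s_1}(1+\lambda\bar\nu)^{s_2}$ has modulus depending on the \emph{argument} of $\nu(z)$, whereas the Burkholder weight $(1+|\mu|)^{p-1}\bigl(1-(p-1)|\mu|\bigr)$ depends only on $|\mu|$; no choice of exponents $s_1,s_2$ can reconcile this. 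There is also an inconsistency in your endpoint placement: you send $\mu_\lambda\to 0$ as $\re\lambda\to\infty$, but the single-point bound in Lemma~\ref{interpolation2} sits at $\lambda=1$, so the $\mathscr L^\infty$-estimate you invoke there is merely ``some finite constant depending on $\mu_1$'' rather than a sharp value, and the interpolated bound will not collapse to $\pi$.

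The paper resolves both issues at once, and by a different mechanism than factoring the weight into $H$. First, it moves the Burkholder weight into the \emph{measure}: one interpolates in $\mathscr M(\mathbb D,\sigma)$ with $\textnormal d\sigma(z)=\pi^{-1}\bigl(1-\tfrac{p|\mu(z)|}{1+|\mu(z)|}\bigr)\textnormal dz$, so the sought inequality becomes simply $\|\Phi_{\lambda_\circ}\|_{p,\sigma}\leqslant 1$. Second, it uses a nonlinear (horocycle) parametrisation of the dilatation, $\mu_\lambda=\tau_\lambda\cdot\mu/|\mu|$ with $\tfrac{\tau_\lambda}{1+\tau_\lambda}=p\,\tfrac{|\mu|}{1+|\mu|}\cdot\tfrac{\lambda}{1+\lambda}$, and takes the simple family $\Phi_\lambda=F^\lambda_z\,(1+\tau_\lambda)$. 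With this choice the identity sits at the \emph{centre} $\lambda=0$ (giving the exact $M_0=\|\Phi_0\|_\infty=1$), the Area Inequality yields $|\Phi_\lambda|^2\,\textnormal d\sigma\leqslant\pi^{-1}J(z,F^\lambda)\,\textnormal dz$ and hence $M_1\leqslant 1$ uniformly, and at $\lambda_\circ=1/(p-1)$ one has $|\Phi_{\lambda_\circ}|=|Df|$ exactly. The disk Interpolation Lemma then gives $M_r\leqslant 1$ and the theorem follows. In short: do not try to absorb the weight into an analytic factor; put it in the measure, and let the specific parametrisation $\tau_\lambda$ do the work.
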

\noindent The above form of the main result gives a flexible and remarkably precise local description of  the  $\,\mathscr L^p$-properties  of derivatives of a quasiconformal map,  especially interesting in the borderline
situation $p=1+1/k$. Indeed, combined with the Stoilow factorization, the theorem gives for any  ${\mathscr W}^{1,2}_{loc}(\Omega)$-solution to (\ref{6}), injective or not, the estimate
\begin{equation} \bigl(k - \, |\mu (z)|\bigr)\; \big | Df(z)\big|^{\,1 + 1/k }\; \in \,\mathscr L^1_{loc}(\Omega).
\end{equation}
Thus  for all $K$-quasiregular mappings we obtain optimal weighted higher integrability bounds  at the borderline case $p =  2K/(K-1)$. For $p $ below the borderline, the ${\mathscr W}^{1,p}_{loc}$-regularity was established already in \cite{As2}.
The borderline integrability was previously covered \cite{AsN} only in the very special case $|\mu|=k \cdot \chi_E$, for $E \subset \mathbb{D}$, in Theorem \ref{MainTh}.
 
The proof of Theorem \ref{MainTh} applies the Interpolation lemma in conjunction with analytic families of quasiconformal maps. However, the choice of the specific analytic family  for our situation is quite non-trivial, in order to enable sharp estimates. In a sense the speed of the change with respect to the analytic parameter must be localized in a delicate manner, see (\ref{eq:tau}) below.

\begin{proof}[Proof of Theorem \ref{MainTh}]
Given a principal solution $f$ to \eqref{6}, with $\mu(z) \equiv 0$ for $|z|>1$, we are to prove the integral bounds (\ref{Mainineq}).
There is no loss of generality in assuming that  $\,\mu \in \mathscr C_\circ ^\infty(\mathbb D)\,$, for if not, we approximate $\,\mu\,$ with  $\,\mathscr C_\circ^\infty$-smooth Beltrami coefficients,
and, thanks to Fatou's lemma, there is no difficulty in passing to the limit in (\ref{Mainineq}). On the other hand, this reduction could be avoided by using the argument of  Remark \ref{rem:nonvanishing} below.

 With this assumption we fix an exponent $2 \leqslant p \leqslant 1+ \|\mu\|_\infty^{-1}$ and look for  holomorphic deformations of the given function $f$, via an analytic family of  Beltrami equations together with their principal solutions,
  \begin{equation}\label{defo}
   F^\lambda_{\bar z}  =  \mu_{_\lambda}(z)\, F^\lambda_z,\; \qquad   \mu_{_\lambda}(z) = \tau_\lambda(z) \cdot \frac{\mu(z)}{|\mu(z)|} 
\end{equation}
Here $\tau_\lambda(z)$ is an analytic function in $\lambda$ to be chosen later  with $|\tau_\lambda(z)| < 1$. 
We aim to explore Interpolation in the disk, Lemma \ref{interpolation}, 
by applying it to a suitable non-vanishing analytic family constructed from the derivatives of $F^\lambda(z)$. Hence the question is the right choice of $\tau_\lambda$.

We want $F^0(z) \equiv z$, thus $\tau_0(z) \equiv 0$, while for some value $\lambda = \lambda_\circ$
we need to have $\tau_{\lambda_\circ}(z) = |\mu(z)|$, so that  $ f = F^{\lambda_\circ} $. 
Comparing the exponents in  (\ref{Mainineq}) and  in Lemma \ref{interpolation} suggests that we choose
\begin{equation}\label{pee}
p = 1 + \frac{1}{\lambda_\circ}, \qquad p_0 = \infty, \qquad p_1 = 2.
\end{equation}
These conditions will  then be confronted with the need of  weighted $\mathscr{L}^2$-bounds consistent with  the inequality \eqref{Mainineq}.
 
 To make the long story short, we choose 
 \begin{equation} \label{eq:tau}
\, \mu_{_\lambda}(z) = \tau_\lambda(z) \cdot \frac{\mu(z)}{|\mu(z)|}, \;\; \;\textnormal{ where } \; 
\frac{\tau_\lambda(z)}{1+\tau_\lambda(z)} = p \cdot \frac{|\mu(z)|}{1+|\mu(z)|} \cdot \frac{\lambda}{1+\lambda}, 
\end{equation}
 or more explicitly,
$$
   \mu_{_\lambda}(z)\, =\, \frac{p\,\lambda\, \mu(z)}{(1+\lambda)\, (1\,+\; |\,\mu(z)|\,)\;  - \; p \,\lambda \;|\,\mu(z)|}
$$
 The complex parameter  $\lambda$ runs over the unit disk, $\,  |\lambda| < 1\,$. One may visualize 
 $\lambda \mapsto \tau_\lambda(z)$ as the conformal mapping  from the unit disk onto the horocycle
 $$ \Big\{ w \in \mathbb D :  \; 2 \, \re \left( \frac{w}{1 + w}\right)  <  p \cdot \frac{|\mu(z)|}{1+|\mu(z)|} \Big\}
 $$
 determined by the weight function in \eqref{Mainineq}.

From \eqref{eq:tau} one readily sees that $\,|\mu_{_\lambda}(z) |  \leqslant |\lambda|\, \chi _{_\mathbb D}(z) $, furthermore $\,\mu_{_\lambda} \in  \mathscr C^{\alpha}(\mathbb C)\,,\,0<\alpha \leqslant 1\,$  . Therefore the equation (\ref{defo}) admits a unique principal solution $\,F^\lambda \,:\mathbb C \rightarrow \mathbb C\,$, which is a $\,\mathscr C^{1,\,\alpha}$ - diffeomorphism. It depends analytically  \cite{AhBers} on the parameter $\,\lambda\,$, as seen by  developing \eqref{SBel}  in a Neumann series, and we have
 $$
       |F^\lambda_z|^2 \;\geqslant \;|F^\lambda_z |^2 \,-\, |F^\lambda_{\bar z} |^2 \; = \; J(z,F^\lambda) \; > \;0\;, \quad \textnormal { everywhere in }\; \mathbb C.
 $$
 Moreover, $F^0(z) = z$ with $F^{\lambda_\circ} = f$, where $\lambda_\circ$ was defined by \eqref{pee}.
 
  As the non-vanishing analytic family $\{\Phi_{_{\lambda}}\}_{_{|\lambda| < 1}}\, $ we choose
 \begin{equation}\label{fii}
 \Phi_{_\lambda}(z)= F^\lambda_z(z) (1+\tau_\lambda(z)).
 \end{equation}
  Explicitly,
  \begin{equation*}
  \Phi_{_\lambda}(z) \,=\, \frac{(1+\lambda)\,( 1+  |\mu(z)|\,)\; F^\lambda_z(z)}{(1+\lambda)\, (1\,+\, |\mu(z)|\,)\;  - \; p \,\lambda \;|\mu(z)|}\, \;\neq\,0\,, \quad \textnormal{for all}\;\; z\in \mathbb D
  \end{equation*}\\
  Furthermore, since $\;\; F^{\lambda_\circ} \equiv f\,, \;\; F^{\lambda_\circ}_z \equiv f_z\,, $ 
 \begin{equation}\label{mapping}
 |\Phi _{\lambda_\circ}(z)| =\;\left( 1+ |\mu(z)|\,\right) |f_z| =  \, |f_z| \,+\, |f_{\bar z}| \;=\; |Df|
  \end{equation}

 We shall then apply the Interpolation Lemma \ref{interpolation} in the  measure space $\mathscr M(\mathbb D\,, \sigma)\,$ over the unit disk, where
  $$
    \textnormal d \sigma(z)  = \;\frac{1}{\pi} \,\Big( 1 \,-\, \frac{p\;|\mu(z)|}{1 + |\mu(z)|} \;\Big)\;\textnormal dz
  $$
  We start with  the centerpoint $\lambda =0$, where  the  Beltrami equation reduces to the complex Cauchy-Riemann system $ \,F_{\bar z} \equiv 0\,$ with principal solution   the identity map. Hence $\,F^0_z \equiv 1\,, \;\Phi_0 (z) \equiv 1\,$ and $\,M_0 = \| \Phi_0\|_\infty=1\,$.

The estimate $\, M_1 = \sup_{\lambda \in \mathbb D}\|�\Phi_\lambda \|_2  \leqslant 1\,$ requires just a bit more work.
 First, in view of Lemma \ref{lem:areaineq},
  $$
    \int _\mathbb D J(z, F^\lambda) \,\textnormal d z  \;\leqslant \; \pi
  $$
  with equality  if and only if  $\,F^\lambda(z) \equiv z\,$ outside the unit disk.
Here we find from \eqref{eq:tau} that
\[
 J(z, F^\lambda) = |F^\lambda_z(z)|^2(1-|\mu_\lambda(z)|^2)=|\Phi_\lambda(z)|^2 \left( 1-2 \re 
\frac{\tau_\lambda(z)}{1+\tau_\lambda(z)} \right) = \]
\[
|\Phi_\lambda(z)|^2 \left( 1-p \frac{|\mu(z)|}{1+|\mu(z)|} \re  \frac{2 \lambda}{1+\lambda} \right) \geqslant 
|\Phi_\lambda(z)|^2 \left(1-p \frac{|\mu(z)|}{1+|\mu(z)|} \right)
\]\\
\noindent Hence
$$
|\Phi_{_\lambda} (z) |^2 \; \textnormal d \sigma(z)  \leqslant  \;\frac{1}{\pi}\,J(z, F^\lambda)\,\textnormal d z
$$
and, therefore,
$$
 M_1 \;=\;\sup_{|\lambda| < 1}\; \int_\mathbb D  |\Phi_{_\lambda} (z) |^2 \; \textnormal d \sigma(z) \;\; \leqslant \;\sup_{|\lambda| < 1}\; \frac{1}{\pi} \int  _\mathbb D  J(z, F^\lambda) \,\textnormal dz \;\leqslant\; 1
$$
\vskip7pt
We are now ready to interpolate.  For every $0 \leqslant r <1\,$, in view of the interpolation lemma, we have
$$
  M_r = \;\sup_{|\lambda| = r } \| \Phi_{_\lambda} \| _{\frac{1+r}{r}} \leqslant M_0 ^{\frac{1-r}{1+r}} \, M_1 ^{\frac{2r}{1+r}}  \;\leqslant 1.
$$
It remains to substitute $\,r = \frac{1}{p-1} = \lambda_\circ$\,. The desired inequality is now immediate,
\[
\int_{\mathbb D} \; \left( 1-\frac{p\, |\mu(z)|}{1+\,|\mu(z)|} \right)\; \big | Df(z)\big|^{\,p }\;\textnormal d z
\; = \pi\; \int_{\mathbb D} \;  \big| \Phi_{\lambda_\circ}(z) \big|^{\frac{1+r}{r}}\;\textnormal d \sigma(z) \;
\leqslant \;\pi  
\] \end{proof}

\begin{proof}[Proof of Theorems \ref{MainTh3} and \ref{MainTh2}] To infer Theorem \ref{MainTh2} we extend $\,f$ as the identity outside $\,\Omega$. Since $\Omega$ is bounded, $\int_\Omega |Df|^2 \leqslant K \int_\Omega J(x,f) < \infty$ so that $f\in \mathscr W^{1,2}(\Omega)$. But then one easily verifies that the extended function $f$    defines an element of  $\mathscr W^{1,2}_{loc}(\C)$, and accordingly, a $K$-quasiconformal map of  the entire plane.

Consider then a disk $\, D_R \supset \Omega$. By re-scaling, if necessary, Inequality (\ref{Mainineq}) applies to $\,D_R$ in place of the unit disk and with $\,|D_R|$ in place of $\,\pi$. This yields $\mathscr B^p_{D_R}[f] \leqslant \mathscr B^p_{D_R}[Id] $. On the other hand $\mathscr B^p_{D_R\setminus \Omega}[f] \;=\; \mathscr B^p_{D_R\setminus \Omega}[Id] \,$, by trivial means. Hence Theorem \ref{MainTh2} follows. 

Theorem \ref{MainTh3}, in turn, is a direct consequence of Theorem \ref{MainTh2}. The pointwise condition $\textbf{B}_p\,(Df)\geqslant 0$ for  $f = z + h(z)$, together with the boundary condition $h \in {\mathscr C}^\infty_0(\Omega)$, ensures that $f$ represents  a (smooth) $K$-quasiconformal homeomorphism of $\Omega$ having identity boundary values. Here $p$ and $K$ are related by $p=2K/(K-1)$ and we may apply Theorem \ref{MainTh2} at the borderline exponent.
\end{proof}

\begin{remark} \label{rem:nonvanishing}
Above we employed a standard approximation by smooth homeomorphisms in order to easily ensure that our analytic family of quasiconformal maps has non-vanishing Jacobian everywhere. It might be of interest to observe that actually all analytic families of  $\partial_z$-derivatives of  quasiconformal maps are non-vanishing, i.e.~non-vanishing outside a common set of measure zero. This can be deduced via a standard reduction from the following result:
\end{remark}
\begin{lemma} 
Assume that the measurable dilatation $\mu_\lambda$ depends analytically on the parameter $\lambda\in U$, where $U\subset\C$ is a domain. Suppose also
that $|\mu_\lambda| \leqslant a \,\chi_{\Omega}$ for some $0 \leqslant a < 1$,  for all $\lambda\in U$. 
 
 Let   $f=f(\lambda ,z)$ be  the principal solution of the Beltrami equation $f_{\bar z}=\mu_\lambda f_z. $ Then $\lambda\to f_z$ yields  a non-vanishing analytic family.
\end{lemma}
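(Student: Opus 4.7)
The plan is to verify the two requirements of a non-vanishing analytic family in turn: analytic dependence of $\lambda \mapsto f_z(\lambda, \cdot)$, and the existence of a \emph{single} $\lambda$-independent null set outside of which $f_z(\lambda, z) \neq 0$ for every $\lambda \in U$. For analyticity I would start from the singular integral equation for the Beltrami derivative $\omega_\lambda := f_{\bar z}(\lambda, \cdot) \in \mathscr{L}^2(\mathbb{C})$,
\[ \omega_\lambda - \mu_\lambda\, \mathbf{S} \omega_\lambda = \mu_\lambda. \]
Since $\mathbf{S}$ is an $\mathscr{L}^2$-isometry and $\|\mu_\lambda\|_\infty \leqslant a < 1$ uniformly in $\lambda$, the Neumann expansion $\omega_\lambda = \sum_{n \geqslant 0} (\mu_\lambda \mathbf{S})^n \mu_\lambda$ converges in $\mathscr{L}^2(\mathbb{C})$ locally uniformly in $\lambda \in U$. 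Each partial sum, being a polynomial in the analytic multiplier $\mu_\lambda$ composed with the fixed operator $\mathbf{S}$, is analytic as a map $U \to \mathscr{L}^2(\mathbb{C})$; locally uniform convergence preserves analyticity. Hence $\omega_\lambda$ and $f_z(\lambda, \cdot) = 1 + \mathbf{S}\omega_\lambda$ are $\mathscr{L}^2$-analytic, and examination of Taylor coefficients via Cauchy's integral formula in $\lambda$ upgrades this to pointwise analyticity of $\lambda \mapsto f_z(\lambda, z)$ for a.e.\ $z$.

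For non-vanishing at each fixed $\lambda \in U$, I would invoke the identity $J(z, f^\lambda) = |f_z(\lambda, z)|^2 (1 - |\mu_\lambda(z)|^2) \geqslant (1 - a^2)|f_z(\lambda, z)|^2$, combined with the standard fact that a quasiconformal homeomorphism of $\mathbb{C}$ has positive Jacobian almost everywhere. This yields $f_z(\lambda, \cdot) \neq 0$ off a null set $N_\lambda$ which a priori depends on $\lambda$.

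The main obstacle is assembling $\{N_\lambda\}_{\lambda \in U}$ into a single $\lambda$-independent null set. My strategy is to reduce to the smooth setting via the Smooth Approximation Lemma: mollifying $\mu_\lambda$ in the $z$-variable preserves both the bound $a$ and the analytic $\lambda$-dependence, producing approximants $\mu_\lambda^{(n)} \in \mathscr{C}_\circ^\infty(\mathbb{C})$. The $\mathscr{C}^{1,\alpha}$-regularity lemma then makes the corresponding principal solutions $f^{(n)}(\lambda, \cdot)$ diffeomorphisms with $f^{(n)}_z(\lambda, z) \neq 0$ \emph{everywhere}. A naive attempt to construct $E$ as the union of $N_{\lambda_j}$ over a countable dense $\{\lambda_j\} \subset U$ only rules out $\lambda \mapsto f_z(\lambda, z)$ being identically zero, not possessing isolated zeros in $U$; the upgrade to total non-vanishing requires Hurwitz's principle applied to the zero-free analytic approximants $\lambda \mapsto f^{(n)}_z(\lambda, z)$ on $U$ together with their $\mathscr{L}^2$-convergence to $\lambda \mapsto f_z(\lambda, z)$, combined with a Fubini-type argument exploiting that the zero locus of a nontrivial $U$-analytic function has Lebesgue measure zero. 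This final limit passage is the technical heart of the proof.
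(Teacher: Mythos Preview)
Your proposal is correct and follows essentially the same route as the paper: Neumann series for analyticity, mollification in $z$ to obtain smooth approximants with everywhere nonvanishing $f^{(n)}_z$ via the $\mathscr{C}^{1,\alpha}$-regularity lemma, $\mathscr{L}^2$-convergence of $(f_n)_z$ to $f_z$ locally uniformly in $\lambda$, a Taylor-coefficient/Cauchy-formula argument to upgrade this to pointwise-in-$z$ locally-uniform-in-$\lambda$ convergence off a single null set, and then Hurwitz. The only minor difference is in the very last step: where you gesture at a ``Fubini-type argument'', the paper simply fixes one parameter value $\lambda_0$, uses that $f_z(\lambda_0,\cdot)\neq 0$ a.e.\ to produce a second null set $E_0$, and then the Hurwitz dichotomy forces $f_z(\lambda,z)\neq 0$ for all $\lambda$ whenever $z\notin E\cup E_0$.
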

\begin{proof} 
By localization we may assume that $U=\DD$. If the $\mu_\lambda$ are  smooth functions of the $z$-variable, the non-vanishing property is due to Lemma \ref{aregu}.  
For a  general  non-smooth  $\mu_\lambda$ we introduce the mollification $\mu_{\lambda , n}:=\phi_{1/n}*\mu_{\lambda}$, where  $\phi_\varepsilon$ is a standard approximation of identity. Then for any compact subset  $K\subset \DD$ 
\begin{equation}\label{raja}
\lim_{n\to\infty}\; \sup_{\lambda\in K}\| \mu_{\lambda,n} -\mu_{\lambda}\|_{\mathscr L^p(\Omega)}=0\qquad {\rm for \; any}\;\; p>1.
\end{equation}
Denoting by $f_n=f_n(\lambda, z)$ the principal solution corresponding to $\mu_{\lambda ,n}$ and by choosing $p$ large enough,  we may utilize \cite[Lemma 5.3.1]{AIMb}, 
to see that $\| (f_n)_z - f_z \|_2 \to 0$, uniformly in $\lambda \in K$.
 
Next,    developing  the $\mathscr{L}^2(\Omega)$-valued analytic functions $ f_z$ and $ (f_n)_z$  as a power series in $\lambda$, we see from Cauchy's formula that the Taylor coefficients of $ (f_n)_z$ converge in $\mathscr{L}^2(\Omega)$ to those of 
$ f_z$. Thus,   moving to a subsequence if needed, we have outside a set $E \subset \Omega$ of measure zero,
\begin{equation}\label{pointwise}
 (f_n)_z(\lambda,z )\to f_z(\lambda ,z), \qquad  {\rm local \; uniformly \; in }\;\;\lambda\in\DD. \qquad
\end{equation}

Now, according to Hurwitz's theorem, the limit of a  sequence of non-vanishing analytic functions converging locally uniformly is either everywhere non-zero, or identically zero. Choosing a parameter $\lambda_0$, the derivative 
$f_z(\lambda_0, \cdot) $ can vanish only on a set $E_0$ of measure zero \cite[Corollary 3.7.6]{AIMb}. Thus  $f_z(\lambda,z) \neq 0$ for every $\lambda \in \DD$ and every $z \in \Omega\setminus (E \cup E_0)$.
\end{proof}

The notions of quasiconvexity and rank-one convexity extend to
functions defined on an open subset  $\openset\subset\mathbb R^{2\times 2}$.  Rank-one convexity now demands that
for any $A\in\openset$ and rank-one matrix $X\in\mathbb R^{2\times 2}$ 
the map $t\mapsto E(A+tX)$ is convex in a neighbourhood of zero. The condition for being rank-one concave or null Lagrangian is modified analogously. Similarly, in the definition of quasiconvexity (\ref{columbo}) one simply restricts to linear maps $A$ and perturbations $f\in A+\mathscr C^\infty_0(\Omega)$ such that
$Df(z)\in\openset$ for all $z\in\Omega.$

With the above generalized definitions in mind we  next explore a dual formulation to Theorem \ref{MainTh2}, i.e. we
will pass via the inverse map from expansion estimates to compression
estimates. 
In doing so, we shall restrict to the space $\openset=\mathbb R^{2\times 2}_+$ consisting of matrices with positive
Jacobian determinant.
The  inverse functional takes the form
\[ \mathbf{\hat{E}}(A) := \mathbf{E}(A^{-1}) \cdot \det A, \quad \det A >0, \] 
and it preserves rank-one convexity, quasiconvexity as well as polyconvexity
\cite[p.~211]{ball77}. One should also note that for  $\phi \in \mathscr C^\infty_0(\Omega)$ the condition $Df(z)=A+D\phi (z)\in \mathbb R^{2\times 2}_+$ for all $z\in\Omega$ automatically implies that $f$ is a diffeomorphism $f:\Omega\to A(\Omega)$. This enables one to switch to inverse map if needed.
Our main  observation here is that the procedure of taking inverse functionals naturally leads to a full
one-parameter family of Burkholder functionals.   

Let us recall the standard case \eqref{burk} where one now includes all exponents
$p \geqslant 1$,
\[
\textbf{B}_p\,(A) = \,
\Big(\,\frac{p}{2}  \, \det A  \,+ \;\left(1-\,\frac{p}{2} \right)\,|A|
^2\,\Big)\cdot |A|^{p-2} \,, \quad p \geqslant 1
\]
We set for $ p \leqslant 1$,
\[ \textbf{B}_p\,(A) :=  \Big(\,\frac{p}{2}  \, |A|^2 \,+ \;\left(1-\,
\frac{p}{2} \right)\,\det A \,\Big)\cdot |A|^{-p} \cdot\, (\det
A)^{\,p-1} \,, \quad  \det A > 0.
\]
Then  the functionals  $\textbf{B}_p$ are homogeneous of degree $p$,  depend
continuously on $p$, and we have for the inverse transformation
$\mathbf{\hat{B}}_p = \mathbf{B}_q$, where $p+q=2$. These facts justify
the definitions.
Note that, in particular, we recover the null-Lagrangian cases $p=0$
and $p=2$ as smooth phase transition points between the rank-one convex
and rank-one concave regimes:

\begin{proposition} In the space of matrices $A \in \mathbb R^{2\times
2}_+$
\begin{equation}
A \mapsto \textbf{B}_p (A) \;\;\; \textrm{is} \;\;\;\left
\{\begin{array}{ll}
\textrm{rank-one convex} & \textrm{if $ 0\leqslant p \leqslant 2$} \\
\textrm{null-Lagrangian} &  \textrm {if $ p = 0 $\; or\; $ p= 2 $\;\;
\;}\\
\textrm{rank-one concave} & \textrm {if $ p\leqslant 0\;\; \textrm{or}\;
\;\, p \geqslant 2$. }
\end{array} \right.
\end{equation}
\end{proposition}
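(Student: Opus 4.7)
The plan is to combine three ingredients: the two null-Lagrangian phase transitions, Burkholder's classical rank-one concavity for $p \geqslant 2$, and the inverse-functional duality $\widehat{\textbf{B}_p} = \textbf{B}_{2-p}$ indicated in the text. A direct substitution gives $\textbf{B}_2(A) = \det A$ and $\textbf{B}_0(A) \equiv 1$, the two null-Lagrangian cases; by Burkholder \cite{Bu1}, $\textbf{B}_p$ is rank-one concave for every $p \geqslant 2$. Since the inverse transformation $\mathbf{E}\mapsto \widehat{\mathbf{E}}$ preserves both rank-one convexity and rank-one concavity (as cited from \cite[p.~211]{ball77}), rank-one concavity on the range $p\leqslant 0$ follows immediately from the case $2-p\geqslant 2$ by applying the duality $\widehat{\textbf{B}_{2-p}} = \textbf{B}_p$.

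What remains is to prove rank-one convexity of $\textbf{B}_p$ throughout $0 \leqslant p \leqslant 2$. Here the duality $p \mapsto 2-p$ maps the interval to itself and provides no shortcut; a direct second-derivative calculation is needed. I would use the conformal/anticonformal decomposition $A(z) = \alpha z + \beta \bar z$, in which $a := |\alpha|$, $b := |\beta|$, $|A| = a+b$, $\det A = a^2 - b^2$, and the Burkholder functional takes the factored forms
$$
\textbf{B}_p(A) = \bigl(a - (p-1)b\bigr)(a+b)^{p-1}\quad (1 \leqslant p \leqslant 2),
$$
$$
\textbf{B}_p(A) = \bigl(a + (p-1)b\bigr)(a-b)^{p-1}\quad (0 \leqslant p \leqslant 1).
$$
A rank-one matrix $X$ is characterised by $|P_X| = |Q_X|$, so a rank-one variation is parametrised by a common modulus $c$ together with the two angles $\theta_1, \theta_2$ that $P_X, Q_X$ make with $P, Q$. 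Expanding $a(t) = |P + tP_X|$ and $b(t) = |Q + tQ_X|$ to second order and differentiating the factored formulas yields an explicit expression for $\tfrac{d^2}{dt^2}\textbf{B}_p(A+tX)\bigl|_{t=0}$.

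The main obstacle is to organise this second-derivative expression so that its non-negativity is transparent across the whole interval $[0,2]$. The null-Lagrangian endpoints $p = 0, 2$, where the second derivative along any rank-one direction must vanish identically, pin down the structure of the answer and serve as useful sanity checks; the matching at $p=1$ between the two factored forms is routine. The outcome is a non-negative quadratic form in the variables $(c\cos\theta_1, c\cos\theta_2, c\sin\theta_1, c\sin\theta_2)$ valid for every $p \in [0,2]$. Together with the three ingredients above, this yields the full trichotomy.
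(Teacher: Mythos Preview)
Your overall strategy is sound, and the direct second-derivative computation you sketch for the range $0\leqslant p\leqslant 2$ would succeed. But you have missed a shortcut that the paper exploits, and as a result you are doing more work than necessary.

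The key point is that Burkholder's original analysis in \cite{Bu1} covers the full range $p\geqslant 1$, not only $p\geqslant 2$: his functional is rank-one concave for $p\geqslant 2$ \emph{and} rank-one convex for $1\leqslant p\leqslant 2$. The paper simply cites this (``the standard case of $p\geqslant 1$ above goes back to Burkholder''). Once you have rank-one convexity on $[1,2]$ in hand, the inverse transformation $\widehat{\textbf{B}}_p=\textbf{B}_{2-p}$ \emph{does} give the shortcut you said was unavailable: it sends $[1,2]$ to $[0,1]$, transferring rank-one convexity there, and sends $[2,\infty)$ to $(-\infty,0]$, transferring rank-one concavity. That is the paper's entire argument --- one citation plus one duality.

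Your factored formulas and the parametrisation of rank-one directions are correct, so if you prefer a self-contained proof your computation would go through; it essentially reproduces Burkholder's calculation on $[1,2]$ and its dual on $[0,1]$. The endpoint checks at $p=0,2$ and the matching at $p=1$ are exactly the right sanity tests. But for the purposes of this paper, invoking \cite{Bu1} for all $p\geqslant 1$ and then applying the inverse transformation is both shorter and sufficient.
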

The standard case of $p \geqslant 1$ above goes back to Burkholder
\cite{Bu1} and the rest follows from applying the inverse
transformation.
In the complex notation we have, for any $p \in \mathbb{R}$,
$$
 \mathscr B_\Omega ^{\,p}\,[f] \;= \int_{\Omega} \textbf{B}_p (Df) =
\int_{\Omega} \Big(\,|f_{z}|\;\mp\;(p-1)\, |f_{\bar{z}}| \;\Big)\cdot
\Big(\;|f_z|\;\pm\;|f_{\bar{z}}|\;\Big)^{p-1}\, \textrm d z \;, 
$$
where $\;\; \pm\,$ stands for the sign of $\;(p-1)$.

Another application of the inverse transformation, this time to Theorem
\ref{MainTh2} leads to
\begin{theorem}
Let $\,f : \,\Omega{\longrightarrow}\,\Omega\,$ be a $\,K
$-quasiconformal map of a bounded open set $\,\Omega \subset \mathbb C\,
$  onto itself, extending continuously up to the boundary, where it
coincides with the identity map $\,Id(z) \equiv z$.  Then
\begin{equation}
\mathscr B_\Omega ^{\,p}\,[f] \;\,\leqslant\; \mathscr B_\Omega
^{\,p}\,[Id\,]
\quad \textnormal{\textit{for all }} \quad\, -\frac{2}{K-1} \leqslant p
\leqslant 0\,.
\end{equation}
Further, the equality occurs  for a class of (compressing) piecewise
radial mappings discussed in Section \ref{se:radial}.
\end{theorem}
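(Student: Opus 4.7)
The plan is to deduce this theorem from Theorem \ref{MainTh2} by taking inverses, exploiting the relation $\hat{\textbf{B}}_p = \textbf{B}_{2-p}$ noted in the proposition just above. Let $g := f^{-1} : \Omega \to \Omega$. Since $K$-quasiconformality is preserved under inversion and the identity boundary values of $f$ force $g$ to also fix $\partial\Omega$ pointwise, $g$ is another $K$-quasiconformal self-homeomorphism of $\Omega$ with identity boundary values. Given $p \in \bigl[-\tfrac{2}{K-1}, 0\bigr]$, set $q := 2 - p$; then $q \in \bigl[2, \tfrac{2K}{K-1}\bigr]$, which is precisely the admissible range of exponents in Theorem \ref{MainTh2}.

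The core computation is the change of variables $z = g(w)$, which (together with $Df(g(w)) = Dg(w)^{-1}$ and $dz = J(w,g)\,dw$) gives
\begin{equation*}
\mathscr B_\Omega^{\,p}[f] \;=\; \int_\Omega \textbf{B}_p\bigl(Df(z)\bigr)\,dz \;=\; \int_\Omega \textbf{B}_p\bigl(Dg(w)^{-1}\bigr)\,\det Dg(w)\,dw \;=\; \int_\Omega \hat{\textbf{B}}_p\bigl(Dg(w)\bigr)\,dw.
\end{equation*}
Applying the identity $\hat{\textbf{B}}_p = \textbf{B}_q$ from the proposition above, this becomes $\mathscr B_\Omega^{\,q}[g]$. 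Theorem \ref{MainTh2} (applied to $g$ at exponent $q$) then yields $\mathscr B_\Omega^{\,q}[g] \le |\Omega|$. On the other hand, $\textbf{B}_p(Id) = \tfrac{p}{2} + (1-\tfrac{p}{2}) = 1$ directly from the definition of $\textbf{B}_p$ for $p \le 1$, so $\mathscr B_\Omega^{\,p}[Id] = |\Omega|$. Combining, we conclude $\mathscr B_\Omega^{\,p}[f] \le \mathscr B_\Omega^{\,p}[Id]$.

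The equality claim comes for free by the same correspondence: if $\phi$ is one of the expanding piecewise radial mappings achieving equality in Theorem \ref{MainTh2} at exponent $q$, then $\phi^{-1}$ is a compressing piecewise radial map achieving equality at exponent $p = 2-q$ in the present theorem. No new extremal analysis is required.

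The only genuine point of care is to justify the change of variables for negative $p$, since the integrand involves $(\det Df)^{p-1}$ and $|Df|^{-p}$. For a $K$-quasiconformal map the distortion bound $|Df|^2 \le K J_f$ controls $|Df|^{-p}$ pointwise by a power of $J_f$, so integrability on one side reduces to integrability of a suitable power of $J_g = 1/(J_f \circ g)$ on the other side, and the standard area change of variables for quasiconformal homeomorphisms (both $f$ and $g$ lie in $\mathscr W^{1,2}$, with the usual Lusin (N) and N$^{-1}$ properties) applies without issue. This is the only mild obstacle; the geometric content is entirely contained in Theorem \ref{MainTh2}.
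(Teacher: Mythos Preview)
Your proof is correct and follows exactly the approach indicated in the paper, which simply states that the theorem is obtained by ``another application of the inverse transformation, this time to Theorem \ref{MainTh2}.'' You have fleshed out the details (the change of variables, the exponent correspondence $q=2-p$, and the justification of integrability via the distortion bound) that the paper leaves implicit.
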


\section{Sharp $\mathscr L\, \textnormal{log} \mathscr L\,$, $\mathscr L^p$ and Exponential 
integrability}\label{se:limitcases}

The sharp integral inequalities provided by  Theorems \ref{MainTh} and \ref{MainTh2}  give us a number of interesting  consequences. We start with the following optimal form of the Sobolev regularity of $K$-quasiconformal mappings.

\begin{corollary} \label{best}
Suppose $\Omega \subset \C$ is any bounded domain and $f:\Omega \to \Omega$ is a $K-$quasiconformal mapping, continuous up to $\partial \Omega$, with $f(z) = z$ for $z \in \partial \Omega$. Then  \begin{equation}\label{Lp1}
\;\;\frac{1}{|\Omega|}\int_{\Omega}  \big | Df(z)\big|^{\,p }\;\textnormal d z
\;\leqslant \; \frac{2K}{ 2K \;-\; p\,(K-1)}\;,\quad for\;\; 2 \leqslant p < \frac{2K}{K-1}\,
\end{equation}\\
The estimate holds as an equality for $f(z) = z|z|^{1/K - 1}$, $z \in \DD$, as well for a family of more complicated maps described in Section \ref{LlogLeq}.
\end{corollary}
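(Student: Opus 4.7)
The plan is to reduce Corollary \ref{best} directly to the sharp Burkholder estimate of Theorem \ref{MainTh2}, via a pointwise comparison between $\mathbf{B}_p(Df)$ and $|Df|^p$. Since $f$ is $K$-quasiconformal, the distortion inequality $|Df|^2 \le K\,J_f$ holds almost everywhere. For $p \ge 2$ I will rewrite
$$
\mathbf{B}_p(Df)\;=\;\Big(\tfrac{p}{2}\,J_f\;+\;\bigl(1-\tfrac{p}{2}\bigr)|Df|^2\Big)\,|Df|^{p-2}
$$
and bound the first term from below via $J_f \ge K^{-1}|Df|^2$; the coefficient $1-p/2$ in the second term is $\le 0$, so no loss occurs there. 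Collecting terms yields the pointwise bound
$$
\mathbf{B}_p(Df)\;\ge\;\Big(\tfrac{p}{2K}+1-\tfrac{p}{2}\Big)\,|Df|^p\;=\;\frac{2K-p(K-1)}{2K}\,|Df|^p,
$$
whose prefactor is strictly positive precisely when $p<2K/(K-1)$.

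Next I invoke Theorem \ref{MainTh2}, which is valid on the whole range $2\le p\le 2K/(K-1)$, to obtain $\int_{\Omega}\mathbf{B}_p(Df)\,dz\le|\Omega|$. Combining this with the pointwise inequality from the previous step and dividing through by the positive prefactor gives
$$
\int_{\Omega}|Df|^p\,dz\;\le\;\frac{2K}{2K-p(K-1)}\,|\Omega|,
$$
which is exactly \eqref{Lp1}. For the equality statement I take $f(z)=z|z|^{1/K-1}$ on $\DD$ (which fixes $\partial\DD$). A direct calculation gives $f_z=\tfrac{K+1}{2K}|z|^{1/K-1}$ and $|f_{\bar z}|=\tfrac{K-1}{2K}|z|^{1/K-1}$, so $|Df|=|z|^{1/K-1}$ and $|Df|^2=K\,J_f$ hold pointwise. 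Hence the distortion inequality is saturated, turning the pointwise estimate above into an equality, while a one-line polar integration produces $\int_{\DD}|Df|^p\,dz=2\pi K/(2K-p(K-1))$. Since $|\DD|=\pi$, equality holds in \eqref{Lp1}; the broader family of radial-type extremizers will be handled separately in Section \ref{LlogLeq}.

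No real obstacle arises in this derivation: all the hard work lives inside Theorem \ref{MainTh2}, and the corollary itself is an algebraic transcription. The one point worth flagging is that the prefactor $\tfrac{2K-p(K-1)}{2K}$ vanishes precisely at the borderline exponent $p=2K/(K-1)$, which is why no uniform $\mathscr L^p$-bound can survive at the endpoint and why the paper's treatment of the critical case must instead be expressed in terms of the weighted integrability statement highlighted in the introduction.
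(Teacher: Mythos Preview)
Your argument is correct and matches the paper's own proof essentially verbatim: the paper likewise derives the pointwise bound $\mathbf{B}_p(Df)\geqslant \frac{2K-p(K-1)}{2K}\,|Df|^p$ from the distortion inequality and then invokes Theorem~\ref{MainTh2}. Your additional explicit verification of the extremal map $f(z)=z|z|^{1/K-1}$ and the remark on the degeneration at the borderline exponent are fine and in the spirit of the paper.
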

\begin{proof} Inequality (\ref{Lp1}) is straightforward consequence of Theorem \ref{MainTh2}, since for  $p < 2K/(K-1)$ we have pointwise $\,{\bf B}_p\bigl( Df(x)\bigr)  \geqslant |Df(x)|^p \;\frac{2K - \;p\,(K-1)}{2K}\,$.
\end{proof}

We next introduce yet another rank one-concave variational integral, simply by differentiating  $\mathscr B^{\,p}_\Omega \,[f]\,$ at $\,p=2\,$,
\begin{equation}\label{Buus}
\mathscr F_\Omega\,[f] := \lim_{p \searrow 2} \frac{\mathscr B^{\,p}_\Omega \,[f]\; -\;\mathscr B^{\,2}_\Omega \,[f]\,}{p \,-\,2}\;=
\end{equation}
$$\; \frac{1}{2} \int _\Omega \Big [\,\left(1 + \log |Df(z)|^2 \right )\, J(z,f)\,\textnormal d z  \;\;-\;\; |Df(z)|^2  \,\textnormal d z \,\Big ]
$$
The nonlinear differential expression
$
  J(z,f) \,\log \,|Df(z)| ^2 \,,$ for mappings with nonnegative Jacobian, is well known to be locally integrable, see \cite{GI} for the following qualitative local estimate on concentric balls $\,B\subset 2B \subset \Omega\,$,
\begin{equation}\label{LlogL}
 \Xint- _B J(z,f) \;\log \Big( e + \frac{\;|Df(z)|^2}{\Xint-_{_B }|Df|^2}  \Big )\;\textnormal d z \;\leqslant \; C\,\Xint-_{2B} |Df(z)|^2 \textnormal d z
\end{equation}
see also Theorem 8.6.1 in \cite{IMb}.  
However, for global estimates one must impose suitable boundary conditions on $\,f$. For example, global $\,\mathscr L\log \mathscr L (\Omega) \,$ estimates follow from (\ref{LlogL}) if $\,f\,$ extends beyond the boundary of $\,\Omega\,$ with finite Dirichlet energy and nonnegative Jacobian determinant. This is the case, in particular, when $\,f(z) - z \in  \mathscr W^{1,2}_{0}(\Omega)\,$.   

 Let us denote the class of homeomorphisms $\, f \in \mathscr W^{1,2}_{\textnormal{loc}}(\mathbb C)\,$ which coincide with the identity map outside a compact set by $\,\mathscr W^{1,2}_{\textnormal{id}} (\mathbb C) \,$.  It is useful to observe (see e.g. \cite[Thm. 20.1.6]{AIMb}) that such a map is automatically uniformly continuous. 
Further, the $\,\mathscr C^\infty$-smooth diffeomorphisms in $\,\mathscr W^{1,2}_{\textnormal{id}} (\mathbb C) \,$ are dense. Precisely, one has
\begin{lemma}[Approximation Lemma, {\cite[Thm. 1.1]{IKO}}
 ]\label{approximation1}
  Given any homeomorphism $\,f \in \mathscr W^{1,2}_{\textnormal{id}} (\mathbb C) \,$, one can find  $\,\mathscr C^\infty$-smooth diffeomorphisms $\,f^\ell \in \mathscr W^{1,2}_{\textnormal{id}} (\mathbb C) \,,\,\ell\geqslant 1\,$,  such that
  $$\|f^\ell\,-f\|_\infty + \,\;\|D(f^\ell\,-f\,)\|_{\mathscr L^2(\mathbb C)}\;\rightarrow 0, \quad  \mbox{ as } \;\ell\rightarrow\infty. $$
Passing to a subsequence if necessary,  we may ensure that $\;Df^\ell \,\longrightarrow Df\,$ almost everywhere.
\end{lemma}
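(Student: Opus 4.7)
The plan is to combine two major approximation steps: first approximate the Sobolev homeomorphism $f$ by piecewise affine homeomorphisms that match the identity boundary values, then mollify each piecewise affine map in a way that preserves injectivity. Throughout we exploit the fact that $f = \mathrm{id}$ outside a compact set $E$; fix a large disk $D_R \supset E$, so that it suffices to construct smooth approximants $f^\ell$ on $D_R$ agreeing with $\mathrm{id}$ on $\partial D_R$ and extend by $\mathrm{id}$ outside.

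First I would approximate $f$ by piecewise affine homeomorphisms $g^\ell : D_R \to D_R$ with $g^\ell = \mathrm{id}$ on $\partial D_R$ and $\|g^\ell - f\|_\infty + \|Dg^\ell - Df\|_{\mathscr L^2(D_R)} \to 0$. The recipe is to take triangulations $\mathcal T_\ell$ of $D_R$ whose mesh tends to zero, set $g^\ell$ to be the nodal interpolant of $f$ on the triangulation, and then locally repair $g^\ell$ near edges where naive interpolation fails to be injective. The $\mathscr W^{1,2}$-control for the unrepaired interpolant follows from the continuity of $f$ (a planar Sobolev homeomorphism is uniformly continuous, as noted just before the statement) together with the $\mathscr L^2$-density of Lebesgue points of $Df$; the injective correction along edges can be carried out as in Iwaniec--Kovalev--Onninen.

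Second I would smooth each PL homeomorphism $g = g^\ell$ to a $\mathscr C^\infty$ diffeomorphism $f^\ell$ close to $g$ in $\mathscr W^{1,2}\cap \mathscr C^0$ while keeping $f^\ell = \mathrm{id}$ on $\partial D_R$. On each open triangle $g$ is already affine, hence smooth. Near a common edge of two adjacent triangles $T_1, T_2$, on which $g$ is given by affine maps $A_1, A_2$, the difference $A_1 - A_2$ has rank one along that edge (because $g$ is continuous). In a strip of width $\varepsilon$ around the edge, replace $g$ by the convex combination $\varphi A_1 + (1-\varphi) A_2$ with a smooth cut-off $\varphi$; a straightforward determinant computation using the rank-one structure shows that the Jacobian of the interpolant is a convex combination of the two positive Jacobians of $A_1, A_2$, hence stays positive, so injectivity is preserved locally. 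Near a vertex one uses a scaled rotationally-symmetric template that smoothly transitions between the finitely many affine pieces meeting there. The $\mathscr W^{1,2}$-error introduced in the $\varepsilon$-neighbourhood of the $1$-skeleton is $O(\sqrt{\varepsilon}\,\|Dg\|_\infty)$, which can be made arbitrarily small. A standard diagonal argument over $\ell$ and $\varepsilon$ produces the required smooth approximants $f^\ell$, and passing to a further subsequence delivers a.e.\ convergence $Df^\ell \to Df$ from the $\mathscr L^2$-convergence.

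The main obstacle is the first step: going from a continuous Sobolev homeomorphism to a piecewise affine one while preserving both injectivity and the identity boundary values. The nodal interpolant of a homeomorphism need not be injective, since a triangle can ``flip'' if three nodes are nearly collinear in the image. Repairing this in a way compatible with $\mathscr W^{1,2}$-convergence is the delicate content of the cited [IKO] result and is what distinguishes this lemma from a routine mollification argument. The smoothing in the second step is technical but essentially local and classical, once the PL approximation is in hand.
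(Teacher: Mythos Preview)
The paper does not give a proof of this lemma at all: it is stated with an explicit citation to \cite[Thm.~1.1]{IKO} and used as a black box in the proof of Corollary~\ref{LlogL Th}. So there is no ``paper's own proof'' to compare against; the entire content is imported from Iwaniec--Kovalev--Onninen.

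Your proposal is a reasonable outline of the architecture behind the IKO theorem, and you correctly identify that the genuine difficulty lies in Step~1 (producing a piecewise affine \emph{homeomorphism} close in $\mathscr W^{1,2}$). However, as written your proposal is not a self-contained proof: you explicitly defer the injectivity repair in Step~1 to ``as in Iwaniec--Kovalev--Onninen'', which is precisely the cited result you are supposed to be proving. The smoothing in Step~2 is also more delicate than your sketch suggests: the edge-by-edge convex-combination argument works across a single edge, but at vertices several affine pieces meet and the rank-one structure is lost, so the ``rotationally-symmetric template'' you invoke needs a careful construction (and this, too, is part of what IKO supplies). In short, your proposal is an accurate high-level map of the IKO argument rather than an independent proof, and since the present paper simply quotes that theorem, there is nothing further to compare.
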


With estimates for the Burkholder integrals we now arrive at  sharp global $\,\mathscr L\log \mathscr L (\Omega) \,$ bounds.
\begin{proof}[Proof of Corollary \ref{LlogL Th}]
Upon the extension  as identity outside $\,\Omega\,$, $\,f \in  \mathscr W^{1,2}_{\textnormal{id}} (\mathbb C) \,$. We use the sequence $\{\,f^\ell\,\}\,$ in the approximation Lemma  \ref{approximation1}, and view each $\,f^\ell\,$ as a principal solution to its own  Beltrami equation
$$
  f^\ell_{\bar z} \;= \;\mu_\ell(z) \,f^\ell_z \;\;, \quad\;\; |\mu_\ell(z) | \leqslant k_\ell <1\;, \;\;\;\mu_\ell(z) = 0\;, \;\; \textnormal{for}\;\; |z| \geqslant R.
$$
where $\,R\,$ is chosen, and temporarily fixed, large enough so that $\,\Omega \subset D_R = \{z\;: |z| < R\}\,$. It is legitimate to apply  Theorem \ref{MainTh2} for each of the maps $\,f^\ell:  \,D_R \to \,D_R\,$,
$$
  \mathscr B^p_{D_R}\,[f^\ell] \leqslant \;|D_R| \;=\;\mathscr B^2_{D_R}\,[f^\ell] \;,\quad\;\; \textnormal{whenever}\;\; 2\leqslant p \leqslant 1 + k_\ell^{-1}
$$
Letting $\,p \searrow 2\,$ we obtain 
\begin{equation} \int _{D_R} \,\left(1\, + \,\log |Df^\ell(z)|^2 \,\right )\, J(z,f^\ell)\,\textnormal d z  \;\leqslant \;\int _{D_R}  |Df^\ell(z)|^2  \,\textnormal d z
\end{equation}
Convergence theorems in the theory of integrals let us pass to the limit when $\,\ell\rightarrow \infty\,$, as follows
$$
\int _{D_R} \,J(z,f)\,\left[1\, + \,\log |Df(z)|^2 \,\right ]\,\textnormal d z  = $$
$$\int _{D_R} J(z,f)\left[1\, + \,\log(1+ |Df|^2) \,\right ]\,\textnormal d z\;
-\int _{D_R} J(z,f)\left[\,\log ( 1 + |Df|^{-2})  \,\right ]\,\textnormal d z $$

$$\leqslant \;\;\liminf_{\ell\rightarrow \infty} \int _{D_R} \,J(z,f^\ell)\,\left[1\, + \,\log(1+ |Df^\ell|^2) \,\right ]\,\textnormal d z $$
$$-\lim_{\ell\rightarrow \infty}\;\; \int _{D_R} \,J(z,f^\ell)\,\log \left( 1 + |Df^\ell|^{-2}  \,\right )\,\textnormal d z  $$
 Here the $(\liminf)$-term is justified by Fatou's theorem while the $(\lim)$-term by the Lebesgue dominated convergence,  where we observe that the integrand is dominated point-wise by $ J(z,f^\ell)\,| Df^\ell| ^{-2}\,\leqslant 1$. The lines of computation continue as follows
 $$
  =\;\;\liminf_{\ell\rightarrow \infty} \int _{D_R} \,J(z,f^\ell)\,\left[1\, + \,\log|Df^\ell|^2 \,\right ]\,\textnormal d z \leqslant
 $$

 $$
 \liminf_{\ell\rightarrow \infty} \int _{D_R}  |Df^\ell|^2 \,\,\textnormal d z  = \int _{D_R} \, |Df(z)|^2 \,\textnormal d z
 $$
Finally, we observe that
$$
\int _{D_R \setminus \Omega} \,J(z,f)\,\left[1\, + \,\log |Df(z)|^2 \,\right ]\,\textnormal d z  = \int _{D_R\setminus\Omega} \, |Df(z)|^2 \,\textnormal d z\;,$$
which combined with the previous estimate yields \eqref{LlogL1},
as desired.
\end{proof}

 The above energy functional can be further cultivated by applying  the inverse map, as described at the end of Section \ref{se:proof}.  A search of minimal regularity in  the corresponding integral estimates leads us to  mappings of integrable distortion.
 
 \begin{corollary}\label{loginv} Let  $\Omega\subset\R^2$ be a bounded domain, and suppose  $h \in {\mathscr W}^{1,1}_{loc}(\Omega)$  is a homeomorphism of $\overline \Omega$ such that $h(z) = z$ for $z \in \partial \Omega$.
 Assume $h$ satisfies the distortion inequality
\begin{equation}\label{distortion2}
|D h(z)|^2\leqslant K(z)J(z,h),\qquad {\rm a.e\;\; in} \;\; \Omega,\nonumber
\end{equation}
where $1\leqslant K(z)<\infty$ almost everywhere in $\Omega.$ The smallest such function, denoted by $ K(z,h)$, is assumed to be integrable. 
 Then
\begin{equation}\label{loginvar} 2 \int_\Omega \log |Dh| - \log J(z,h) \; \leqslant  \;  \int_\Omega  K(z,h) -  J(z,h)
 \end{equation}
In particular,  $\log J(z,h)$ is integrable. Again there is a wealth of functions, to be described in Section 5, 
 satisfying  \eqref{loginvar} as an identity.
 \end{corollary}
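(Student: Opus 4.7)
The plan is to exploit the inverse-functional symmetry discussed at the end of Section \ref{se:proof}: the integrand in Corollary \ref{LlogL Th} and the integrand $\mathscr H(A)$ in \eqref{functional} are related by $\hat{\mathbf E}(A) = \mathbf E(A^{-1})\det A$, so applying Corollary \ref{LlogL Th} to $f := h^{-1}$ and changing variables should yield \eqref{loginvar}. Concretely, I set $f := h^{-1}:\Omega\to\Omega$, which is again a homeomorphism equal to the identity on $\partial\Omega$.

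First I verify that $f$ satisfies the hypotheses of Corollary \ref{LlogL Th}, i.e. $f-z\in \mathscr W^{1,2}_0(\Omega)$. For a homeomorphism $h$ of finite distortion with $K(\cdot,h)\in\mathscr L^1(\Omega)$, classical results (Lusin's $(N)$-property and the inverse function lemma for maps of finite distortion, cf.~\cite{AIMb}) give that $f=h^{-1}$ lies in $\mathscr W^{1,2}_{\loc}$ with the pointwise identities
$$|Df(h(w))|=\frac{|Dh(w)|}{J(w,h)}\,,\qquad J(h(w),f)=\frac{1}{J(w,h)}\,,\qquad\text{a.e.\ }w\in\Omega.$$
The change-of-variables formula, which is applicable since $h$ is a homeomorphism of finite distortion, then gives
$$\int_\Omega |Df(z)|^2\,dz \;=\; \int_\Omega \frac{|Dh(w)|^2}{J(w,h)}\,dw\;=\;\int_\Omega K(w,h)\,dw \,<\,\infty,$$
so in particular $f\in \mathscr W^{1,2}(\Omega)$ with identity boundary values, hence $f-z\in\mathscr W^{1,2}_0(\Omega)$.

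Now I apply Corollary \ref{LlogL Th} to $f$ and transfer the resulting inequality back to $h$ by the substitution $z=h(w)$. The left-hand side becomes
$$\int_\Omega \bigl(1+\log|Df(z)|^2\bigr)J(z,f)\,dz \;=\; \int_\Omega \bigl[1+2\log|Dh(w)|-2\log J(w,h)\bigr]\,dw,$$
using the two pointwise identities above (the Jacobian factor from the substitution cancels $J(h(w),f)$). Combining this with the computed right-hand side yields
$$\int_\Omega \bigl[1+2\log|Dh(w)|-2\log J(w,h)\bigr]dw \;\leqslant\; \int_\Omega K(w,h)\,dw.$$
Since $h:\Omega\to\Omega$ is a homeomorphism, the area formula gives $\int_\Omega J(w,h)\,dw=|\Omega|=\int_\Omega 1\,dw$, so subtracting $|\Omega|$ from both sides produces exactly \eqref{loginvar}. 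The integrability of $\log J(z,h)$ follows because the right-hand side and $\int \log|Dh|$ are both finite (the former by hypothesis, the latter from $|Dh|^2\leqslant K(z,h)\cdot J(z,h)$ together with the $(N)$-property).

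The principal technical obstacle lies in justifying the change of variables and the Sobolev regularity of $h^{-1}$ under the sole assumption $K(\cdot,h)\in \mathscr L^1$; a secondary subtlety is to ensure that $\log J(w,h)$, which is only \emph{a posteriori} integrable, can legitimately be split off from the $\log|Dh|$ term during the manipulations. Both issues are standard in the theory of mappings of finite distortion and can be handled either by direct appeal to the theorems in \cite{AIMb} or by approximating $h$ through diffeomorphisms with nondegenerate Jacobians (analogous to Lemma \ref{approximation1}) and passing to the limit via Fatou's lemma on the positive parts and dominated convergence on the controlled negative parts, exactly as in the proof of Corollary \ref{LlogL Th}.
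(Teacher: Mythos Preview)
Your proposal is correct and follows essentially the same route as the paper: pass to the inverse $f=h^{-1}$, use the known $\mathscr W^{1,2}$-regularity of the inverse with $\int_\Omega|Df|^2=\int_\Omega K(\cdot,h)$ (the paper cites \cite{AIMO} and its $\mathscr W^{1,1}$-enhancement \cite{HKO} rather than \cite{AIMb}), apply Corollary~\ref{LlogL Th} to $f$, and change variables via $|A^{-1}|=|A|/\det A$ to obtain \eqref{loginvar}. The only minor difference is in the last step: for the integrability of $\log J(\cdot,h)$ the paper uses the pointwise Hadamard inequality $-\log J \leqslant 2(\log|Dh|-\log J)$, which directly bounds the negative part of $\log J$ by the already-controlled left-hand side of \eqref{loginvar}; your remark that $\int\log|Dh|$ is finite is less immediate under the stated hypotheses, so the paper's route is cleaner here.
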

 Note that in fact
$$
K(z,h)=\left\{
\begin{array}{ll}
\frac{|Dh(z)|^2}{J(z,h)}, & {\rm if}\;\;  J(z,h) > 0,\\
1, & {\rm otherwise}.\;\;  \\
\end{array}\right.
$$
Thus according to the corollary, the functional ${\mathscr H} (A)$ in \eqref{functional} is quasiconvex at $A= Id$, in its entire natural domain of definition  $\openset=\mathbb R^{2\times 2}_+$.
 
\begin{proof}[Proof of Corollary \ref{loginv}]
It was observed in \cite{AIMO} that if $h:\Omega\to h(\Omega )$ is a homeomorphism of Sobolev class $\mathscr W^{1,2}_{loc} (\Omega )$ with integrable distortion $K(z,h)\in \mathscr L^1 (\Omega ),$ then its inverse map $f= h^{-1}: h(\Omega )\overset{{\rm onto}}{\to}\Omega $ belongs to $\mathscr W^{1,2} (h(\Omega ))$ and one has 
\begin{equation}\label{aimo}
\int_{h(\Omega )} |Df|^2=\int_\Omega K(z,h).
\end{equation}
Enhancement of this identity for mappings $h\in\mathscr W^{1,1}_{loc}(\Omega )$ is given in \cite{HKO}.

In our situation $f(z)-z=h^{-1}(z)-z$ is continuous in $\overline{\Omega}$, vanishes on $\partial\Omega$,
and belongs to $\mathscr W^{1,2}(\Omega )$. It is well-known that this implies  $f(z)\in z+\mathscr W^{1,2}_0(\Omega ).$ Thus it is legitimate to apply  Corollary \ref{LlogL Th} to deduce
\begin{equation}\label{jj}
\int_{\Omega } 1+ 2\log \Bigl(\frac{|Dh|}{J(z,h)}\Bigr)\leqslant \int_\Omega K(z,h).
\end{equation}
Here we used the identity $|A^{-1}| = |A|/\det(A)$ and a change of variable, made legitimate by the fact  that $f$ is a homeomorphism of Sobolev class $\mathscr W^{1,2} (\Omega ).$ The claim obviously follows from (\ref{jj}), and Hadamard's inequality  yields the logarithmic integrability, 
$$- \int_\Omega  \log J(z,h) \; \leqslant 2 \int_\Omega \log |Dh| - \log J(z,h),
$$
with equality here for the indentity mapping.
\end{proof}

In particular,  the Jacobian of a map with integrable distortion cannot approach zero too rapidly. This result is in fact known \cite{KO}, the novelty in (\ref{loginvar}) is the sharpness.

We next turn to  the exponential integrability results, which will follow from  Theorem \ref{MainTh} at the limit $p\to\infty .$ 
\begin{proof}[Proof of Corollary \ref{expint}] Let us assume we are given a function $\mu$,  supported in $\DD\,$ with $ |\mu (z)|\leqslant 1$ for all $z\in\DD.$  We then consider the principal solution $f$ of the
Beltrami equation
$
f_{\overline z}\; =\;\varepsilon \mu f_z
$
and apply Theorem  \ref{MainTh} with $k=\varepsilon$ and $p=1+1/\varepsilon$ to obtain
\begin{equation}\label{epsi}
\int_\DD \left(\frac{1-|\mu (z)|}{1+\varepsilon |\mu (z)|}\right)\big| \, Df(z)\, \big|^{1+1/\varepsilon}\, dz\leqslant \pi .
\end{equation}
By applying the Cauchy-Schwarz inequality and the ${\mathscr L}^2$-isometric property of ${\mathbf S}$, wee see that for
almost every $z\in \DD$, developing \eqref{SBel}-\eqref{56} to  a  Neumann
series represents  $f_z $  as a power series in  $\varepsilon$, with  convergence radius $ \geqslant 1$. Hence
$$
f_z=1+\varepsilon {\mathbf S}\mu + O(\varepsilon ^2)\qquad{\rm for \; a.e.}\;\; z\in\DD.
$$
We may use this  to compute pointwise 
\begin{eqnarray}
(1+1/\varepsilon )\log |Df|&= &(1+1/\varepsilon )\big( \log (1+\varepsilon |\mu |)+\log |f_z|\big)\nonumber\\
&=& |\mu |+\re {\mathbf S}\mu +O(\varepsilon ).\nonumber
\end{eqnarray}
Hence  $\big| \;Df\; \big|^{1+1/\varepsilon} = \exp (|\mu |+\re{\mathbf S}\mu ) + O(\varepsilon )$ and the desired result follows at the limit $\varepsilon\to 0$ by an application of Fatou's lemma on (\ref{epsi}).
\end{proof}

\section{Piecewise Radial Mappings}\label{se:radial}

\subsection{Examples of optimality in Theorems  \ref{MainTh3}, \ref{MainTh2} and  \ref{MainTh}}

Our exposition here is slightly condensed since the basic principle behind these examples can be found already in  the paper \cite{BM} of A. Baernstein and S. Montgomery-Smith, or in the paper  \cite{Iw1} of the second author.
Let us start by  describing the building block of the maps that yields equality in our main result. For any $0\leqslant r<R$ consider the radial map
\begin{equation}
\label{rad}
g(z) = \rho(\,|z |\,) \;\frac{z  }{|z|}
\end{equation}
defined in the disc $\{ |z|\leqslant R\}.$ We assume that $\rho:[0,R]\to [0,R]$ is  absolutely continuous and strictly increasing with $\rho(0)=0$, and that  $\rho$ is linear on $[0,r]$. We first restrict ourselves to the situation $p\geqslant 1$, and then   need the following \emph{expanding assumption}
\begin{equation}\label{expanding}
  \frac{\rho(t)}{t}\; \geqslant\; \dot{\rho}(t)\; \geqslant 0\;,\qquad t\in (r, R)
\end{equation}
together with the normalization $\rho (R)=R.$  Hence on the boundary
the map $g$ coincides with the identity map, and if needed we may extend $g$ to the exterior $\{ |z|\geqslant R\}$ by setting $g(z)=z$ for these values. 

The differential of $g$ exhibits the following rank-one connections
\begin{equation}
 \label{eq:diff}
Dg(z)= \frac{\rho(|z|)}{|z|} Id + \left( \dot{\rho}(|z|) - \frac{\rho(|z|)}{|z|} \right) \frac{z \otimes z}{|z|^2}.
\end{equation}
It is known, see \cite[Proposition 3.4]{Ball90} that concavity along the indicated rank-one lines already secures the quasiconcavity
condition for the radial map $g$.
In our situation the assumption (\ref{expanding}) indeed ensures that the
Burkholder integrals become linear on the rank-one segments displayed in \eqref{eq:diff}, which implies
\begin{equation}
\label{eq:radialquasiconcavity}
 \mathscr B_{B(0,R)} ^{\,p}\,[g] = \mathscr B_{B(0,R)} ^{\,p}\,[Id].
\end{equation}
 Actually, a direct computation (see  \cite{BM,Iw1} for details)
using the formulas 
$
  g_z(z)   = \frac{1}{2} \Big(\, \dot{\rho}(|z|)\;+{\rho(|z|)}/{|z|}\;\Big )$ and $g_{\bar{z}}(z)   = \frac{1}{2} \Big(\, \dot{\rho}(|z|)-{\rho(|z|)}/{|z|}\;\Big ){z}/{\bar{z}}
  $
yields
\begin{eqnarray}\label{Burkholder4}
 \mathscr B_{B(0,R)} ^{\,p}\,[g] &=&\pi\int_0^R \left( \frac{[\,\rho(t)\,]^ p}{\, t^{p-2}}\nonumber
     \right)'\;\textrm {d} t  \;=\;  \pi\;\frac{[\,\rho(R)\,]^p}{\, R^{\,p-2}} \;-\; \lim_{t\to 0^+} \pi\;\frac{[\,\rho(t)\,]^p}{\, t^{\,p-2}}\\
     &=& \pi R^2=|B(0,R) | =  \mathscr B_{B(0,R)} ^{\,p}\,[Id] .
     \nonumber
     \end{eqnarray}
The above computation indicates that if $\,r = 0\,$,  we must  in addition require  
\begin{equation}\label{aa1}
\,\rho(t) = o( t^{1-\frac{2}{p}})\,\qquad  {\rm as}\;\; \; t\to 0\,.
\end{equation}

Assume then that $f_0(z)=az+b$ is a (complex) linear map defined in a bounded domain $\Omega\subset\C.$ Given $0\leqslant r<R$ and a   ball $B(z_0,R)\subset \Omega$ together with the increasing homeomorphism $\rho:[0,R]\to [0,R]$ and the radial map $g$ as discussed above, we may modify $f_0$
in $B(z_0,R)$ by defining
$$
f_1(z)=\left\{
\begin{array}{ll}
f_0(z) & {\rm if}\;\;  z\not\in \overline{B(z_0,R)},\\
ag(z-z_0)+(az_0+b) & {\rm if}\;\;  z \in{B(z_0,R)},\\
\end{array}\right.
$$
By scaling, (\ref{eq:radialquasiconcavity}) shows that we have $ \mathscr B_{B(z_0,R)} ^{\,p}\,[f_1] = \mathscr B_{B(z_0,R)} ^{\,p}\,[f_0],$ and, consequently
\begin{equation}
\label{eq101}
 \mathscr B_{\Omega }^{\,p}\,[f_1] = \mathscr B_{\Omega }^{\,p}\,[f_0].
\end{equation}
In the next step we may deform $f_1$ in a disc that is contained in either  one of the sets  $\Omega\setminus B(z_0,R)$ or  $ B(z_0, r)$, where $f_1$ is linear. Inductively one obtains $f_n$ from $f_{n-1}$ by deforming $f_{n-1}$ accordingly in the domains of linearity. 
   By induction, we see that all such mappings  have the same energy, which is equal to the energy of their linear boundary data $\,az + b$:
\begin{equation}\label{energy}
  \mathscr B^{\,p}_{\Omega}
 \,[ f_{n}]\;=\; \mathscr B^{\,p}_{\Omega} \,[az+b] \; =\; a^p\,|\Omega|
\end{equation}

This iteration process may, but need not, continue indefinitely so as to arrive at e.g.  Cantor type configuration of annuli and a homeomorphism $\,f_\infty :\,\Omega \overset{\textnormal{\tiny{onto}}}{\longrightarrow}\,\Omega^*\,:=a\Omega +b$.    Without going to the formal definition, we loosely refer to reasonable (e.g. converging in $\mathscr W^{1,1}$) such limits $f=\lim_{n\to\infty} \,f_n$  as \textit{piece-wise radial mappings}, see Figure 1.

\begin{figure}
\includegraphics[width=4.5in]{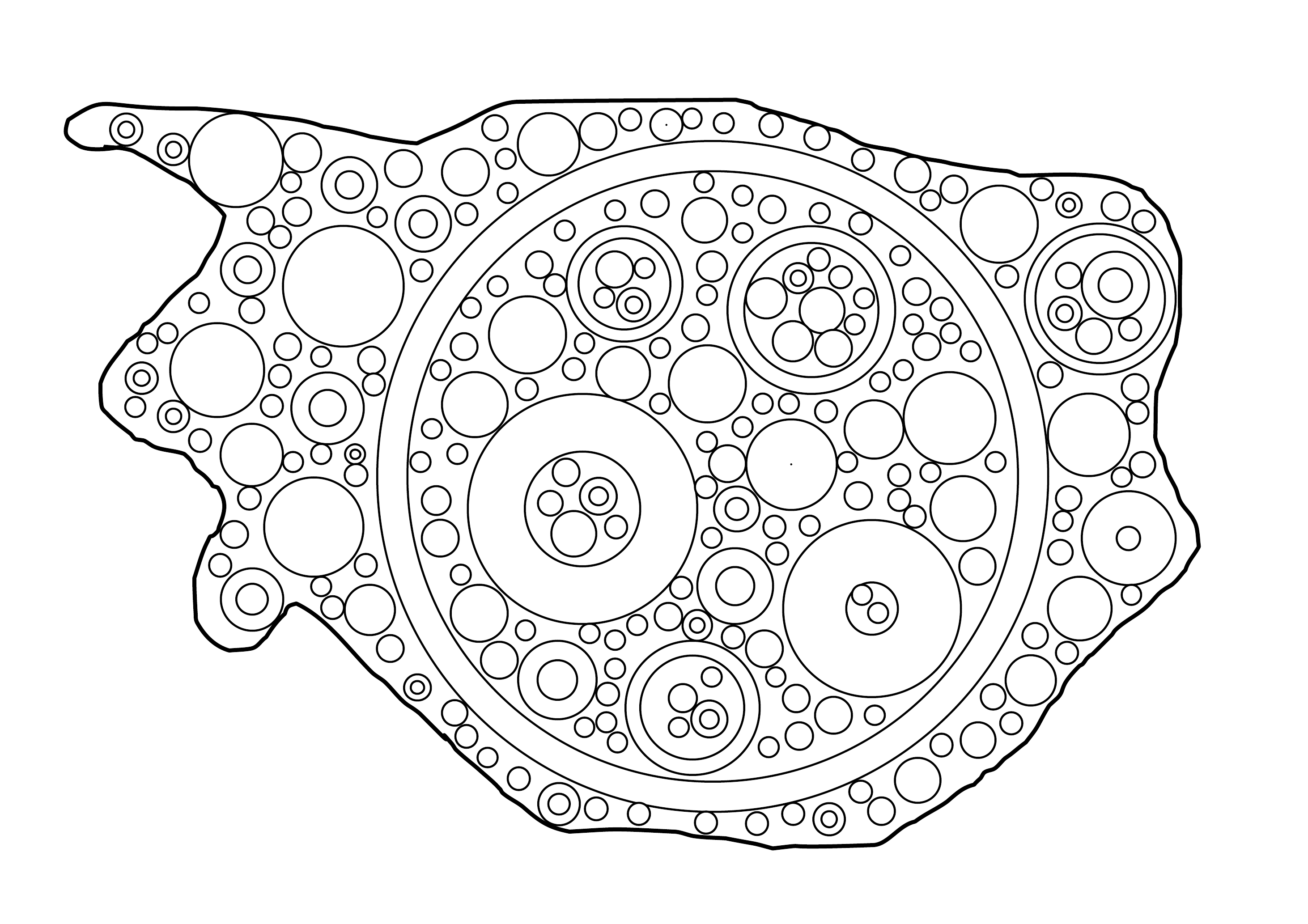}
\caption{Annular Packing}
\end{figure}

\begin{definition} Let $p\geqslant 1$ and let  $\,\Omega \subset \mathbb C\,$ be any nonempty 
 bounded domain.  
The class $\radial^p(\Omega)\,$ consists  of  piece-wise radial mappings 
  $\,f_\infty : \,\Omega \overset{\textnormal{\tiny{onto}}}{\longrightarrow}\, \Omega\,$   whose construction starts with $\; f_0(z) \equiv z\,$, the convergence $f_\infty=\lim_{n\to\infty} f_n$ takes place in $\mathscr W^{1,p}(\Omega )$ and the condition (\ref{expanding}) is in force.
\end{definition}
\noindent The following observation is a direct corollary of (\ref{energy}).
\begin{proposition}\label{Equality}
For any $p \geqslant 1$ and   $\,f\in \radial^p(\Omega$) we have
\begin{equation}\label{aaaaa}
\,\mathscr B_{\Omega} ^{\,p}\,[f] = \mathscr B_{\Omega} ^{\,p}\,[\textnormal{Id}] \,= |\Omega|.
\end{equation}
\end{proposition}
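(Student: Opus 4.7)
The plan is to combine the finite-step identity already contained in \eqref{energy} with a continuity argument for the Burkholder functional under $\mathscr W^{1,p}$-convergence.

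First I would note that, since $f_0(z)\equiv z$, applying \eqref{energy} inductively (with $a=1$, $b=0$) to the sequence $\{f_n\}$ defining $f$ gives
$$
\mathscr B^{\,p}_{\Omega}[f_n] \;=\; \mathscr B^{\,p}_{\Omega}[f_0] \;=\; |\Omega|, \qquad n=0,1,2,\dots
$$
So the entire content of the proposition is to pass to the limit $n\to\infty$ on the left-hand side and check that the Burkholder energy is preserved.

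Next, I would use that $\mathbf B_p : \mathbb R^{2\times 2} \to \mathbb R$ is a continuous, positively $p$-homogeneous function, and hence satisfies the pointwise bound $|\mathbf B_p(A)| \leqslant C_p |A|^p$ for a numerical constant $C_p$. By the very definition of $\radial^p(\Omega)$ one has $f_n \to f$ in $\mathscr W^{1,p}(\Omega)$, so $Df_n \to Df$ in $\mathscr L^p(\Omega,\mathbb R^{2\times 2})$. Passing to a subsequence, I may assume $Df_n \to Df$ almost everywhere, and the continuity of $\mathbf B_p$ then gives the pointwise a.e.\ convergence $\mathbf B_p(Df_n)\to \mathbf B_p(Df)$.

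The only delicate point is the interchange of limit and integral: the sequence $\mathbf B_p(Df_n)$ is not dominated, so one cannot appeal directly to Lebesgue's dominated convergence. The remedy is that $\mathscr L^p$-convergence of the gradients automatically yields equi-integrability of $\{|Df_n|^p\}$, and hence of $\{|\mathbf B_p(Df_n)|\}$ by the pointwise bound. Combined with the a.e.\ convergence, Vitali's convergence theorem then provides
$$
\mathscr B^{\,p}_{\Omega}[f] \;=\; \lim_{n\to\infty} \mathscr B^{\,p}_{\Omega}[f_n] \;=\; |\Omega|,
$$
which is the claim. The fact that the definition of $\radial^p(\Omega)$ requires convergence in $\mathscr W^{1,p}$, rather than in a weaker topology, is precisely what guarantees the equi-integrability needed to justify this limit.
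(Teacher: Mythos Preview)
Your proposal is correct and follows the same route as the paper: the paper simply declares the proposition a ``direct corollary'' of \eqref{energy}, leaving the passage to the limit implicit, while you spell out precisely why the $\mathscr W^{1,p}$-convergence built into the definition of $\radial^p(\Omega)$ suffices (via the $p$-growth bound $|\mathbf B_p(A)|\leqslant C_p|A|^p$, equi-integrability, and Vitali). One minor remark: the subsequence step is harmless here since you are proving a statement about the fixed map $f$, not about convergence of the full sequence; alternatively, $\mathscr L^p$-convergence of $Df_n$ already gives convergence in measure, so Vitali applies directly without extracting a subsequence.
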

\noindent In interpreting this conclusion one may say that $\,\mathscr B_{\Omega} ^{\,p}\,$ is a null Lagrangian when restricted to   $\,\radial^p(\Omega)\,$.

We get a plethora of fairly complicated maps that produce equality in our main Theorems.
One just needs to consider any $f\in \,\radial^p\,$ that satisfies the additional
condition $\mathbf B_p(Df(x)) \geqslant 0,$ i.e. in the construction one applies maps  $\rho$ that
satisfy
\begin{equation}\label{rho4}
\frac{\rho(t)}{t} \, \geqslant \, \dot{\rho}(t)\, \geqslant \, \Big(1 - \frac{2}{p}\,\Big)\, \frac{\rho (t)}{t}.
\end{equation}
In case of Theorem \ref{MainTh3}, in order to satisfy the smoothness assumption  one of course has to pick the functions $\rho$ in the construction  so that the (possibly limiting) map belongs to $Id + {\mathscr C}^\infty_\circ (\Omega ).$

 In order to incorporate the range  $p\leqslant 1$, one introduces  the \emph{compressing assumption}, the opposite of 
(\ref{expanding}):
 \begin{equation}\label{nonexpanding}
  \frac{\rho(t)}{t}\; \leqslant\; \dot{\rho}(t)\;,\qquad t\in (r, R)
\end{equation}
together with the condition
$  \lim_{t\to 0^+}\rho(t)/t^{1-2/p}=\infty$
in case $r=0$ and $p<0$.
The previous construction yields after finitely many iterations non-trivial examples of maps that satisfy 
(\ref{aaaaa}) in case $p<1.$ Also limiting maps can be considered, but then one has to take care of  the Burkholder functional to remain well-defined.

In view of Conjecture \ref{Morrey's}, the maps in $\,\radial^p(\Omega)\,$ are potential global
 extremals for $\,\mathscr B_{\Omega} ^{\,p}\,$. Indeed, it can  be shown that they are critical points of the associated Euler-Lagrange 
equations. Furthermore, this property to a large extent characterizes Burkholder 
functionals: these functionals are the only (up to scalar multiple) isotropic and homogeneous 
variational integrals with  the $\,\radial^p(\Omega)\,$ as their  critical points. This topic will be discussed in a companion paper. 

Here, we 
content with pointing out the following result, where we employ  the customary  notation $\,\mathscr C^1_{\textnormal {id}}(\Omega)\, = \, \textnormal{id} \,+ \mathscr C^1_\circ(\Omega)\,$, where  $\, \mathscr C^1_\circ(\Omega)\,$ stands for the space  of functions $\mathscr C^1$ -smooth up to the boundary of $\,\Omega\,$ and vanishing on $\,\partial \Omega\,$.

\begin{corollary}\label{locmaxima}
The Burkholder functional $\;\mathscr B^{\,p}_\Omega\; :  \;\mathscr C^1_{\textnormal{id}}(\Omega) \rightarrow \,\mathbb R\,$, $ \,p> 2\,,$ attains its local maximum at every $\,\mathscr C^1$- smooth piece-wise radial map in $\, \radial^p(\Omega)\, $ for which the condition $(\ref{rho4})$ is  further reinforced  to:
\begin{equation}\label{rho3}
\frac{\rho(t)}{t} \, \geqslant \, \dot{\rho}(t)\, \geqslant\, \frac{1}{K} \, \frac{\rho (t)}{t}, \qquad K< \frac{p}{p-2}.
\end{equation}
\end{corollary}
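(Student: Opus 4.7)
The plan is to derive the local-maximum property at $f_0$ directly from Theorem \ref{MainTh2}, exploiting the fact that the strengthened condition (\ref{rho3}) places $f_0$ \emph{strictly} inside the admissible range of distortions. Concretely, I will compare $\mathscr{B}^p_\Omega[f]$ for a small $\mathscr{C}^1$-perturbation $f = f_0 + \phi$, $\phi \in \mathscr{C}^1_\circ(\Omega)$, against the baseline value $\mathscr{B}^p_\Omega[f_0] = |\Omega|$ provided by Proposition \ref{Equality}.

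First, I would unpack what (\ref{rho3}) says about $f_0$. The radial formulas $|g_z| + |g_{\bar z}| = \rho(t)/t$ and $J_g = \dot{\rho}(t)\,\rho(t)/t$ (valid for expanding radial building blocks, $t = |z|$), combined with (\ref{rho3}), show that the pointwise distortion $K_{f_0}(z) := |Df_0(z)|^2/J_{f_0}(z)$ is bounded by $K$ on every annulus of the construction; under the standing $\mathscr{C}^1$-hypothesis this bound becomes uniform on $\overline{\Omega}$, and in addition $J_{f_0}$ is a continuous function, everywhere positive on $\overline{\Omega}$ (positive on $\partial\Omega$ where $f_0 = \mathrm{id}$, and positive elsewhere by (\ref{rho3})).

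Second, I would verify that for $\|\phi\|_{\mathscr{C}^1}$ sufficiently small, $f = f_0 + \phi$ still satisfies the hypotheses of Theorem \ref{MainTh2} with some distortion constant $K'$ obeying $K \leqslant K' < p/(p-2)$, equivalently $p < 2K'/(K'-1)$. The map $A \mapsto |A|^2/\det A$ is continuous on the open cone $\{\det A > 0\}$, so uniform $\mathscr{C}^1$-closeness of $Df$ to $Df_0$ yields a uniform bound $K_f(z) \leqslant K'$ together with $J_f > 0$ throughout $\overline{\Omega}$. Moreover, $f - \mathrm{id} = (f_0 - \mathrm{id}) + \phi \in \mathscr{C}^1_\circ(\Omega)$, so $f$ has identity boundary values; combined with positive Jacobian, a standard degree argument upgrades $f$ to a $\mathscr{C}^1$-diffeomorphism of $\overline{\Omega}$ onto itself. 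Hence Theorem \ref{MainTh2} applies at the exponent $p$ and gives $\mathscr{B}^p_\Omega[f] \leqslant |\Omega| = \mathscr{B}^p_\Omega[f_0]$, which is precisely the sought local-maximum property.

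The main technical point is the second step: one must verify that the smoothness assumption on $f_0$ genuinely converts the pointwise inequality (\ref{rho3}) into a bound that is \emph{stable} under small $\mathscr{C}^1$-perturbations, uniformly on the compact set $\overline{\Omega}$. In particular, $J_{f_0}$ must remain bounded away from zero near $\partial\Omega$ and across the annular interfaces of the piece-wise construction; here the $\mathscr{C}^1$-assumption (together with the compactness of $\overline{\Omega}$) does the essential work. Once these uniform controls are in place, the remainder of the argument reduces to a direct invocation of Theorem \ref{MainTh2} and Proposition \ref{Equality}.
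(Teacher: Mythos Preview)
Your proposal is correct and follows essentially the same route as the paper: use (\ref{rho3}) together with the $\mathscr C^1$-smoothness to see that $f_0$ is $K$-quasiconformal with $K<p/(p-2)$ and with Jacobian bounded away from zero on $\overline{\Omega}$, so that small $\mathscr C^1$-perturbations remain $K'$-quasiconformal with $K'<p/(p-2)$; then apply Theorem \ref{MainTh2} and Proposition \ref{Equality}. Your write-up is in fact more explicit than the paper's (you spell out the stability of the distortion bound and the degree argument for injectivity), but the strategy is identical.
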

\begin{proof}The lower bound (\ref{rho3}) can be used to verify that
$f$ is $K$-quasiconformal with $K<p/(p-2).$
 Namely, as  $\,f \in\mathscr C^1_{\textnormal{id}}(\Omega)\,$, one checks that $f$ is necessarily conformal at points corresponding to $t=0$ and the derivative is non-vanishing.  Hence the strict inequality for $K$ will not be destroyed by small $\,\mathscr C^1$ -perturbations.  Theorem \ref{MainTh2} applies, and by combining it with Proposition \ref{Equality} the claim is evident. 
\end{proof}

\subsection{Equality in Corollaries \ref{LlogL Th},   \ref{expint},  \ref{best} and \ref{loginv}} \label{LlogLeq} 

If one substitutes in the formula \eqref{Buus} a function for which $\;\mathscr B^{\,p}_\Omega [f] \; = \;\mathscr B^{\,2}_\Omega [f]$, we  acquire
 the equality in the  $\mathscr{L\log L}$-inequality  of  Corollary \ref{LlogL Th}. Especially, by (\ref{aaaaa}) we obtain
\begin{lemma} Let $\Omega$ be a bounded domain in the plane.
If $f$ belongs to class $\bigcup_{p>2}\radial^p(\Omega)$, then there is equality in \eqref{LlogL1}  in Corollary \ref{LlogL Th}.
\end{lemma}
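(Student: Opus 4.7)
The plan is to show that the real-valued map $p \mapsto \mathscr B^{\,p}_\Omega[f]$ is \emph{constant} (equal to $|\Omega|$) on a right-neighbourhood of $p=2$, and then read off the equality directly by differentiating once at $p=2$ via the identity (\ref{Buus}).

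First, I would observe that the class $\radial^p(\Omega)$ is monotone in $p$: if $f\in\radial^{p_0}(\Omega)$ for some $p_0 > 2$, with approximating sequence $\{f_n\}$ converging to $f$ in $\mathscr W^{1,p_0}(\Omega)$, then $f\in\radial^p(\Omega)$ for every $1\leqslant p\leqslant p_0$. Indeed, both the expanding condition (\ref{expanding}) and the starting map $f_0\equiv\mathrm{Id}$ in the iterative construction are independent of $p$; since $\Omega$ is bounded, $\mathscr W^{1,p_0}$-convergence forces $\mathscr W^{1,p}$-convergence for any $p\leqslant p_0$. Applying Proposition \ref{Equality} separately at each such $p$ then yields
\begin{equation*}
\mathscr B^{\,p}_\Omega[f] \;=\; |\Omega| \qquad \text{for every } p \in [2, p_0].
\end{equation*}

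Consequently the difference quotient entering the definition (\ref{Buus}) vanishes identically for $p\in (2,p_0]$, so
\begin{equation*}
\mathscr F_\Omega[f] \;=\; \lim_{p\searrow 2}\frac{\mathscr B^{\,p}_\Omega[f]-\mathscr B^{\,2}_\Omega[f]}{p-2} \;=\; 0.
\end{equation*}
Combining this with the explicit integral representation of $\mathscr F_\Omega[f]$ in (\ref{Buus}) gives exactly
\begin{equation*}
\int_\Omega \bigl(1+\log|Df(z)|^2\bigr)\,J(z,f)\,\mathrm{d}z \;=\; \int_\Omega |Df(z)|^2\,\mathrm{d}z,
\end{equation*}
which is the equality statement of (\ref{LlogL1}).

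The one mildly technical step is justifying the second identity in (\ref{Buus}), namely that the pointwise derivative
\begin{equation*}
\partial_p \mathbf{B}_p(Df)\big|_{p=2} \;=\; \tfrac{1}{2}\bigl[(1+\log|Df|^2)\,J(z,f)-|Df|^2\bigr]
\end{equation*}
exchanges with the integral. This is a routine dominated convergence argument: fixing any $p'\in (2,p_0)$, a direct computation shows that, for $p\in [2,p']$, the difference quotient $[\mathbf{B}_p(Df)-\mathbf{B}_2(Df)]/(p-2)$ is bounded pointwise by a constant multiple of $|Df|^{p'}\log(e+|Df|)$, which is integrable since $|Df|\in\mathscr L^{p_0}(\Omega)$ with $p_0>p'$. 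Hence the interchange of limit and integral is legitimate, and nothing else is needed beyond the nesting of $\radial^p(\Omega)$ and Proposition \ref{Equality}.
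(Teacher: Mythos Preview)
Your proposal is correct and follows essentially the same route as the paper. The paper's argument is extremely terse: it simply observes that substituting into formula (\ref{Buus}) a function for which $\mathscr B^{\,p}_\Omega[f]=\mathscr B^{\,2}_\Omega[f]$ forces $\mathscr F_\Omega[f]=0$, and then invokes Proposition \ref{Equality}. You have filled in the two details the paper leaves implicit---the nesting $\radial^{p_0}(\Omega)\subset\radial^{p}(\Omega)$ for $p\leqslant p_0$, and the dominated convergence justification for passing the limit inside the integral in (\ref{Buus})---both of which are handled correctly.
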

\noindent Actually, one checks that in the construction condition (\ref{aa1}) can be replaced by the analogue
$\rho (t)=o\big(\log (1/t)^{-1}\big).$ Concerning Corollary \ref{loginv}, an alternative route to its  integral estimates comes by taking the derivative  $\partial_p \mathscr B_\Omega ^{\,p}\,[f]$ at $p=0$.
Thus elements of $\bigcup_{p<0}\radial^p(\Omega)$ give the identity at \eqref{loginvar}.

We next turn our attention to  Corollary \ref{expint}. It turns out that there, as well, one has a very extensive class of functions $\mu$ of radial type that yield an equality in the estimates. These functions can be viewed as infinitesimal generators of the expanding class of radial mappings defined above. 
 
 \begin{lemma}\label{exponentialexample}   Let  $\alpha \colon (0,1)\to [0,1]$ be measurable and with the property
 $$\int_0^1 \frac{1-\alpha(t)}{t}  dt= \infty . 
 $$
 Set $$\mu(z) = - \frac{z}{\bar z} \alpha(|z|) \quad \mbox{ for  } |z| < 1,   \qquad \mu(z) = 0\quad \mbox{ for } |z| \geqslant 1.$$ Then
 there is equality in Corollary \ref{expint}, i.e.
 $$  \int_\DD (1-|\mu(z)|) \;e^{\, |\mu(z)| \,}\,  \bigl| \exp({\mathbf{S}\mu(z)}) \bigr| \;dz = \pi
$$
\end{lemma}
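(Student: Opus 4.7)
The strategy is to reduce the identity to an elementary one-variable computation by first evaluating the Beurling transform $\mathbf{S}\mu$ explicitly for this radial-type coefficient. The main task is to exhibit a principal solution of $\bar\partial\psi=\mu$ and read off $\mathbf{S}\mu$ from the identity $\psi_z = 1+\mathbf{S}\psi_{\bar z}$ that was recorded in \eqref{56}. Because $\mu$ is radial of the particular form $-(z/\bar z)\alpha(|z|)\chi_{\mathbb D}$, I would look for $\psi$ of the matching ansatz
\[
\psi(z) \;=\; z \,+\, T(|z|)\,\frac{z}{|z|},
\]
supported where $\mu$ is. Writing $S(r)=r+T(r)$ and using $\psi_{\bar z}=\tfrac{1}{2}(\dot S-S/r)(z/\bar z)$, the requirement $\psi_{\bar z}=\mu$ becomes the linear ODE $(T/r)'=-2\alpha(r)/r$ inside $\mathbb D$ with the matching condition $T(1)=0$ (so that $\psi=z$ outside the unit disk, which is forced by the principal normalization). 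Solving gives $T(r)=r\Psi(r)$ where $\Psi(r):=2\int_r^1\alpha(s)\,ds/s$.

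Next I would compute $\psi_z$ for this explicit $\psi$, using $\partial_z|z|=\bar z/(2|z|)$ and $\Psi'(r)=-2\alpha(r)/r$. The terms coming from differentiating the factor $z$ and the factor $(1+\Psi(|z|))$ combine cleanly, and one finds
\[
\mathbf{S}\mu(z)\;=\;\psi_z(z)-1\;=\;\Psi(|z|)-\alpha(|z|)\qquad (|z|<1).
\]
The crucial observation is that $\mathbf{S}\mu$ is \emph{real-valued}, so $|\exp(\mathbf{S}\mu(z))|=e^{\Psi(|z|)-\alpha(|z|)}$. Substituting into the integrand of \eqref{expint1} produces a neat cancellation of the $e^{\alpha}$ and $e^{-\alpha}$ factors, leaving
\[
\int_{\mathbb D}\bigl(1-|\mu(z)|\bigr)e^{|\mu(z)|}\bigl|\exp\mathbf{S}\mu(z)\bigr|\,dz
\;=\; 2\pi\int_0^1 \bigl(1-\alpha(r)\bigr)\,e^{\Psi(r)}\,r\,dr.
\]

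Finally I would observe that the integrand on the right is exactly the derivative of the explicit antiderivative $G(r):=r^2 e^{\Psi(r)}$, since $G'(r)=2r e^{\Psi(r)}(1-\alpha(r))$ by the product rule together with $\Psi'(r)=-2\alpha(r)/r$. Using $\Psi(1)=0$ one has $G(1)=1$, while the hypothesis $\int_0^1(1-\alpha(t))/t\,dt=\infty$ is precisely what is needed to guarantee
\[
G(0^+)\;=\;\lim_{r\to 0^+}\exp\!\Bigl(-2\int_r^1\frac{1-\alpha(s)}{s}\,ds\Bigr)\;=\;0,
\]
so the fundamental theorem of calculus gives the value $1$ for the radial integral, hence $\pi$ for the original one. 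The main obstacle in this plan is the first step: one must justify that the radial $\psi$ constructed by ODE methods really is the principal Cauchy-transform solution $\psi = z+\mathcal{C}\mu$, so that $\psi_z=1+\mathbf{S}\mu$ applies; this follows from the uniqueness of decaying solutions of $\bar\partial h = \mu$ in $\mathscr W^{1,2}_{\mathrm{loc}}$, together with the verification that $T(r)=r\Psi(r)$ yields $\psi-z\in\mathscr W^{1,2}$ (which is clear since $|\mu|\leq\chi_{\mathbb D}\in L^2$).
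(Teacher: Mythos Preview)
Your proposal is correct and is essentially the same argument as the paper's: in the paper's notation your $\psi-z$ is exactly their auxiliary function $\phi(z)=2z\int_{|z|}^1\alpha(t)/t\,dt$, leading to the identical formula $\mathbf{S}\mu(z)=\Psi(|z|)-\alpha(|z|)$ and the same antiderivative $G(r)=r^2e^{\Psi(r)}$. The only cosmetic difference is that the paper writes down $\phi$ directly and simply notes $\phi\in\mathscr W^{1,2}(\mathbb C)$ with $\phi_{\bar z}=\mu$, rather than deriving it from a radial ODE ansatz.
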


\begin{proof} Let $\phi(z) = 2 z \int_{|z|}^1 \frac{\alpha(t)}{t} \, dt$ for $|z| < 1$ and set $\phi(z) = 0 $ elsewhere. Then we compute that $\phi \in {\mathscr W}^{1,2}(\C)$ with
$$ \phi_{\bar z} \equiv \mu, \qquad \phi_z = {\beur} \mu (z) = 2\int_{|z|}^1 \frac{\alpha(t)}{t} \, dt - \alpha(|z|), \quad |z| < 1.
$$
Thus
\begin{eqnarray*}
&& \hspace{-1.5cm} \int_\DD (1-|\mu(z)|) e^{|\mu(z)| + \re  \,{\beur} \mu (z)} dm \\ && = 
2\pi\int_0^1 \bigl(1-\alpha(t)\bigr) \exp\left[ 2\int_t^1 \frac{\alpha(s)}{s} \, ds   \right] \;t \, dt  = \pi,
\end{eqnarray*}
as we have the identity
$$ \frac{d}{dt} \left( t^2  \exp\left[ 2\int_t^1 \frac{\alpha(s)}{s} \, ds   \right]  \right) = 2t  \bigl(1-\alpha(t)\bigr)  \exp\left[ 2\int_t^1 \frac{\alpha(s)}{s} \, ds   \right]
$$
and our assumption gets rid of the substitution at $t=0$.
\end{proof}
More complicated examples may be obtained by a similar iteration procedure as described above.

Finally, equality in (\ref{Lp1}) obviously implies that necessarily  the distortion function $K(z,f) \equiv K$ in $\Omega$. Hence examples are produced by specific functions $\rho$ in \eqref{rad}, the powers
$$\rho_K(t) \; = \; R^{1-1/K} \;t^{1/K}, \quad r <  t < R,
$$
where $\rho_K(t)$ is linear on $(0,r]$, if $r >0$. For $2 \leqslant p < \frac{2K}{K-1}$ 
let $\,\radial_{K}^p(\Omega)\,$ denote the subclass of $\,\radial^p(\Omega)\,$ consisting of  those piecewise radial mappings where, first, we fill the domain $\Omega$ by discs or annuli up to measure zero, second,  at each construction step  choose $\rho = \rho_K$, and third, choose $r=0$ at any possible subdisk remaining in the limiting packing construction.
This ensures that the limiting function $f$ does not remain linear  in any subdisk,  so that we have $K(z,f) \equiv K$ up to a set of measure zero. Then, as  $|\Omega|<\infty ,$  it is easy to see that convergence $f_\infty=\lim_{n\to\infty} f_n$ takes place in $\mathscr W^{1,p}$ since now $1- p|\mu_n (z)|(1+|\mu_n (z)|)^{-1} \geqslant c_0>0$. Moreover, since there is equality in
Theorem \ref{MainTh} and $K(z,f) \equiv K$, one obtains for any $f\in \,\radial_{K}^p(\Omega)\,$ that
$$
\frac{1}{|\Omega|}\int_{\Omega}  \big | Df(z)\big|^{\,p }\;\textnormal d z
=  \; \frac{2K}{ 2K \;-\; p\,(K-1)}\;.
$$

\subsection{On $\,\mathscr L^p$-bounds for Quasiconformal Mappings in $\mathbb R^n$ }
Let us close with a discussion on higher dimensional analogues of our optimal results in plane.
This leads to new conjectures on the $\,\mathscr L^p$-regularity problem of quasiconformal maps in space. We begin with the following statement in all dimensions.
 \begin{theorem} \label{Bp}The functions $\;\textbf{B}_p^{\,n} : \mathbb R^{n\times n} \rightarrow \mathbb R\,$  defined by
\begin{equation}\label{BpInt}
\textbf{B}_p^{\,n}\,(A) = \,\Big(\; \frac{p}{n}\det A  \; +\; (1-\frac{p}{n})\, \big{|}A\big{|}^n \; \Big)\cdot |A|^{p-n},\qquad  p\geqslant {n}.
\end{equation}
are rank-one concave.
\end{theorem}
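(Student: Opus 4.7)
The plan is to verify rank-one concavity directly, by showing $\varphi''(0)\leqslant 0$ for $\varphi(t):=\textbf{B}_p^n(A+tX)$, for every $A\in\real^{n\times n}$ and every rank-one matrix $X=\xi\otimes\eta$. Two structural ingredients drive the argument. First, since $\textbf{B}_p^n$ depends only on the invariants $|A|$ and $\det A$, it is $O(n)\times O(n)$-invariant under $A\mapsto UAV^T$; via the singular value decomposition we may reduce to $A=\mathrm{diag}(\sigma_1,\ldots,\sigma_n)$ with $\sigma_1\geqslant\cdots\geqslant\sigma_n\geqslant 0$, at the cost of replacing $\xi,\eta$ by the vectors $\alpha:=U^T\xi$, $\beta:=V^T\eta$ in the singular bases. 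Second, the determinant is a null-Lagrangian along rank-one lines: $v(t):=\det(A+tX)$ is affine in $t$, while $u(t):=|A+tX|$ is convex (the operator norm being convex), giving $v''\equiv 0$ and $u''\geqslant 0$.

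Write $\textbf{B}_p^n(A)=\psi(|A|,\det A)$ with $\psi(x,y)=\frac{p}{n}y x^{p-n}+(1-\frac{p}{n})x^p$, restricted to the natural domain $\{|y|\leqslant x^n\}$ forced by Hadamard. One computes $\psi_y=\frac{p}{n}x^{p-n}\geqslant 0$, $\psi_{yy}=0$, $\psi_{xy}=\frac{p(p-n)}{n}x^{p-n-1}\geqslant 0$, and
\[\psi_x=\frac{p(p-n)}{n}\,x^{p-n-1}(y-x^n)\leqslant 0,\qquad \psi_{xx}=\frac{p(p-n)}{n}\,x^{p-n-2}\bigl[(p-n-1)(y-x^n)-nx^n\bigr]\leqslant 0,\]
the last bound following from a short case-split on $p\in[n,n+1]$ versus $p\geqslant n+1$, using $|y|\leqslant x^n$ and $n\geqslant 2$. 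By the chain rule, and noting $v''\equiv 0$, $\psi_{yy}\equiv 0$,
\[\varphi''(t)=\psi_{xx}(u')^2+2\,\psi_{xy}\,u'v'+\psi_x u''.\]
The first and third contributions are manifestly non-positive; the whole difficulty lies in the indefinite cross term $2\psi_{xy}u'v'$.

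To control it, we exploit the fact that $u',u'',v'$ are not independent: they are all determined by the same pair $(\xi,\eta)$ through the spectral data of $A$. Assume for now that the top singular value $\sigma_1$ is simple. Classical Kato--Rellich perturbation theory then yields
\[u'(0)=\alpha_1\beta_1,\qquad v'(0)=\sum_{i}\alpha_i\beta_i\prod_{j\ne i}\sigma_j,\]
together with the explicit second-order expression
\[u''(0)=\frac{\beta_1^{\,2}\sum_{i\ne 1}\alpha_i^{\,2}}{\sigma_1}+\frac{1}{\sigma_1}\sum_{j\ne 1}\frac{(\sigma_1\alpha_1\beta_j+\sigma_j\alpha_j\beta_1)^2}{\sigma_1^{\,2}-\sigma_j^{\,2}}.\]
Substituting these into the chain-rule formula reduces the claim to an algebraic inequality among the $\alpha_i,\beta_i,\sigma_i$. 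The key feature is that expanding the spectral-gap squares $(\sigma_1\alpha_1\beta_j+\sigma_j\alpha_j\beta_1)^2$ produces bilinear terms $2\sigma_1\sigma_j\alpha_1\alpha_j\beta_1\beta_j/(\sigma_1^{\,2}-\sigma_j^{\,2})$ which, paired with the negative coefficient $\psi_x<0$ in the $\psi_x u''$ contribution, furnish exactly the negative bilinear terms needed to absorb the corresponding pieces $\psi_{xy}\,\alpha_1\beta_1\cdot\alpha_j\beta_j\prod_{k\ne j}\sigma_k$ of the cross term. A completion of squares in the pairs $(\alpha_1\beta_j,\alpha_j\beta_1)$, $j\ne 1$, then yields $\varphi''(0)\leqslant 0$.

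The degenerate case, in which $\sigma_1$ coincides with some $\sigma_j$, is dispatched by approximation: rank-one concavity is a closed condition and matrices with simple top singular value are dense. The main obstacle is the algebraic identity in the preceding step; neither the concavity of $\psi$ in its first variable nor the convexity of $u$ is enough by itself, and it is the precise interplay between the coefficients $p/n$ and $p-n$ in $\textbf{B}_p^n$ and the spectral-gap structure of $u''(0)$ that forces the cross term to be absorbed.
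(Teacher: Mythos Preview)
The paper itself does not prove this theorem: immediately after the statement it writes ``For the proof and more information on this topic see \cite{Iw1}'' and moves on. So there is no in-paper argument to compare against; the benchmark is Iwaniec's 2002 \emph{Trans.~Amer.~Math.~Soc.} paper.

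Regarding your sketch: the overall strategy (reduce by isotropy to a diagonal matrix, compute $\varphi''(0)$ via the chain rule using the affineness of $\det$ along rank-one lines, and invoke Kato--Rellich for the top singular value) is sound, and your formulas for $\psi_x,\psi_{xx},\psi_{xy}$, for $u'(0),v'(0)$, and for $u''(0)$ are correct. For $n=2$ one can check that your scheme closes perfectly: the bracketed quantity factors as $-(p-1)\sigma_1(\sigma_1-\sigma_2)(\alpha_1\beta_1)^2-\frac{\sigma_1^3}{\sigma_1+\sigma_2}(\alpha_1\beta_2-\alpha_2\beta_1)^2$, which is the promised completion of squares.

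Two points, however, need attention. First, the invariance you use is only under $SO(n)\times SO(n)$ (or, equivalently, under pairs $(U,V)\in O(n)\times O(n)$ with $\det U\det V=1$), since $\det(UAV^T)=(\det U)(\det V)\det A$; you must therefore allow one signed singular value in the diagonal reduction and track that sign through $v'(0)$ and through the off-diagonal entries of $A^TX+X^TA$ entering $u''(0)$. Second, and more substantially, for $n\geqslant 3$ the phrase ``a completion of squares in the pairs $(\alpha_1\beta_j,\alpha_j\beta_1)$ then yields $\varphi''(0)\leqslant 0$'' is precisely the heart of the matter and is only asserted, not carried out. One has to verify, for each $j\neq 1$, that the cross coefficient $2\sigma_1 P_j\,\alpha_1\beta_1$ coming from $2\psi_{xy}u'v'$ is dominated by the diagonal coefficients produced by $\psi_x u''$, and then that the residual $a_1^2$-term $[-(p-1)\sigma_1^n+(p-n+1)y]\,(\alpha_1\beta_1)^2$ is itself nonpositive. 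The latter is easy (linear in $y$ with $|y|\leqslant\sigma_1^n$), and the former reduces to the elementary inequality $\sigma_1\,|y-\sigma_1^{\,n-2}\sigma_j^2|\leqslant(\sigma_1^n-y)\,\sigma_j$, which does hold for ordered singular values; but you should actually write this out rather than leave it as a claim. As it stands, the proposal is a credible outline whose decisive algebraic step is not yet on the page.
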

For the proof  and more information on this topic see \cite{Iw1}. Of course, one may be tempted to show that $\textbf B_p^{n} \,$ is quasiconcave, but that is beyond our reach in such a generality. 
However, we state here  potential consequences of the conceivable quasiconcavity of the above $n$-dimensional Burkholder functional for quasiconformal maps, i.e. analogues of our optimal results in plane.

\begin{conjecture}\label{sharp}
 Suppose $\,f:\mathbb B \rightarrow \mathbb B\,$ is a  $\,K\,$-quasiconformal mapping of the unit ball onto itself that is equal to the identity on $\,\partial \mathbb B\,$.
Then,
\begin{equation}
{\Xint-}_{\!\!\mathbb B}\left( n - p \; +\;\frac{p}{K(x)} \right)\;\left| D\!f(x)\right| ^p \;\textrm{d} x \leqslant n\;,\;\;\;\; for \;\;n \leqslant p \leqslant  \frac{nK}{K-1}
\end{equation}
Therefore,
\begin{equation}
{\Xint-}_{\!\!\mathbb B}\left| D\!f(x)\right| ^p \;\textrm{d}x  \;\;\leqslant \;\; \frac{nK}{nK - p(K-1)}
\end{equation}
There are also sharp estimates with the critical upper exponent  $\;p = \frac{n K}{ K-1}\,$,
\begin{equation}
{\Xint-}_{\!\!\mathbb B}\left( \frac{1}{K(x)}\; -\;\frac{1}{K} \right)\;\big| D\!f(x)\big| ^{\frac{nK}{K-1}} \;\textrm{d}x \;\;\leqslant \;\; 1 - \frac{1}{K}
\end{equation}
Hence
\begin{equation}
\int_\mathbb E \big|D\!f(x)\big|^{\frac{nK}{K-1}} \;\textrm{d} x  \leqslant |\mathbb B| \,,\;\;\;\;\textrm{where}\;\;\;\;  \mathbb E = \{ x\in \mathbb B\; ;\;\;  K(x) = 1\;\}
\end{equation}
All the above inequalities are sharp; $\,n$-dimensional radial and power mappings produce examples for the equalities.
\end{conjecture}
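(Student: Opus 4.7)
My plan is to reduce all four displayed inequalities in Conjecture \ref{sharp} to a single quasiconcavity assertion, and then to identify the principal obstruction to establishing that assertion in dimensions $n \geqslant 3$. The algebraic reduction is immediate from the pointwise identity
$$
\textbf{B}_p^{\,n}(A) \;=\; \frac{|A|^p}{n}\left(n - p + \frac{p}{K(A)}\right), \qquad K(A) = \frac{|A|^n}{\det A},
$$
so that the first inequality of the conjecture is exactly the normalized assertion $\dashint_{\mathbb B} \textbf{B}_p^{\,n}(Df)\,dx \leqslant 1 = \textbf{B}_p^{\,n}(\textnormal{Id})$. The second inequality then follows by bounding $n - p + p/K(x) \geqslant (nK - p(K-1))/K$ pointwise using $K(x,f) \leqslant K$. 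The critical borderline estimate results from substituting $p = nK/(K-1)$ into the same identity, where $n - p + p/K(x) = \tfrac{nK}{K-1}(1/K(x) - 1/K)$, and the fourth inequality comes by restricting the third to the set $\mathbb E = \{K(x,f) = 1\}$, where the weight equals $1 - 1/K$. Thus every assertion in the conjecture collapses to the single claim
$$
\int_{\mathbb B} \textbf{B}_p^{\,n}(Df)\, dx \;\leqslant\; \textbf{B}_p^{\,n}(\textnormal{Id})\,|\mathbb B|, \qquad n \leqslant p \leqslant \frac{nK}{K-1},
$$
for every $K$-quasiconformal $f:\mathbb B\to \mathbb B$ that is the identity on $\partial \mathbb B$.

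Modelled directly on the proof of Theorem \ref{MainTh2}, the strategy for this quasiconcavity would be threefold. First, embed $f$ in an analytic one-parameter family $F^\lambda$ of $K_\lambda$-quasiconformal maps with $F^0 = \textnormal{Id}$ and $F^{\lambda_\circ} = f$, where the distortion is tuned so that the weight in the $n$-dimensional analogue of \eqref{eq:tau} matches the $\textbf{B}_p^{\,n}$ integrand. Second, build from the $\partial$-derivatives a non-vanishing analytic family $\Phi_\lambda$ whose $\mathscr L^\infty$ norm at $\lambda = 0$ is $1$ (trivial) and whose $\mathscr L^n$ norm is controlled uniformly by the area inequality $\int_{\mathbb B} J(x,F^\lambda)\,dx \leqslant |\mathbb B|$, itself a consequence of the change-of-variable formula and the identity boundary values of the extended map. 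Third, apply an interpolation principle in the spirit of Lemma \ref{interpolation} to extract the intermediate $\mathscr L^p$ bound with the correct $\textbf{B}_p^{\,n}$ weight, exactly as in the two-dimensional passage from $(p_0, p_1) = (\infty, 2)$ to the intermediate exponent $p = 1 + 1/\lambda_\circ$, now replaced by $(\infty, n)$ and $p = n(1 + 1/\lambda_\circ)$.

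The fundamental obstruction sits at the first step. In dimension two the linear Beltrami equation $f_{\bar z} = \mu f_z$ permits an honest holomorphic parametrization $\mu \mapsto F^\mu$ via the Neumann series for $I - \mu \mathbf S$, thereby supplying the analytic, non-vanishing family that Lemma \ref{interpolation} requires. In dimensions $n \geqslant 3$ the dilatation is a nonlinear tensorial object; there is no Stoilow-type factorization, no measurable Riemann mapping theorem, and no holomorphic motion principle tying a scalar parameter $\lambda$ to quasiconformal deformations with prescribed distortion. Consequently there is no a priori route to an analytic family of $K_\lambda$-quasiconformal maps interpolating between $\textnormal{Id}$ and $f$, and the interpolation engine of Section \ref{se:interpolation} cannot be invoked directly. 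Circumventing this would require either a purely variational rank-one-to-quasi-convex upgrade of Theorem \ref{Bp} for this particular functional (a higher-dimensional Morrey-type problem for $\textbf{B}_p^{\,n}$), or a martingale/Bellman-function proof in the spirit of Burkholder's original work, or a symmetrization-comparison argument directly against the radial extremals.

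Sharpness itself is routine once the quasiconcavity is granted. The radial power map $f(x) = x|x|^{1/K-1}$ on $\mathbb B$ has $K(x,f)\equiv K$ everywhere, and the rank-one-directional computation underlying \eqref{eq:radialquasiconcavity} extends verbatim to $\mathbb R^n$ (since $Dg$ decomposes as $\tfrac{\rho(|x|)}{|x|}\textnormal{Id}$ plus a rank-one correction, along which $\textbf{B}_p^{\,n}$ is linear under the expanding assumption). This shows that the power map saturates the quasiconcavity bound and hence each of the four displayed inequalities; the piecewise radial construction of Section \ref{se:radial} then yields the promised richer family of extremals. The main and essentially the only real difficulty thus concentrates on proving the $n$-dimensional quasiconcavity, which is the principal open problem suggested by this paper.
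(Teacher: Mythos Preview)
The statement you are addressing is a \emph{conjecture}: the paper offers no proof, only the remark that these inequalities are ``potential consequences of the conceivable quasiconcavity of the above $n$-dimensional Burkholder functional.'' Your reduction of all four displayed estimates to the single inequality $\int_{\mathbb B}\textbf{B}_p^{\,n}(Df)\,dx\leqslant \textbf{B}_p^{\,n}(\textnormal{Id})\,|\mathbb B|$ is correct and is precisely the implicit logic the paper has in mind; likewise your observation that the sharpness part (equality for radial power maps) is a direct rank-one computation that does go through in $\mathbb R^n$, independently of the open quasiconcavity question.

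Where your write-up goes slightly beyond the paper is in sketching a would-be higher-dimensional analogue of the holomorphic-deformation argument of Section~\ref{se:proof} and then explaining why it fails. This diagnosis is accurate: the planar proof rests on the linear Beltrami equation, the Neumann series for $(I-\mu\mathbf S)^{-1}$, and the resulting honest analytic dependence on a scalar parameter, none of which survive in $\mathbb R^n$ for $n\geqslant 3$. The paper does not spell this out, but it is the evident reason the statement is recorded as a conjecture rather than a theorem. So your proposal is not a proof and could not be one with present tools, but it correctly frames the problem, matches the paper's own presentation, and pinpoints the genuine obstruction.
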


\bigskip

\emph{Acknowledgements.}
{Astala was supported by  Academy of Finland grant 11134757 and by the EU-network CODY. Iwaniec was supported by the NSF grant DMS-0800416 and Academy of Finland grant 1128331. Prause  was supported by project 1134757 of the Academy of Finland and by the Swiss NSF. Each of the authors was supported by the Academy of Finland CoE in Analysis and Dynamics research, grant 1118634.

Part of the research took place when the first author was visiting UAM and ICMAT at Madrid and MSRI at Berkeley. K.A thanks both institutes for the inspiring atmosphere and warm hospitality.}

\bibliographystyle{amsplain}

\end{document}